\documentclass[11pt]{amsart}

\usepackage{amsthm,amsmath,amssymb}
\usepackage[T1]{fontenc}
\usepackage[utf8]{inputenc}
\usepackage[english]{babel}
\usepackage{indentfirst}
\usepackage[section]{placeins}
\usepackage{graphicx,comment}
\graphicspath{{images/}}
\usepackage[dvipsnames]{xcolor}
\usepackage[pagebackref,breaklinks,colorlinks=true,linkcolor=MidnightBlue,citecolor=MidnightBlue]{hyperref}
\usepackage[shortlabels]{enumitem}
\usepackage{tikz}

\usepackage{mathFL,mathMJ}

\newcommand{\normhone}[1]{\norm{#1}_{\dot{H}^1}}
\newcommand{\normhmone}[1]{\norm{#1}_{\dot{H}^{-1}}}
\newcommand{\normH}[1]{\norm{#1}_{\calH}}

\newcommand{\babs}[1]{\big\lvert #1 \big\rvert}

\title{A fast approach to optimal transport:\\
The back-and-forth method}

\date{\today}

\author{Matt Jacobs}
\address{UCLA, Los Angeles, CA, USA}
\email{majaco@math.ucla.edu}

\author{Flavien Léger}
\address{UCLA, Los Angeles, CA, USA}
\email{flavienleger@nyu.edu}

\thanks{M.J. and F.L. are supported by  AFOSR MURI FA9550-18-1-0502 and ONR N00014-12-1-0838. M.J. is also supported by NSF DMS-1118971, DARPA FA8750-18-2-0066, and ONR N00014-18-1-2527}

\keywords{Optimal transport; Wasserstein distance}

\begin{document}
\begin{abstract}
We present an iterative method to efficiently solve the optimal transportation problem for a class of strictly convex costs which includes quadratic and $p$-power costs. Given two probability measures supported on a discrete grid with $n$ points, we compute the optimal map using $O(n)$ storage space and $O(n\log(n))$ operations per iteration, with an approximately exponential convergence rate.  Our approach allows us to solve optimal transportation problems on spatial grids as large as $4096\times 4096$ and $384\times 384\times 384$ in a matter of minutes. 
\end{abstract}
\maketitle

\section{Introduction}\label{sec:intro}

The optimal transportation problem was first introduced by Monge in 1781, to find the most cost-efficient way to transport mass from a set of sources to a set of sinks.  The theory was modernized and revolutionized by Kantorovich in 1942, who found a key link between optimal transport and linear programming.  In recent years, there has been an explosion of interest in optimal transport thanks in part to the discovery of deep connections between the quadratic-cost optimal transport problem and a diverse class of partial differential equations (PDEs) arising in statistical mechanics and fluid mechanics  (\cite{brenier_polar, brenier_euler_assignment, bb00, otto_porous_media, jko98} to name just a few of the most prominent results). In addition, optimal transport has become popular in data science (particularly machine learning and image processing), where it provides a very natural way to compare and interpolate probability distributions~\cite{McCann_interpolation,  HZTA,WassersteinGAN}.

In this work, we are interested in computing the optimal transport problem for large-scale applications that arise in image processing and in numerical methods for solving PDEs (in both two and three dimensions).  For these applications, one needs to accurately compute the optimal transport map on enormous computational domains (millions of grid points or pixels).   Up to now, computing the optimal map has been a notoriously difficult task. To the best of our knowledge, all previously known methods for solving the optimal transport problem either do not scale linearly with respect to the problem size \cite{bertsekas_transportation_auctions, bb00,Prins2015,LindseyRubinstein}, cannot provide an accurate computation of the optimal map \cite{cuturi_sinkhorn}, or are only applicable to a limited class of probability densities \cite{benamou_froese_oberman}.  Semi-discrete optimal transport methods~\cite{Levy3D,Merigot2011} have been applied successfully to a wide range of optimal transport problems~\cite{GallouetMerigot,MerigotMirebeau}, but require expensive geometric processing overhead. 

Our goal in this paper is to remedy the current situation and provide a simple, efficient, and accurate algorithm for computing optimal transport maps. To that end, we introduce a new method for solving optimal transport problems: the back-and-forth method.    Given two probability densities $\mu$ and $\nu$ discretized on a grid with $n$ points and a strictly convex cost function, our approach computes the optimal map using $O(n)$ storage space and $O(n\log(n))$ computation time per iteration.  The number of iterations required for $\epsilon$ accuracy grows like $O\Big(\max(\norm{\mu}_{\infty}, \norm{\nu}_{\infty})\log(\frac{1}{\epsilon})\Big)$ in all tested experiments.  As a result, the method converges extremely rapidly.  Notably, in contrast to many other approaches, the back-and-forth method does not require positive lower bounds on the probability densities.  Thus, we can solve optimal transport problems with densities that vanish on large portions of the domain.  In total, the efficiency and flexibility of the back-and-forth method allows us to solve optimal transport problems on grids of size $4096\times 4096$ and $384\times 384\times 384$ in a matter of minutes on a personal computer.  Furthermore, the method could be accelerated even further by applying parallelization or other standard scientific computing techniques such as multigrid acceleration.

\subsection{Overall approach}

The back-and-forth method is based on solving the Kantorovich dual formulation of optimal transport.  To introduce this formulation, suppose we are given two probability measures $\mu$ and $\nu$ and a cost function $c(x,y)$, which measures the cost to move a unit of mass from location $x$ to location $y$ (for now we shall be deliberately vague about underlying spaces). 
The optimal cost to transport $\mu$ to $\nu$ is given by the value of the Kantorovich dual problem:
\begin{equation*}
\sup_{\phi,\psi} \int \phi\,d\nu + \int \psi\,d\mu,
\end{equation*}
where the Kantorovich potentials $\phi$ and $\psi$ are two scalar functions constrained by the inequality
\begin{equation}\label{eq:c_constraint}
\phi(y) + \psi(x) \le c(x,y) .
\end{equation}
In addition to computing the optimal cost, the dual problem also encodes information about the optimal map.
If an optimal map from $\mu$ to $\nu$ exists (or vice-versa), it can be recovered from the maximizers $\phi_*, \psi_*$ of the dual problem.  Indeed, in this case, the pair $(x,y)$ is in the graph of the map if and only if $\phi_*(y)+\psi_*(x)=c(x,y).$  Thus, to compute optimal maps it is enough to solve the dual problem.  

In what follows, we will consider two equivalent unconstrained formulations of the dual problem.  These formulations are based upon the observation that if $\phi_*$ and $\psi_*$ are the maximizers of the dual problem, then one must have the relations
\[
\phi_*(y) = \psi_*^c(y) := \inf_{x} c(x,y) - \psi_*(x),
\]
and 
\[\psi_*(x) = \phi^c_*(x) := \inf_{y} c(x,y)-\phi_*(y).\]
Here we make use of an operator $\phi\to\phi^c$ (resp. $\psi\to\psi^c$) called the $c$-transform, which plays an important role in optimal transport and in our proposed method.  Note that if one views the dual problem as a linear programming problem, then the above two equations are precisely the complementary slackness condition.

Now we see that there are two equivalent ways to remove the constraint (\ref{eq:c_constraint}), we can either replace $\phi$ by $\psi^c$ or we can replace $\psi$ by $\phi^c$. 
This leads to the twin functionals
\begin{equation*}
J(\phi) = \int \phi\,d\nu + \int \phi^c\,d\mu,
\end{equation*}
and 
\begin{equation*}
I(\psi) = \int\psi^c\,d\nu + \int \psi\,d\mu ,
\end{equation*}
which encode exactly the same problem, just posed in different spaces.  $J$ formulates the problem in ``$\phi$-space'' and $I$ formulates the problem in ``$\psi$-space''.  Here one can draw an analogy to the Fourier transform: indeed, the reader is probably already quite familiar with functionals that can be equivalently expressed in either physical space or Fourier space.  

The back-and-forth method solves the Kantorovich dual problem by hopping back-and-forth between gradient ascent updates on $J$ in $\phi$-space and gradient ascent updates on $I$ in $\psi$-space (hence the name). Gradient are taken with respect to the $\dot{H}^1$ metric (see~\eqref{eq:defH1dot}). In between gradient steps, information in one space is propagated back to the other space by taking a $c$-transform (c.f. Algorithm~\ref{algo:main}).  

The advantage of the back-and-forth approach is that certain features of the optimal solution pair $(\phi_*, \psi_*)$ may be easier to build in one space compared to the other.  For example, the Hessian of $\psi=\phi^c$ is very closely related to the inverse of the Hessian of $\phi$.  Thus, large Hessian eigenvalues in one space correspond to small Hessian eigenvalues in the other space, and smaller features can be built in fewer gradient steps.  By hopping back-and-forth between the two spaces, we get the best of both worlds --- there is always an opportunity to build features in the space where they are smaller.  As a result, the back-and-forth method converges far more rapidly than vanilla gradient ascent methods that operate only on $\phi$-space or only on $\psi$-space.  
Indeed, on certain examples, our method produces a 10,000-fold reduction in error in just 2 to 4 additional iterations (c.f. Tables \ref{table:2d_2_discs}, \ref{table:3d_2_balls}). 

In addition, let us highlight that the choice of gradient ascent steps in the $\dot{H}^1$ metric appears to be crucial for maintaining the stability of the algorithm. Indeed, it seems that it is not possible to guarantee an increase in the value of the dual problem if the gradient ascent steps are taken in any Hilbert space weaker than $\dot{H}^1$ (see Section \ref{ssec:gd} and Proposition \ref{prop:ascent-descent} for more details).

Turning to the computational efficiency of the back-and-forth method, we see that the scheme amounts to performing $\dot{H}^1$ gradient ascent iterations alternatively on $J$ and $I$ (i.e. in $\phi$-space and $\psi$-space), and computing $c$-transforms. Crucially, the derivative of $J$ (and, by symmetry, the derivative of $I$) takes a simple form~\cite{gangbo_habilitation} which can be efficiently computed. 
Furthermore, for a large class of costs, the $c$-transform can be computed extremely efficiently.   On a discrete grid with $n$ points, an exact $c$-transform can be computed in  $O(n\log(n))$ operations \cite{corrias_legendre, lucet_legendre} (see Section \ref{ssec:numeric_c_transform} for more details).  

Finally, let us note that a gradient ascent method on the Kantorovich dual problem was previously considered in \cite{cwbv_mk_gd}. In their paper, the authors took $L^2$ gradient ascent steps on only one form of the dual problem.  Furthermore, the authors tried to avoid computing $c$-transforms, and thus were forced to use an inaccurate estimation of the true gradient.  Unsurprisingly, the authors note that their scheme appears to be unstable.  Indeed, we believe that the ideas behind the back-and-forth method are necessary to obtain a robust, efficient, and accurate gradient ascent type method for solving the Kantorovich dual problem.

\subsection{Future work and paper outline}

While the present work is focused on the optimal transport problem, we anticipate that the back-and-forth method will prove useful for many other problems.  For instance, the back-and-forth method can be readily adapted to compute Wasserstein gradient flows.  This opens the door to large-scale simulations of a wide class of important and interesting PDEs, \cite{otto_porous_media, jko98,brenier_euler_assignment,jacobs_kim_meszaros} to name just a few. In addition, the method may prove useful to solve computational problems arising from the burgeoning area of mean field games \cite{Lasry_Lions_MFG,HMC_MFG}.  We look forward to exploring these applications in future work.      

The rest of the paper is organized as follows.  In Section 2, we recall the necessary background information for optimal transport and gradient-based optimization schemes. In Section 3, we introduce the back-and-forth method and provide arguments for its stability and efficiency.  In Section 4, we discuss the numerical implementation of the algorithm, and conduct various experiments to demonstrate its performance.

\section{Background}
\label{sec:background}

\subsection{Optimal transport and the \texorpdfstring{$c$}{c}-transform}
\label{sec:background:ot}

Let $\Omega$ be a convex and compact subset of $\Rd$. A \emph{cost} on $\Om$ is a continuous function $c\colon\Om\times\Om\to\R$. In the theory of optimal transport fairly general costs can be considered, however in the present work we will focus on the case
\[
c(x,y) = h(y-x),
\]
for a strictly convex and even function $h\colon\Rd\to\R$. Given two probability measures $\mu$ and $\nu$ supported in $\Om$, the Monge formulation of the optimal transport problem is defined by
\begin{equation*}
C(\mu,\nu) = \inf_{T} \int_{\Om} c\big(x, T(x)\big)\,d\mu(x),
\end{equation*}
where the infimum runs over maps $T\colon\Om\to\Om$ which transport $\mu$ to $\nu$, i.e. such that $T_{\#}\mu = \nu$. We recall that the pushforward measure $T_{\#}\mu$ is defined by $T_{\#}\mu(A)=\mu(T^{-1}(A))$ for any measurable subset $A\subset\Om$.   One can also characterize the pushforward by defining the integral of the pushforward measure against continuous test functions $f\colon\Omega\to\R$:
$$  \int_{\Omega} f(y) \,d(T_{\#}\mu)(y)=\int_{\Omega} f(T(x))\,d\mu(x).$$
This second formulation is extremely useful for studying pushforwards in the context of optimal transport. 

Throughout the paper, we will concentrate on the special case where both probability measures $\mu$ and $\nu$ are absolutely continuous with respect to the Lebesgue measure. As a result, we will frequently conflate a measure and its density function. Under this assumption, there exists a unique optimal map $T_*$, which pushes $\mu$ to $\nu$, and its inverse $T^{-1}_*$ is the optimal map that pushes $\nu$ to $\mu$ ~\cite{brenier_polar, gangbo_habilitation}.   Furthermore, one can find the optimal map by solving the Kantorovich dual formulation of the optimal transport problem. 

The dual formulation can be derived by introducing Lagrange multipliers for the pushforward constraint.  Indeed, the pushforward constraint $T_{\#}\mu=\nu$ holds if and only if 
$$\int_{\Omega} \phi\big(T(x)\big)\,d\mu(x)=\int_{\Omega} \phi(y)\,d\nu(y)$$
for every continuous function $\phi$. Therefore,
$$\inf_{T_{\#}\mu=\nu} \int_{\Om} c\big(x, T(x)\big)\,d\mu(x)=\inf_{T}\sup_{\phi} \int_{\Om} c\big(x, T(x)\big)\,d\mu(x) - \phi\big(T(x)\big)\,d\mu(x) + \int_{\Om}\phi(y)\,d\nu(y).$$
Under the assumption that $\mu$ is absolutely continuous, the interchange of infimum and supremum is valid \cite{VillaniBook1, santambrogio_otam}. If we then group the terms involving $T$ we arrive at 
$$ \sup_{\phi} \inf_{T} \int_{\Om} \Big(c\big(x, T(x)\big)-\phi\big(T(x)\big)\Big)\,d\mu(x) +\int_{\Om}\phi(y)\,d\nu(y).$$ 
Now it is clear that the optimal choice of $T$ at some point $x_0$ is completely decoupled from the choice at any other point $x$.  Therefore, one can move the infimum inside of the integral, and it becomes clear that the operation $\inf_{T(x)} c\big(x, T(x)\big)-\vp\big(T(x)\big) $ plays an important role in the dual problem.  This operation is known as the $c$-transform, and it is at the heart of optimal transport. The $c$-transform maps a function $\phi$ to another function $\phi^c$ and it can be seen as a generalization of the Legendre transform (convex conjugation) from convex analysis.
\begin{definition}
Given a continuous function $\phi\colon\Om\to\R$, we define its $c$-transform $\phi^c\colon\Om\to\R$ by
\[
\phi^c(x) = \inf_{y\in\Om} c(x,y) - \phi(y) .
\]
Additionally, we say that $\phi$ is $c$-concave if there exists a continuous function $\psi\colon\Om\to\R$ such that $\phi = \psi^c$ and we say that a pair of functions $(\phi, \psi)$ is $c$-conjugate if $\phi=\psi^c$ and $\psi=\phi^c$. 
\end{definition}

We are now ready to introduce the Kantorovich dual functional 
\begin{equation*}
J(\phi) = \int_{\Om} \phi\,d\nu + \int_{\Om} \phi^c\,d\mu.
\end{equation*}
Our above discussion then shows that
\[
C(\mu,\nu) = \sup_{\phi} J(\phi) ,
\]
where the supremum runs over continuous functions $\phi\colon \Om\to\R$.  Note that in this section we will just focus on one formulation of the dual problem.  It is easy to see that any properties satisfied by $J$ must also be satisfied by the alternative formulation, which we previously denoted in the introduction as $I$.

The following lemma encapsulates the most important properties of the $c$-transform (from the perspective of optimal transport).
\begin{lemma}[\cite{GangboMcCann,gangbo_habilitation}]
\label{lem:c_transform_properties}
Let $\phi :\Omega\to\R$ be a continuous function.  The $c$-transform satisfies the following properties:
\begin{enumerate}[(i)]
\item $\phi(x)\leq \phi^{cc}(x)$ for every $x\in\Omega$, and $\phi^{cc}=\phi$ if and only if $\phi$ is $c$-concave.   As a result, $\phi^{ccc}=\phi^{c}$ for any continuous function $\phi$. 

\item If $\phi$ is $c$-concave, the minimization problem $\inf_{y\in\Omega} c(x,y)-\phi(y)$ has a unique minimizer $T_{\phi}(x)$ for almost every $x$. Furthermore, we have the explicit formula: 
$$T_{\phi}(x)=x-(\nabla h)^{-1}(\nabla \phi^c(x)),$$
where we recall that $c(x,y)=h(y-x)$.
\item \label{lem:c_transform_properties:derivative} If   $\phi$ is $c$-concave  and $u$ is a continuous function on $\Om$, then
$$\lim_{\epsilon\to0} \frac{(\phi+\epsilon u)^c(x)-\phi^c(x)}{\epsilon}=-u(T_{\phi}(x))$$
for almost every $x\in\Omega$.

\end{enumerate}  
\end{lemma}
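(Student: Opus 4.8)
The plan is to treat the three parts in order, since part (iii) rests on the uniqueness from part (ii), whereas part (i) is self-contained. \textbf{Part (i).} I would first record that the $c$-transform is \emph{order-reversing}: if $\psi_1\le\psi_2$ pointwise then $\psi_1^c\ge\psi_2^c$, which is immediate from the definition since enlarging $\psi$ only decreases each $c(x,y)-\psi(y)$. The inequality $\phi\le\phi^{cc}$ then follows by unwinding definitions: for all $x,y$ one has $\phi^c(x)\le c(x,y)-\phi(y)$, hence $\phi(y)\le c(x,y)-\phi^c(x)$, and taking the infimum over $x$ (using that $c$ is symmetric because $h$ is even) gives $\phi(y)\le\phi^{cc}(y)$. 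For the equivalence, if $\phi^{cc}=\phi$ then $\phi=(\phi^c)^c$ exhibits $\phi$ as a $c$-transform, so $\phi$ is $c$-concave; conversely if $\phi=\psi^c$, applying the inequality to $\psi$ gives $\psi\le\psi^{cc}=\phi^c$, and order-reversal yields $\phi=\psi^c\ge\phi^{cc}$, which together with $\phi\le\phi^{cc}$ forces equality. Then $\phi^{ccc}=\phi^c$ is just the equivalence applied to the always-$c$-concave function $\phi^c$.

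\textbf{Part (ii).} The point is that $\phi^c(x)=\inf_y h(y-x)-\phi(y)$ is an infimum of the smooth functions $x\mapsto h(y-x)-\phi(y)$, whose $x$-Hessians $D^2h(y-x)$ are uniformly bounded as $y-x$ ranges over the compact set $\{y-x:x,y\in\Omega\}$. Hence $\phi^c$ is \emph{semiconcave} and therefore differentiable almost everywhere. At a differentiability point $x$, if $y^\ast$ is any minimizer, then the inequality $\phi^c(x')\le h(y^\ast-x')-\phi(y^\ast)$, an equality at $x'=x$, shows that the smooth right-hand side touches $\phi^c$ from above at $x$, so its gradient $-\nabla h(y^\ast-x)$ must equal $\nabla\phi^c(x)$. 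Strict convexity makes $\nabla h$ injective, so $y^\ast-x=(\nabla h)^{-1}(-\nabla\phi^c(x))$ is uniquely determined, giving uniqueness of $T_\phi(x)$; and since $h$ is even, $\nabla h$ and hence $(\nabla h)^{-1}$ are odd, converting this into the stated formula $T_\phi(x)=x-(\nabla h)^{-1}(\nabla\phi^c(x))$.

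\textbf{Part (iii).} I would establish matching one-sided bounds and squeeze. Write $y_0=T_\phi(x)$ for the almost-everywhere unique minimizer from part (ii), and let $y_\epsilon$ minimize $\inf_y h(y-x)-\phi(y)-\epsilon u(y)$. Testing the perturbed problem with $y_0$ gives $(\phi+\epsilon u)^c(x)\le\phi^c(x)-\epsilon u(y_0)$, while testing the unperturbed problem with $y_\epsilon$ gives $(\phi+\epsilon u)^c(x)\ge\phi^c(x)-\epsilon u(y_\epsilon)$; for $\epsilon>0$ these bracket the difference quotient between $-u(y_\epsilon)$ and $-u(y_0)$ (the inequalities flip, but still bracket, for $\epsilon<0$). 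It then remains to show $y_\epsilon\to y_0$: as $\epsilon\to0$ the objectives converge uniformly on the compact set $\Omega$, so every subsequential limit of $y_\epsilon$ minimizes the unperturbed problem and hence equals $y_0$ by uniqueness, forcing convergence of the whole family. Continuity of $u$ gives $u(y_\epsilon)\to u(y_0)$, and the squeeze produces the limit $-u(T_\phi(x))$.

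\textbf{Main obstacle.} The crux is part (ii): the almost-everywhere differentiability of $\phi^c$, for which I rely on semiconcavity and thus on enough regularity of $h$ (e.g.\ $h\in C^2$ on the relevant compact set), together with the link between differentiability points and uniqueness of the minimizer. Once this is in place, part (i) is purely order-theoretic and part (iii) is a soft squeeze argument resting entirely on that uniqueness.
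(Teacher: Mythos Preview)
The paper does not actually prove this lemma: it is stated with citations to \cite{GangboMcCann,gangbo_habilitation} and used as background, with no argument given in the text or the appendix. So there is no paper proof to compare against.

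That said, your proposal is the standard route to these facts and is essentially correct. Part (i) is clean. In part (ii) your semiconcavity argument for $\phi^c$ is right, and you correctly flag the regularity needed on $h$: the paper only assumes $h$ strictly convex and even, but the formula already presupposes that $\nabla h$ exists and is invertible, so some extra smoothness (e.g.\ $h\in C^{1,1}_{\mathrm{loc}}$, or $C^2$ as you write) is implicitly in play. In part (iii), your squeeze is exactly the classical argument; one small point worth making explicit is that the minimizers $y_\epsilon$ exist by continuity of the objective and compactness of $\Omega$, which you use but do not state.
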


With these properties in hand, one can establish one of the most fundamental results in optimal transport: 

\begin{theorem}[\cite{brenier_polar, GangboMcCann, gangbo_habilitation}]
\label{thm:dual_problem}
The dual problem 
$$\sup_{\phi} \int_{\Om} \phi\,d\nu + \int_{\Om} \phi^c\,d\mu =: J(\phi)$$
satisfies the following properties:
\begin{enumerate}[(i)]
\item $J$ is concave with respect to $\phi$.
\item  If $\mu$ is absolutely continuous with respect to the Lebesgue measure, then $J$ is maximized by a $c$-concave function $\phi_*$ and $T_{\phi_*}$ is the unique optimal map which pushes $\mu$ to $\nu$.
\item If $\nu$ is also absolutely continuous, then $T_{\phi_*}$ is invertible almost everywhere, and $T_{\phi_*}^{-1}$ is the optimal map which pushes $\nu$ to $\mu$.
\end{enumerate}
\end{theorem}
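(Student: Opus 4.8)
The plan is to treat the three items in turn, relying on Lemma~\ref{lem:c_transform_properties} and on the identity $\sup_\phi J=C(\mu,\nu)$ established in the preceding discussion. Item (i) is immediate: $\phi\mapsto\int_\Om\phi\,d\nu$ is linear, while for each fixed $x$ the value $\phi^c(x)=\inf_{y\in\Om}\big(c(x,y)-\phi(y)\big)$ is an infimum of maps that are affine in $\phi$, hence concave in $\phi$; integrating against $\mu$ preserves concavity, so $J$ is concave as a sum of two concave functionals.

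For (ii), I would first prove that the supremum is attained. By Lemma~\ref{lem:c_transform_properties}(i) we have $\phi^{cc}\ge\phi$ and $(\phi^{cc})^c=\phi^c$, so $J(\phi^{cc})\ge J(\phi)$ and it suffices to maximize over $c$-concave functions. Since $c(x,y)=h(y-x)$ is uniformly continuous on the compact set $\Om\times\Om$, any $c$-concave $\phi=\psi^c$ is an infimum of the functions $y\mapsto h(y-x)-\psi(x)$, all sharing the modulus of continuity of $h$; hence the $c$-concave functions are equicontinuous. Because $(\phi+t)^c=\phi^c-t$ gives $J(\phi+t)=J(\phi)$, I normalize each term of a maximizing sequence by an additive constant so that it is also uniformly bounded. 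Arzelà--Ascoli then yields a uniformly convergent subsequence, and since $\norm{\phi^c-\tilde\phi^c}_\infty\le\norm{\phi-\tilde\phi}_\infty$ the functional $J$ is continuous under uniform convergence; the limit $\phi_*$ is therefore a $c$-concave maximizer.

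Next I would extract the map from first-order optimality. As $J$ is concave and $\phi_*$ is a global maximizer, its Gateaux derivative vanishes in every direction; by Lemma~\ref{lem:c_transform_properties}(iii), for each continuous $u$,
\[
0=\frac{d}{d\epsilon}\Big|_{\epsilon=0}J(\phi_*+\epsilon u)=\int_\Om u\,d\nu-\int_\Om u\big(T_{\phi_*}(x)\big)\,d\mu(x),
\]
which is precisely the statement $(T_{\phi_*})_\#\mu=\nu$. For optimality, substituting $\phi_*^c(x)=c\big(x,T_{\phi_*}(x)\big)-\phi_*\big(T_{\phi_*}(x)\big)$ into $J(\phi_*)$ and using the pushforward identity to cancel the potential terms gives $J(\phi_*)=\int_\Om c\big(x,T_{\phi_*}(x)\big)\,d\mu$; since $J(\phi_*)=\sup_\phi J=C(\mu,\nu)$ and $T_{\phi_*}$ is admissible, it attains the infimum and is optimal. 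Uniqueness is where strict convexity of $h$ enters: through the formula $T_{\phi_*}(x)=x-(\nabla h)^{-1}\big(\nabla\phi_*^c(x)\big)$, strict convexity makes $\nabla h$ invertible and forces every optimal plan to concentrate on the graph of a single map, so that (together with absolute continuity of $\mu$, which makes $\nabla\phi_*^c$ defined $\mu$-a.e.) the optimal map is unique.

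For (iii), I would run the construction symmetrically: applying (ii) with $\mu$ and $\nu$ interchanged produces an optimal map $S$ from $\nu$ to $\mu$. Both $T_{\phi_*}$ and $S$ induce the same unique optimal plan $\gamma$, so $\gamma=(\id,T_{\phi_*})_\#\mu=(S,\id)_\#\nu$; comparing the two expressions shows that for $\gamma$-a.e.\ $(x,y)$ one has $y=T_{\phi_*}(x)$ and $x=S(y)$ simultaneously, whence $S\circ T_{\phi_*}=\id$ $\mu$-a.e.\ and $T_{\phi_*}\circ S=\id$ $\nu$-a.e., i.e.\ $S=T_{\phi_*}^{-1}$. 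I expect the main obstacle to be part (ii): the compactness argument for existence (reducing to equicontinuous $c$-concave functions and normalizing by additive constants) together with the uniqueness of the optimal map, which genuinely relies on the strict convexity of $h$ and the absolute continuity of $\mu$; justifying the passage of the limit in Lemma~\ref{lem:c_transform_properties}(iii) under the integral sign is a further technical point to be checked by dominated convergence.
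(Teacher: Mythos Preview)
The paper does not actually prove this theorem: it is stated as a known result with citations to \cite{brenier_polar, GangboMcCann, gangbo_habilitation}, and no proof appears in the body or the appendix. So there is nothing to compare your argument against.

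That said, your outline is a faithful reconstruction of the classical argument found in those references. Part (i) is exactly right. In part (ii), your compactness step (restrict to $c$-concave functions, use the shared modulus of continuity inherited from $h$, normalize by an additive constant, apply Arzel\`a--Ascoli) is the standard one, and your identification of the optimal map via the vanishing first variation and Lemma~\ref{lem:c_transform_properties}\ref{lem:c_transform_properties:derivative} is precisely how the paper itself later computes $\delta J_\phi$ in Lemma~\ref{fact:derivative-J}. The one technical point you flag---passing the limit inside the integral when differentiating $J$---is indeed handled by dominated convergence, since $\big|(\phi_*+\epsilon u)^c-\phi_*^c\big|\le\epsilon\norm{u}_\infty$. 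Your uniqueness sketch is correct in spirit but compressed: the full Gangbo--McCann argument shows that any optimal plan is supported on the $c$-superdifferential of a $c$-concave potential, and strict convexity of $h$ forces this superdifferential to be single-valued $\mu$-a.e.; you have the right ingredients. Part (iii) is the standard symmetry-plus-uniqueness-of-plan argument and is fine as written.
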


From Theorem \ref{thm:dual_problem}, we see that the optimal map can be computed by solving a concave maximization problem.  Thus standard techniques of optimization, such as gradient ascent, can be leveraged to solve the optimal transport problem.  However, in order to use gradient ascent effectively, one must choose the correct notion of distance and step size.  For this reason, we review gradient ascent in Hilbert spaces below.

\subsection{Gradient Ascent}
In this section we recall elementary results on constant step-size gradient ascent methods for concave functions. Let $(\calH, \norm{\cdot}_{\calH})$ be a separable Hilbert space and suppose that $F$ is a smooth convex functional
$$F\colon\mathcal{H}\to\R.$$
We first recall the notions of differential map and gradients.
\begin{definition}
Given a point $\phi\in\calH$ we say that a bounded linear map $\delta F_{\phi}\colon\calH\to\R$ is the first variation (Fréchet derivative) of $F$ at $\phi$ if 
$$  \lim_{\norm{h}_{\calH}\to0} \frac{\norm{F(\phi+h)-F(\phi)-\delta F_{\phi}(h)}_{\calH}}{\norm{h}_{\calH}}=0. $$
\end{definition}

\begin{definition}
Let $\bracket{\cdot,\cdot}_{\calH}$ be the inner product associated with the Hilbert space $\calH$. We say that a map $\nabla_{\calH} F\colon\calH\to\calH$ is the $\calH$-gradient of $F$ if
\[
\bracket{\nabla_{\calH} F(\phi),h}_{\calH} = \delta F_{\phi}(h)
\]
for all $(\phi,h)\in \calH\times\calH$. 
\end{definition}

In the back-and-forth method, we will make use of the $\dot{H}^1$-gradient where $\dot{H}^1(\Omega)$ is the Hilbert space
\begin{equation}
\label{eq:defH1dot}
\dot{H}^1(\Omega)=\{\vp\colon\Omega\to \R: \int_{\Omega} \vp(x)\, dx=0\quad\text{and} \quad \int_{\Omega} |\nabla \vp(x)|^2\, dx<\infty\},
\end{equation}
with inner product
\[
\bracket{\vp_1, \vp_2}_{\dot{H}^1}=\int_{\Omega} \nabla \vp_1(x)\cdot \nabla \vp_2(x)\, dx.
\]
Let us also mention the dual space $\dot{H}^{-1}(\Om)$: for two probability densities $\rho_1$ and $\rho_2$ we define
\begin{equation}
\label{eq:defHm1dot}
\norm{\rho_2 - \rho_1}_{\dot{H}^{-1}}^2 = \int_{\Om}\abs{\nabla\phi(x)}^2\,dx,
\end{equation}
where $\phi$ is the unique solution in $\dot{H}^1(\Om)$ to the Laplace equation $-\laplacian\phi = \rho_2-\rho_1$ with zero Neumann boundary conditions (for more details we refer to~\cite[Section~5.5.2]{santambrogio_otam}). 

The following lemma shows that the $\dot{H}^1$-gradient has a particularly simple form.
The proof is a straightforward application of integration by parts.
\begin{lemma}
Suppose that $F\colon \dot{H}^1(\Om)\to\R$ is a Fr\'echet-differentiable function such that for any $\phi\in \dot{H}^1(\Om)$, the first variation $\delta F_{\phi}$ evaluated at any point $h\in \dot{H}^1(\Om)$ can be written as  integration against a  function $f_{\phi}$, i.e.
\[
\delta F_{\phi}(h)=\int_{\Omega} h(x) f_{\phi}(x) \, dx.
\]
Then the $\dot{H}^1$-gradient of $F$ has the form
\[
\nabla_{\!\dot{H}^1} F(\phi)=(-\Delta)^{-1}\bar{f}_{\phi},
\]
where $(-\Delta)^{-1}$ is the inverse negative Laplacian operator taken with zero Neumann boundary conditions and $\bar{f}_\phi=f_{\phi}-\frac{1}{|\Omega|}\int_{\Omega} f_{\phi}$.
\end{lemma}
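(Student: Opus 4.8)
The plan is to unwind the definition of the $\dot{H}^1$-gradient and verify the claimed formula by a single integration by parts. By definition, $g := \nabla_{\!\dot{H}^1} F(\phi)$ is the unique element of $\dot{H}^1(\Om)$ satisfying
\[
\bracket{g, h}_{\dot{H}^1} = \int_{\Om} \nabla g \cdot \nabla h \, dx = \delta F_\phi(h) = \int_{\Om} h\, f_\phi \, dx
\]
for every $h \in \dot{H}^1(\Om)$. I would take $g = (-\Delta)^{-1}\bar{f}_\phi$ as the candidate, where $(-\Delta)^{-1}$ denotes the inverse Laplacian with homogeneous Neumann conditions, and check that it satisfies this variational identity.

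First I would confirm that the candidate is well defined. The Neumann problem $-\laplacian g = \bar{f}_\phi$ with $\partial_n g = 0$ on $\partial\Om$ is solvable if and only if its right-hand side integrates to zero over $\Om$; this compatibility condition is exactly $\int_\Om \bar{f}_\phi = 0$, which holds by construction of $\bar{f}_\phi = f_\phi - \frac{1}{|\Om|}\int_\Om f_\phi$. This is the structural reason the mean correction appears in the statement: one cannot invert the Neumann Laplacian against $f_\phi$ itself. The solution is unique up to an additive constant, and imposing $\int_\Om g = 0$ pins it down as a genuine element of $\dot{H}^1(\Om)$.

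Next I would compute the left-hand side of the variational identity using Green's first identity:
\[
\int_{\Om} \nabla g \cdot \nabla h \, dx = -\int_{\Om} (\laplacian g)\, h \, dx + \int_{\partial\Om} (\partial_n g)\, h \, dS = \int_{\Om} \bar{f}_\phi\, h \, dx,
\]
where the boundary integral vanishes because $g$ satisfies the zero Neumann condition. Finally, since every $h \in \dot{H}^1(\Om)$ has zero mean, the correction term contributes nothing:
\[
\int_{\Om} \bar{f}_\phi\, h \, dx = \int_{\Om} f_\phi\, h \, dx - \frac{1}{|\Om|}\Big(\int_{\Om} f_\phi\Big)\int_{\Om} h \, dx = \int_{\Om} f_\phi\, h \, dx = \delta F_\phi(h).
\]
This establishes $\bracket{g,h}_{\dot{H}^1} = \delta F_\phi(h)$ for all $h$, so $g = \nabla_{\!\dot{H}^1} F(\phi)$ by uniqueness of the Riesz representative.

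There is essentially no hard part: the argument is exactly the integration by parts advertised before the statement. The only points requiring care are (i) the solvability of the Neumann problem, which forces the mean-zero correction $\bar{f}_\phi$, and (ii) the interplay between the two zero-mean constraints---the one on $\bar{f}_\phi$ that makes $(-\Delta)^{-1}$ well defined, and the one on $h$ built into $\dot{H}^1(\Om)$ that lets us discard the correction term when evaluating the gradient. Strictly speaking one should also note sufficient regularity of $g$ to justify Green's identity (e.g.\ $g \in H^2$ by elliptic regularity, or interpret the identity in the weak $\dot{H}^1$ sense), but this is routine.
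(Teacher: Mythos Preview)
Your proposal is correct and follows exactly the approach the paper indicates: the paper does not write out a proof but states that it is ``a straightforward application of integration by parts,'' which is precisely what you carry out. Your additional remarks on the solvability of the Neumann problem and the role of the two zero-mean constraints are accurate and appropriate.
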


Now that we have seen an example of a $\calH$-gradient, we are ready to discuss gradient ascent methods on general Hilbert spaces.
Gradient ascent maximizes a Fr\'echet-differentiable functional $F\colon\calH\to\R$ by iterating
\[
\phi_{n+1} = \phi_n +\sigma\nabla_{\calH} F(\phi_n),
\]
where the step-size $\sigma>0$ is a fixed constant. In order to obtain convergence to the supremum, i.e.
\[
F(\phi_n)\to \sup F,
\]
one needs to control the continuity of the gradient mapping $\nabla_{\calH} F$.  The following convergence theorem for gradient ascent is one of the cornerstones of optimization.

\begin{theorem}[See for instance~\cite{Nesterov_book}]
\label{thm:gradient_ascent}
Let $F\colon\calH\to\R$ be a Fréchet-differentiable concave functional and suppose that there exists $\sigma>0$ such that 
\[
F(\phi)\geq F(\hat{\phi}) + \delta F_{\hat{\phi}}(\phi-\hat{\phi})
-\frac{1}{2\sigma}\norm{\phi-\hat{\phi}}^2_{\calH}
\]
for all $\phi,\hat{\phi}\in\calH$. Then the gradient ascent iterations
\[
\phi_{n+1} = \phi_n +\sigma\nabla_{\calH} F(\phi_n)
\]
 satisfy the ascent property
\[
F(\phi_{n+1}) - F(\phi_n) \geq \frac{\sigma}{2}\normH{\nabla_{\calH} F(\phi_n)}^2.
\]
Furthermore,  if $F$ has a unique maximizer $\phi_*$ and if $\sup_n \normH{\phi_n}<\infty$ then the sequence $\{\phi_n\}_{n=0}^{\infty}$ converges weakly to $\phi_*$.
\end{theorem}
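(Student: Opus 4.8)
The plan is to handle the two assertions separately: the ascent property follows from a direct substitution into the hypothesized lower bound, while the weak convergence will come from combining the ascent property with the weak sequential compactness of bounded sets in $\calH$ and the concavity of $F$.

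For the ascent property, I would apply the assumed inequality with $\phi=\phi_{n+1}$ and $\hat{\phi}=\phi_n$ and then insert the update rule $\phi_{n+1}-\phi_n=\sigma\nabla_{\calH}F(\phi_n)$. By the very definition of the $\calH$-gradient the linear term becomes $\delta F_{\phi_n}(\phi_{n+1}-\phi_n)=\sigma\bracket{\nabla_{\calH}F(\phi_n),\nabla_{\calH}F(\phi_n)}_{\calH}=\sigma\normH{\nabla_{\calH}F(\phi_n)}^2$, while the quadratic penalty becomes $\tfrac{1}{2\sigma}\normH{\phi_{n+1}-\phi_n}^2=\tfrac{\sigma}{2}\normH{\nabla_{\calH}F(\phi_n)}^2$. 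Subtracting yields exactly $F(\phi_{n+1})-F(\phi_n)\ge\tfrac{\sigma}{2}\normH{\nabla_{\calH}F(\phi_n)}^2$. This step is routine and presents no obstacle.

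For the convergence, I would first observe that the ascent property makes $\{F(\phi_n)\}$ nondecreasing, and since $\phi_*$ is the maximizer it is bounded above by $F(\phi_*)=\sup F$; hence $F(\phi_n)\to L$ for some $L\le\sup F$. Telescoping the ascent inequality then gives $\tfrac{\sigma}{2}\sum_{n}\normH{\nabla_{\calH}F(\phi_n)}^2\le L-F(\phi_0)<\infty$, so that $\nabla_{\calH}F(\phi_n)\to0$ strongly. Using concavity in the form $F(\phi)\le F(\phi_n)+\bracket{\nabla_{\calH}F(\phi_n),\phi-\phi_n}_{\calH}$ for an arbitrary $\phi$, together with $\nabla_{\calH}F(\phi_n)\to0$, $\sup_n\normH{\phi_n}<\infty$, and Cauchy--Schwarz, forces the right-hand side to $L$; thus $\sup F\le L$ and therefore $L=\sup F$.

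It remains to identify the weak limit. Since $\{\phi_n\}$ is bounded in the Hilbert space $\calH$, it is weakly sequentially precompact; let $\phi_{n_k}\rightharpoonup\bar\phi$ be any weakly convergent subsequence. Applying concavity now with base point $\bar\phi$, namely $F(\phi_{n_k})\le F(\bar\phi)+\bracket{\nabla_{\calH}F(\bar\phi),\phi_{n_k}-\bar\phi}_{\calH}$, and letting $k\to\infty$ (the inner product vanishes by the definition of weak convergence) gives $\sup F=L\le F(\bar\phi)$, so $\bar\phi$ is a maximizer and hence $\bar\phi=\phi_*$ by uniqueness. Finally, since every weakly convergent subsequence of the bounded sequence $\{\phi_n\}$ has the same limit $\phi_*$, the subsequence principle (valid by weak sequential compactness of bounded sets in $\calH$) upgrades this to $\phi_n\rightharpoonup\phi_*$. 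The only delicate point is this last stage: showing that the weak limit is a maximizer --- which is really the weak upper semicontinuity of $F$ and is exactly where concavity is indispensable --- and then passing from subsequential to full weak convergence.
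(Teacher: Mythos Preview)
Your proof is correct and follows essentially the same route as the paper's: the ascent property by direct substitution, then gradient vanishing via telescoping, then concavity plus weak compactness and uniqueness to identify the limit. The only cosmetic difference is that you prove weak upper semicontinuity explicitly via the concavity inequality at the weak limit $\bar\phi$, whereas the paper simply invokes it as an automatic property of real-valued concave functionals.
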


For completeness, we give a short proof of Theorem \ref{thm:gradient_ascent} in the appendix. With these tools in hands we are now ready to introduce the back-and-forth method.

\section{The back-and-forth method } \label{sec:main}

Let $\Om\subset\Rd$ be a convex and compact region and consider a cost $c\colon\Om\times\Om\to\R$ of the form
\[
c(x,y) = h(y-x),
\]
for a strictly convex and even function $h\colon\Rd\to\R$. Given two probability densities $\mu$ and $\nu$ supported in $\Om$, we are interested in the optimal transport problem in its dual form
 \begin{equation}
\label{eq:dual}
C(\mu,\nu) = \sup_{\phi} \int_{\Om} \phi(y)\, \nu(y)\,dy + \int_{\Om} \phi^c(x)\,\mu(x)\,dx,
\end{equation}
where the supremum runs over continuous functions $\phi\colon\Om\to\R$, and the $c$-transform is defined by $\phi^c(x) = \inf_{y\in\Om} c(x,y) - \phi(y)$. For more background on the optimal transport problem and on $c$-transforms we refer to Section~\ref{sec:background:ot}. 

Our goal in this section is to develop an efficient algorithm to compute the maximizer of the dual problem.  In what follows, we will consider the dual functional in the two following equivalent forms:
\begin{equation}
\label{eq:defJ}
J(\phi) = \int_{\Om} \phi\,d\nu + \int_{\Om} \phi^c\,d\mu
\end{equation}
and
\begin{equation}
\label{eq:defI}
I(\psi) = \int_{\Om} \psi^c\,d\nu + \int_{\Om} \psi\,d\mu.
\end{equation}
Note that the functionals $J$ and $I$ are essentially identical, the only difference is that the roles of $\mu$ and $\nu$ are flipped.
In order to proceed further, we need formulas for the variations of $J$ and $I$.  
\begin{lemma}[\cite{gangbo_elementary_polar, gangbo_habilitation, GangboMcCann}]
\label{fact:derivative-J}
Consider the functionals $J$ and $I$ defined by~\eqref{eq:defJ} and~\eqref{eq:defI}  over the space of continuous functions $\phi\colon\Om\to\R$ and $\psi\colon\Om\to\R$ respectively.
If $\phi$ is $c$-concave, the first variation of $J$ can be expressed as
\[
\delta J_{\phi} = \nu - T_{\phi\,\#}\mu,
\]
and if $\psi$ is $c$-concave, the first variation of $I$ can be expressed as
\[
\delta I_{\psi} = \mu - T_{\psi\,\#}\nu.
\]
Here we recall that for any $c$-concave function $\vp:\Om\to\R$,
$$T_{\vp}(x)=x-(\nabla h)^{-1}(\nabla\vp^c(x)).$$
\end{lemma}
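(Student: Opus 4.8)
The plan is to compute the first variation of $J$ directly from its definition as a directional derivative and to identify the resulting linear functional with the signed measure $\nu - T_{\phi\,\#}\mu$. Fix a $c$-concave function $\phi$ and a continuous test direction $u\colon\Om\to\R$. By definition,
\[
\delta J_{\phi}(u) = \lim_{\epsilon\to0}\frac{J(\phi+\epsilon u)-J(\phi)}{\epsilon},
\]
and since the first term of $J(\phi)=\int_{\Om} \phi\,d\nu + \int_{\Om} \phi^c\,d\mu$ is linear in $\phi$, the difference quotient splits as
\[
\frac{J(\phi+\epsilon u)-J(\phi)}{\epsilon} = \int_{\Om} u\,d\nu + \int_{\Om} \frac{(\phi+\epsilon u)^c - \phi^c}{\epsilon}\,d\mu.
\]
The first term is already in final form; everything rests on passing to the limit in the second.

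First I would handle the second integral using part~(iii) of Lemma~\ref{lem:c_transform_properties}, which supplies the pointwise limit $\tfrac{1}{\epsilon}\big((\phi+\epsilon u)^c(x) - \phi^c(x)\big)\to -u(T_\phi(x))$ for almost every $x\in\Om$. Granting that the limit may be moved inside the integral, I obtain
\[
\delta J_{\phi}(u) = \int_{\Om} u\,d\nu - \int_{\Om} u\big(T_\phi(x)\big)\,d\mu(x).
\]
The final step is to recognize the second integral as an integral against a pushforward: by the defining property $\int_\Om f\,d(T_{\#}\mu)=\int_\Om f(T(x))\,d\mu(x)$ recalled in Section~\ref{sec:background:ot}, it equals $\int_\Om u\,d(T_{\phi\,\#}\mu)$. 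Hence $\delta J_{\phi}(u)=\int_\Om u\,d(\nu - T_{\phi\,\#}\mu)$ for every continuous $u$, which is exactly the assertion $\delta J_{\phi} = \nu - T_{\phi\,\#}\mu$. The formula for $\delta I_{\psi}$ then follows verbatim after exchanging the roles of $\mu$ and $\nu$, since $I$ is obtained from $J$ by precisely that exchange.

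The main obstacle is justifying the interchange of limit and integral, i.e.\ the application of dominated convergence, and for this I would establish a bound on the difference quotients independent of $\epsilon$. This follows from the elementary Lipschitz estimate for $c$-transforms: since $(\phi+\epsilon u)^c(x)=\inf_{y\in\Om}\big(c(x,y)-\phi(y)-\epsilon u(y)\big)$, perturbing $\phi$ by $\epsilon u$ shifts the infimum by at most $\abs{\epsilon}\,\norm{u}_\infty$, so that $\babs{(\phi+\epsilon u)^c(x)-\phi^c(x)}\le \abs{\epsilon}\,\norm{u}_\infty$ for every $x$. The difference quotient is therefore bounded by $\norm{u}_\infty$ uniformly in $x$ and $\epsilon$; as $u$ is continuous on the compact set $\Om$ this is a finite constant, and as $\mu$ is a probability measure the constant dominating function is integrable. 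Dominated convergence then legitimizes the interchange invoked above, completing the argument.
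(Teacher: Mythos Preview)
Your proof is correct and follows essentially the same route as the paper's: compute the directional derivative, invoke Lemma~\ref{lem:c_transform_properties}\ref{lem:c_transform_properties:derivative} for the $c$-transform term, and identify the pushforward. Your version is in fact more careful, since you explicitly justify the interchange of limit and integral via the uniform Lipschitz bound $\abs{(\phi+\epsilon u)^c-\phi^c}\le\abs{\epsilon}\norm{u}_\infty$ and dominated convergence, a step the paper's short proof leaves implicit.
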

A short proof of Lemma~\ref{fact:derivative-J} is provided in the appendix for the reader's convenience.

Now we are ready to introduce our approach to solve the optimal transport problem: the back-and-forth method.  The method is outlined in Algorithm~\ref{algo:main}. It is based on two main ideas:
\begin{enumerate}[1.]
\item Gradient ascent steps in the $\dot{H}^1$ metric, where
\begin{equation}
\begin{aligned}
\label{eq:gradients_J_I}
\nabla_{\!\dot{H}^1} J(\phi)=(-\Delta)^{-1}\big(\nu - T_{\phi\,\#}\mu\big), \\ \nabla_{\!\dot{H}^1} I(\psi)=(-\Delta)^{-1}\big(\mu - T_{\psi\,\#}\nu\big).
\end{aligned}    
\end{equation}
\item A back-and-forth update scheme, alternating between gradient ascent steps on $J$ and $I$.

\end{enumerate}
\begin{algorithm2e}[!h]
\caption{The back-and-forth method} \label{algo:main}
Given probability densities $\mu$ and $\nu$, set $\phi_0=0, \psi_0=0$, and iterate:
\vspace{.1 in}
\begin{align*}
\phi_{n+\frac{1}{2}} 
&=\phi_n+\sigma\nabla_{\!\dot{H}^1}J(\phi_n), \\
\psi_{n+\frac{1}{2}} &= (\phi_{n+\frac{1}{2}})^c, \\
~\\
\psi_{n+1} 
&=\psi_{n+\frac{1}{2}}+\sigma\nabla_{\!\dot{H}^1} I(\psi_{n+\frac{1}{2}}), \\
\phi_{n+1} &= (\psi_{n+1})^c .
\end{align*}
\end{algorithm2e}
In the following two subsections, we will motivate the choice of $\dot{H}^1$-gradient steps and the back-and-forth updates.   For information about the step size $\sigma$, see Section \ref{sec:numerics:steps-sizes}.

\subsection{\texorpdfstring{$\dot{H}^1$}{H1}-gradient ascent}\label{ssec:gd}
The main steps of the back-and-forth method are the $\dot{H}^1$-gradient ascent steps 
\[
\phi_{n+\frac{1}{2}} =\phi_n+\sigma\nabla_{\!\dot{H}^1}J(\phi_n),
\]
and
\[
\psi_{n+1} =\psi_{n+\frac{1}{2}}+\sigma\nabla_{\!\dot{H}^1} I(\psi_{n+\frac{1}{2}}).
\]
In order to obtain the convergence of the back-and-forth method, we will need to know if these steps increase the values of the dual functionals $J$ and $I$ respectively. In what follows, we will focus exclusively on the $J$ update step, since any properties that hold for $J$ will also hold for $I$ by symmetry.

Recalling Theorem \ref{thm:gradient_ascent} from the previous section, one can show that for a general Hilbert space $\mathcal{H}$, the $\calH$-gradient ascent steps
\[
\phi_{n+1}=\phi_n+\sigma\nabla_{\calH}J(\phi_n)
\]
satisfy the ascent property 
\[
J(\phi_{n+1})\geq J(\phi_n)+\frac{\sigma}{2}\normH{\nabla_{\calH}J(\phi_n)}^2
\]
if the inequality
\begin{equation}\label{eq:suff_ineq}
J(\phi)\geq J(\hat{\phi})+\delta J_{\hat{\phi}}(\phi-\hat{\phi})-\frac{1}{2\sigma}\normH{\phi-\hat{\phi}}^2.
\end{equation}
holds for some $\sigma>0$ and any $\phi,\hat{\phi}\in\calH$.  
Note that this inequality becomes easier to satisfy when the norm associated to $\calH$ becomes stronger i.e. when right hand side becomes more negative.   

To establish the inequality, we need to bound from below
\[
J(\phi)-J(\hat{\phi})-\delta_{\hat{\phi}}J(\phi-\hat{\phi}),
\]
which can be recognized as the error in approximating $J$ with a first-order Taylor expansion about $\hat{\phi}$.   The easiest way to make progress is to use the concave inequality $J(\hat{\phi})\leq J(\phi)+\delta J_{\phi}(\hat{\phi}-\phi)$ to get the bound
\[
J(\phi)-J(\hat{\phi})-\delta_{\hat{\phi}}J(\phi-\hat{\phi})\geq \big(\delta J_{\phi}-\delta J_{\hat{\phi}}\big)(\phi-\hat{\phi}).
\]
 We can use the explicit formula for the first variation in Lemma \ref{fact:derivative-J}, to write 
 \[
 \big(\delta J_{\phi}-\delta J_{\hat{\phi}}\big)(\phi-\hat{\phi})=\int_{\Omega}(\phi-\hat{\phi})(T_{\hat{\phi}\,\#}\mu-T_{\phi\,\#}\mu).
\]
Now inequality (\ref{eq:suff_ineq}) will follow if we can show that 
\[
\int_{\Omega}(\phi-\hat{\phi})(T_{\hat{\phi}\,\#}\mu-T_{\phi\,\#}\mu)\geq -\frac{1}{2\sigma}\normH{\phi-\hat{\phi}}^2.
\]
Thus, our goal is to find a Hilbert space $\calH$ and a parameter $\sigma$ that make this inequality true. 

For any Hilbert space $\calH$, we have the Cauchy--Schwartz inequality
\[
 \int_{\Omega}(\phi-\hat{\phi})(T_{\hat{\phi}\,\#}\mu-T_{\phi\,\#}\mu)\geq - \normH{\phi-\hat{\phi}}\norm{T_{\hat{\phi}\,\#}\mu-T_{\phi\,\#}\mu}_{\calH^*},
\]
where $\calH^*$ is the dual space to $\calH$ with respect to the $L^2$ inner product (i.e. we dualize $\calH$ with respect to the so-called \emph{pivot space} $L^2(\Om)$). If we can show that for an appropriate choice of Hilbert space $\calH$,
\begin{equation}\label{eq:ineq_reduc}
 \norm{T_{\hat{\phi}\,\#}\mu-T_{\phi\,\#}\mu}_{\calH^*}\leq \frac{1}{2\sigma}\normH{\phi-\hat{\phi}},
\end{equation}
 then combining this with our bound from Cauchy--Schwartz we will get (\ref{eq:suff_ineq}) (note a more careful argument can eliminate the factor of $\frac{1}{2}$ on the right side of (\ref{eq:ineq_reduc}), but we won't worry about this here).   Once again, inequality (\ref{eq:ineq_reduc}) becomes easier to satisfy as the norm associated to $\calH$ becomes stronger.  Indeed, as $\calH$ becomes stronger $\calH^*$ becomes weaker, thus the left hand side gets smaller while the right hand side gets larger.  

We can interpret inequality (\ref{eq:ineq_reduc}) as saying that the function $\phi\mapsto T_{\phi\,\#}\mu$ needs to be Lipschitz continuous as a map from $\calH$ to $\calH^*$. Thus, to get a gradient ascent scheme with the ascent property, we will need to choose a Hilbert space $\mathcal{H}$ so that if $\phi$ and $\hat{\phi}$ are close in $\mathcal{H}$ then $T_{\phi\,\#}\mu$ and $T_{\hat{\phi}\,\#}\mu$ are close in $\calH^*$. This is not an easy task. For example, if $\phi$ is not $c$-concave, the map $\phi\mapsto T_{\phi}$ may not even be well-defined, let alone have certain continuity properties.   


If one tries to deal with the above complication head-on, it is not clear how to proceed.  To make progress, we show that an ascent property holds in the Hilbert space $\dot{H}^1(\Omega)=\{\phi\colon\Omega\to\R: \int_{\Om}\phi=0\text{ and }\norm{\nabla \phi}_{L^2(\Omega)}<\infty\}$ if we restrict our attention to the quadratic cost and certain well-behaved $c$-concave functions. Before we state our result, let us note that the choice of $\dot{H}^1$ probably cannot be weakened.  Indeed, the formula for $T_{\phi}$ depends on $\nabla \phi^c$, thus one must have some control on the gradients of $\phi$ and $\hat{\phi}$ to have any hope of showing either (\ref{eq:suff_ineq}) or (\ref{eq:ineq_reduc}).

\begin{prop}
\label{prop:ascent-descent}
Consider the Kantorovich dual problem $\sup_{\phi} J(\phi)$
with the quadratic cost $h(y-x)=\frac{1}{2}|x-y|^2$.
If we know that there exists $\lambda >0$ such that  the iterates
\[
\phi_{n}, \phi_{n+\frac{1}{2}}\in \mathcal{S}_{\lambda}:=\{\phi: (1-\lambda^{-1})I \le D^2\phi(x) \le (1-\lambda)I\}
\]
for every $n$,
then the gradient ascent step
\[
\phi_{n+\frac{1}{2}}=\phi_n+\sigma\nabla_{\!\dot{H}^1}J(\phi_n)
\]
with step size $\sigma = \normlinf{\mu}^{-1}\lambda^{d+1}$ satisfies the ascent property
\[
J(\phi_{n+\frac{1}{2}}) - J(\phi_n) \ge \frac{1}{2}\normlinf{\mu}^{-1}\lambda^{d+1} \normhone{\nabla_{\!\dot{H}^1}J(\phi_n)}^2 .
\]
Moreover, the mass densities move closer to the target $\nu$, 
\[
\normhmone{\rho_{n+\frac{1}{2}} - \nu}^2 - \normhmone{\rho_n - \nu}^2 \le -\normhmone{\rho_{n+\frac{1}{2}}-\rho_n}^2,
\]
where we denote $\rho_n = T_{\phi_n\#}\mu$ and $\rho_{n+\frac{1}{2}} = T_{\phi_{n+\frac{1}{2}}\#}\mu$ and where the $\dot{H}^{-1}$ distance is defined by~\eqref{eq:defHm1dot}.
\end{prop}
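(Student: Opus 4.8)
\emph{Overall strategy.} The plan is to use the quadratic structure to turn the hypothesis $\phi\in\mathcal{S}_\lambda$ into uniform convexity bounds on the transport maps, and then to express both conclusions as integrals along the straight segment from $\phi_n$ to $\phi_{n+\frac12}$. First I would record the convex-analytic dictionary. For $h(z)=\frac12\abs{z}^2$ one has $T_\phi(x)=x-\nabla\phi^c(x)=\nabla u^*(x)$ with $u=\frac12\abs{\cdot}^2-\phi$, and $\phi\in\mathcal{S}_\lambda$ is exactly $\lambda I\le D^2 u\le\lambda^{-1}I$. Hence $u^*$ is also $\lambda$-uniformly convex with $\lambda^{-1}$-Lipschitz gradient, so $T_\phi=\nabla u^*$ is a diffeomorphism with $\lambda I\le DT_\phi\le\lambda^{-1}I$; since $\det DT_\phi\ge\lambda^d$, the change of variables gives the density bound $\normlinf{T_{\phi\,\#}\mu}\le\lambda^{-d}\normlinf{\mu}$. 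Both conclusions will rest only on the two bounds $DT_\phi\preceq\lambda^{-1}I$ and $\normlinf{T_{\phi\,\#}\mu}\le\lambda^{-d}\normlinf{\mu}$.

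\emph{The segment flow.} Writing $g=\phi_{n+\frac12}-\phi_n$ and $\phi_r=\phi_n+rg$, convexity of $\mathcal{S}_\lambda$ (it is cut out by linear inequalities on $D^2\phi$) keeps every $\phi_r$ in $\mathcal{S}_\lambda$, hence $c$-concave. Let $\rho_r=T_{\phi_r\,\#}\mu$. Differentiating $T_{\phi_r}=x-\nabla\phi_r^c$ and invoking Lemma~\ref{lem:c_transform_properties}(iii), which gives $\partial_r\phi_r^c=-g(T_{\phi_r})$, the chain rule yields $\partial_r T_{\phi_r}=DT_{\phi_r}\,\nabla g(T_{\phi_r})$ (the Jacobian is symmetric). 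Thus for any test function $\xi$,
\[
\frac{d}{dr}\int_\Om \xi\,d\rho_r=\int_\Om \nabla\xi(T_{\phi_r})\cdot DT_{\phi_r}\,\nabla g(T_{\phi_r})\,d\mu .
\]
In particular $A(r):=\int_\Om \nabla g(T_{\phi_r})\cdot DT_{\phi_r}\,\nabla g(T_{\phi_r})\,d\mu\ge 0$, and the two bounds above give $A(r)\le\lambda^{-1}\int_\Om\abs{\nabla g}^2\,d\rho_r\le\lambda^{-d-1}\normlinf{\mu}\,\normhone{g}^2=\sigma^{-1}\normhone{g}^2$.

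\emph{The two conclusions.} For the ascent property I would integrate $\delta J$ twice along the segment: using $\delta J_{\phi_r}-\delta J_{\phi_n}=\rho_n-\rho_r$ and Fubini,
\[
J(\phi_{n+\frac12})-J(\phi_n)-\delta J_{\phi_n}(g)=\int_0^1(\delta J_{\phi_r}-\delta J_{\phi_n})(g)\,dr=-\int_0^1(1-r)\,A(r)\,dr .
\]
Since $\int_0^1(1-r)\,dr=\frac12$, the bound on $A$ gives a remainder $\ge-\frac{1}{2\sigma}\normhone{g}^2$, i.e.\ \eqref{eq:suff_ineq} for the pair $(\phi_{n+\frac12},\phi_n)$; the weight $1-r$ is exactly what produces the factor $\frac12$ mentioned after \eqref{eq:ineq_reduc}. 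Substituting $g=\sigma\nabla_{\!\dot{H}^1}J(\phi_n)$ and $\delta J_{\phi_n}(g)=\sigma\normhone{\nabla_{\!\dot{H}^1}J(\phi_n)}^2$ gives the stated ascent inequality. For the $\dot{H}^{-1}$ contraction, expanding the square shows it is equivalent to $\bracket{\nu-\rho_{n+\frac12},\,\rho_{n+\frac12}-\rho_n}_{\dot{H}^{-1}}\ge 0$, and after writing $\nu-\rho_{n+\frac12}=(\nu-\rho_n)-(\rho_{n+\frac12}-\rho_n)$ it reduces to $\bracket{\nu-\rho_n,\,\rho_{n+\frac12}-\rho_n}_{\dot{H}^{-1}}\ge\normhmone{\rho_{n+\frac12}-\rho_n}^2$. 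Because $g=\sigma(-\laplacian)^{-1}(\nu-\rho_n)$, the $\dot{H}^{-1}$ pairing collapses to an $L^2$ pairing and the left side equals $\sigma^{-1}\int g\,(\rho_{n+\frac12}-\rho_n)=\sigma^{-1}\int_0^1 A(r)\,dr$; a Cauchy--Schwarz in the positive form $DT_{\phi_r}$ (same flow representation, now with $\xi$ and $g$ in the two slots) gives $\normhmone{\rho_{n+\frac12}-\rho_n}^2\le\sigma^{-1}\int_0^1 A(r)\,dr$, which closes the inequality.

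\emph{Main obstacle.} The delicate point is the flow step: making $\partial_r T_{\phi_r}=DT_{\phi_r}\,\nabla g(T_{\phi_r})$ and the differentiation under the integral rigorous. Lemma~\ref{lem:c_transform_properties}(iii) is only an almost-everywhere statement and $\phi_r^c$ need not be twice differentiable, so I would lean on the uniform bounds $\lambda I\le DT_{\phi_r}\le\lambda^{-1}I$ and the uniform density bound from the first step to obtain equi-Lipschitz control of the maps $r\mapsto T_{\phi_r}$ and to justify dominated convergence and the change of variables. Once these regularity issues are handled, the remainder is the bookkeeping above.
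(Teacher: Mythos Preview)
Your argument is correct, and it follows a genuinely different route from the paper's.

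\textbf{What the paper does.} The paper isolates the convex part $F(\phi)=-\int\phi^c\,d\mu$ and proves a two-sided Bregman estimate (Lemma~\ref{lemma:main}): an upper bound $F(\phi_2|\phi_1)\le\frac12\normlinf{\mu}\lambda^{-(d+1)}\normhone{\phi_2-\phi_1}^2$ and a lower bound $F(\phi_2|\phi_1)\ge\frac12\normlinf{\mu}^{-1}\lambda^{d+1}\normhmone{\rho_2-\rho_1}^2$. The upper bound is obtained from a closed-form pointwise identity $F(\phi_2|\phi_1)=\int\xi_2\big(T_{\phi_1}(x)\,|\,T_{\phi_2}(x)\big)\,d\mu$ with $\xi_2=\frac12|\cdot|^2-\phi_2$, together with $\lambda$-strong convexity of $\xi_2$; the lower bound is then pulled out of the upper bound by Fenchel duality of Bregman divergences (Lemma~\ref{lemma:Bregman-divergences}\ref{lemma:Bregman-divergences:dual}). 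The ascent inequality follows from the upper bound and the $\dot H^{-1}$ contraction from the lower bound via the identity $\phi=\delta F^*_{\!\rho}$.

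\textbf{What you do differently.} You never introduce $F$ or its Bregman divergence; instead you differentiate the transport map along the segment $\phi_r$ and reduce everything to the single quadratic form $A(r)=\int\nabla g(T_{\phi_r})\cdot DT_{\phi_r}\nabla g(T_{\phi_r})\,d\mu$. The ascent bound comes from the weighted integral $\int_0^1(1-r)A(r)\,dr$; the $\dot H^{-1}$ contraction from Cauchy--Schwarz in the inner product induced by the positive matrix $DT_{\phi_r}$ (applied to the pair $(\xi,g)$ with $\xi=(-\Delta)^{-1}(\rho_{n+\frac12}-\rho_n)$), together with the same bound $B(r)\le\sigma^{-1}\normhone{\xi}^2$ you used for $A(r)$.

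\textbf{Trade-offs.} Your approach is more elementary and entirely self-contained: it uses only the change-of-variables bound and calculus along the segment, bypassing the Bregman/Fenchel machinery. The price, as you correctly identify, is the regularity needed to justify $\partial_r T_{\phi_r}=DT_{\phi_r}\nabla g(T_{\phi_r})$ and the differentiation under the integral; the uniform bounds $\lambda I\le D^2u_r\le\lambda^{-1}I$ do provide the needed control (the maps $\nabla u_r$ are uniformly bi-Lipschitz diffeomorphisms), but this has to be argued. The paper's route sidesteps this entirely: the pointwise identity for $F(\phi_2|\phi_1)$ is purely algebraic and requires no differentiation in the segment parameter, and the duality step converting the upper bound on $F$ into a lower bound on $F^*$ is a one-line Legendre argument. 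So the paper trades a bit of abstraction for a cleaner regularity story, while your argument trades a mild regularity justification for a more transparent, computational proof.
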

See appendix for proof. 

\begin{remark}
Proposition~\ref{prop:ascent-descent} shows that the descent property holds over $\dot{H}^1(\Omega)$ when all of the iterates $\phi_n$ are assumed to have upper and lower Hessian bounds.  Unfortunately, we cannot expect this property to hold throughout the simple ascent scheme 
$\phi_{n+1}=\phi_n+\sigma\nabla_{\!\dot{H}^1} J(\phi_n)$.
Indeed, the gradient ascent updates may eventually push the iterates outside of the desired region.  In fact, we cannot even guarantee that $c$-concavity is preserved by the gradient update steps. 
\end{remark}
\begin{remark}
One way to avoid the aforementioned problem is to add a step where the iterates are projected back onto the set 
\[
\mathcal{S}_{\lambda}=\{\phi: (1-\lambda^{-1})I \le D^2\phi(x) \le (1-\lambda)I\}.
\]
$\mathcal{S}_{\lambda}$ is convex, thus the projection step does not interfere with the ascent property.  Unfortunately, it is expensive to compute projections onto $\mathcal{S}_{\lambda}$ (one must solve a semi-definite programming problem).  Furthermore, when the densities do not have convex support, the maximizer of the dual problem may not lie in $\mathcal{S}_{\lambda}$ for any $\lambda>0$ (however it must be in $\mathcal{S}_{0}$ by $c$-concavity).  In such a case, one would need to send $\lambda\to 0$ to obtain an arbitrarily accurate solution.
\end{remark}

To circumvent some of the difficulties mentioned in the above remarks, we replace traditional gradient ascent steps with a back-and-forth scheme that alternates between maximizing $J$ and maximizing $I$.     

\subsection{Back-and-forth updates}

Let us recall that the Kantorovich dual problem can be written in the form
\begin{equation}
\label{eq:Kantorovich-symmetric}
C(\mu,\nu) = \sup_{\phi,\psi} \int_{\Om} \phi(y)\nu(y)\,dy + \int_{\Om} \psi(x)\mu(x)\,dx,
\end{equation}
where the supremum runs over continuous functions $\phi$ and $\psi$ satisfying
\[
\phi(y) + \psi(x) \le h(y-x),
\]
for all $x,y\in\Om$. This formulation emphasizes the symmetric role played by the potentials $\phi$ and $\psi$. In what follows, it will be convenient to represent the dual functional in the form
\begin{equation}\label{eq:symmetric_dual}
D(\phi,\psi) = \int_{\Om} \phi\,\nu\,dy + \int_{\Om}\psi\,\mu\,dx - \iota_{\calC}(\phi,\psi),
\end{equation}
where the constraints are encoded by the convex indicator function $\iota_{\calC}$
which takes value $0$ on the convex set
\[\calC = \set{(\phi,\psi) :  \forall x,y\in\Om\quad \phi(y)+\psi(x) \le h(y-x)},\]
and $+\infty$ for all other pairs $(\phi, \psi)$. Then the Kantorovich problem~\eqref{eq:Kantorovich-symmetric} can be simply written $C(\mu,\nu) = \sup_{\phi,\psi} D(\phi,\psi)$. Moreover the functionals $J$ and $I$ can be obtained by either eliminating $\phi$ or $\psi$ from the symmetric representation (\ref{eq:symmetric_dual}). Indeed, one can check that
\[
\phi^c = \argmax_{\psi} D(\phi,\psi), \quad\psi^c = \argmax_{\phi} D(\phi,\psi),
\]
and thus
\[
J(\phi) = D(\phi,\phi^c) = \sup_{\psi} D(\phi,\psi), \quad I(\psi) = D(\psi^c,\psi) = \sup_{\phi} D(\phi,\psi).
\]

A vanilla gradient ascent scheme $\phi_{n+1} = \phi_n + \sigma\nabla_{\!\dot{H}^1}J(\phi_n)$ focuses (arbitrarily) on $\phi$-space, iterating gradient steps on the functional $J$. Alternatively, one could work instead in $\psi$-space and write a gradient ascent scheme on the functional $I$. A perhaps better idea is to alternate between $\phi$-space and $\psi$-space. This is the core idea of the back-and-forth method given in Algorithm~\ref{algo:main}. Recall that a  current iterate $(\phi_n,\psi_n)$ is updated as follows:
\begin{equation*}
\label{eq:back-and-forth}
\begin{aligned}
\phi_{n+\frac{1}{2}} &= \phi_n + \sigma\nabla_{\!\dot{H}^1} J(\phi_n),  \\
\psi_{n+\frac{1}{2}} &= (\phi_{n+\frac{1}{2}})^c, \\
\psi_{n+1} &= \psi_{n+\frac{1}{2}} + \sigma \nabla_{\!\dot{H}^1} I(\psi_{n+\frac{1}{2}}), \\
\phi_{n+1} &= (\psi_{n+1})^c.
\end{aligned}
\end{equation*}

The back-and-forth approach already corrects a difficulty we identified in the previous subsection.  When one considers a pure gradient ascent scheme
$\phi_{n+1}=\phi_n+\sigma\nabla_{\!\dot{H}^1}J(\phi_n)$ on $\phi$ and $J$ only ( or analogously on $\psi$ and $I$ only),
there is no reason that the iterates remain $c$-concave.   If $\phi$ or $\psi$ is not $c$-concave, then the gradients $\nabla_{\!\dot{H}^1} J(\phi)$ or $\nabla_{\!\dot{H}^1} I(\psi)$ may not be well defined.   In the back-and-forth method, whenever one takes a gradient, the function in question is always $c$-concave.  Indeed, $\phi_n=\psi_{n-1}^c$ and $\psi_{n+\frac{1}{2}}=\phi_{n+\frac{1}{2}}^c$ thus, $\nabla_{\!\dot{H}^1} J(\phi_n)$ and $\nabla_{\!\dot{H}^1} I(\psi_{n+\frac{1}{2}})$ are always well-defined. 

In addition, the back-and-forth updates can never perform worse than a pure gradient ascent scheme.  Indeed, the intermediate steps where we take a $c$-transform can only increase the value of the dual problem.  It follows from property (i) in Lemma \ref{lem:c_transform_properties} that
\[
    J(\phi_{n+\frac{1}{2}})\leq J(\phi_{n+\frac{1}{2}}^{cc})=I(\psi_{n+\frac{1}{2}}),
\]
and
\[
    I(\psi_{n+1})\leq I(\psi_{n+1}^{cc})=J(\phi_{n+1}).
\]
Thus, the intermediate $c$-transform steps help increase the value of the dual functional. If we combine this with our previous result from Section \ref{ssec:gd}, we can conclude the following result about the back-and-forth method.

\begin{prop}
\label{prop:ascent-back-and-forth}
Consider the back-and-forth method (Algorithm~\ref{algo:main}) for the quadratic-cost optimal transport problem. Assume that there exists $\lambda > 0$ such that all the iterates
$\phi_n, \phi_{n+\frac{1}{2}}$ and $\psi_n, \psi_{n+\frac{1}{2}}$ lie in the set of functions
\[\mathcal{S}_{\lambda}=\{\vp: (1-\lambda^{-1})I \le D^2\vp(x) \le (1-\lambda)I\}.
\]
 Then we have the chain of inequalities 
\[
D(\phi_{n+1},\psi_{n+1}) \ge D(\phi_{n+\frac{1}{2}},\psi_{n+\frac{1}{2}}) \ge D(\phi_n,\psi_n),
\]
for all integers $n$. Equivalently, the values of $I$ and $J$ alternatively increase: 
\[
I(\psi_{n+1}) \ge I(\psi_{n+\frac{1}{2}}) \ge J(\phi_{n+\frac{1}{2}}) \ge J(\phi_n) \ge I(\psi_n).
\]
\end{prop}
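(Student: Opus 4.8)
The plan is to prove the five-term chain for $I$ and $J$ first, and then recover the $D$-chain by reading off the values of $D$ at the iterates. The two chains are equivalent because the back-and-forth updates always produce the partner potential by a $c$-transform, so that $\phi_n = \psi_n^c$ and $\psi_{n+\frac12} = \phi_{n+\frac12}^c$ for every $n$ (the initialization $\phi_0 = \psi_0 = 0$ is consistent with this, since $0^c = 0$ for the quadratic cost). Combined with the identities $J(\phi) = D(\phi,\phi^c)$ and $I(\psi) = D(\psi^c,\psi)$ coming from the symmetric representation~\eqref{eq:symmetric_dual}, these relations give
\[
D(\phi_n,\psi_n) = I(\psi_n), \qquad D(\phi_{n+\frac12},\psi_{n+\frac12}) = J(\phi_{n+\frac12}), \qquad D(\phi_{n+1},\psi_{n+1}) = I(\psi_{n+1}),
\]
so the desired $D$-chain is exactly the statement that $I(\psi_n) \le J(\phi_{n+\frac12}) \le I(\psi_{n+1})$, a consequence of the finer five-term chain.

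For the five-term chain I would assemble four inequalities. The two \emph{ascent} inequalities $J(\phi_{n+\frac12}) \ge J(\phi_n)$ and $I(\psi_{n+1}) \ge I(\psi_{n+\frac12})$ are precisely the ascent property of Proposition~\ref{prop:ascent-descent} (and its mirror image obtained by exchanging the roles of $\mu$ and $\nu$), which applies because the hypothesis places $\phi_n,\phi_{n+\frac12}$ and $\psi_{n+\frac12},\psi_{n+1}$ in $\mathcal{S}_{\lambda}$. The two \emph{$c$-transform} inequalities $I(\psi_{n+\frac12}) \ge J(\phi_{n+\frac12})$ and $J(\phi_n) \ge I(\psi_n)$ come from property~(i) of Lemma~\ref{lem:c_transform_properties}: since $\phi \le \phi^{cc}$ pointwise and the measures are nonnegative, integrating against $\nu$ (resp.\ $\mu$) upgrades $\phi_{n+\frac12}$ to $\phi_{n+\frac12}^{cc}$ (resp.\ $\psi_n$ to $\psi_n^{cc}$) while the complementary term is left unchanged thanks to $\phi^{ccc} = \phi^c$. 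These are the two inequalities already recorded in the text just above the proposition, namely $J(\phi_{n+\frac12}) \le I(\psi_{n+\frac12})$ and, after shifting the index, $I(\psi_n) \le J(\phi_n)$. Stacking the four inequalities gives $I(\psi_{n+1}) \ge I(\psi_{n+\frac12}) \ge J(\phi_{n+\frac12}) \ge J(\phi_n) \ge I(\psi_n)$.

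Since every inequality is borrowed from a result already established, there is no genuine analytic obstacle; the care that is needed is bookkeeping. The load-bearing hypothesis is the membership of all iterates in $\mathcal{S}_{\lambda}$, which is exactly what licenses the two ascent steps — without it Proposition~\ref{prop:ascent-descent} does not apply and the gradient inequalities may fail. I would also double-check the index alignment in the $c$-transform identities (the text's $I(\psi_{n+1}) \le J(\phi_{n+1})$ must be reused at level $n$ to obtain $J(\phi_n) \ge I(\psi_n)$) and confirm that the initialization satisfies $\phi_0 = \psi_0^c$, so that the identification $D(\phi_0,\psi_0) = I(\psi_0)$ holds at the base of the recursion and the equivalence of the two chains is valid for all $n$.
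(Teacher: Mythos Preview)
Your proposal is correct and follows essentially the same approach as the paper: both invoke Proposition~\ref{prop:ascent-descent} for the two gradient-ascent inequalities and use the property $\varphi\le\varphi^{cc}$ (equivalently, that taking a $c$-transform never decreases $D$) for the two intermediate inequalities, then identify the $D$-values with $I$ and $J$ via $\phi_n=\psi_n^c$ and $\psi_{n+\frac12}=\phi_{n+\frac12}^c$. Your write-up is in fact more careful than the paper's about the bookkeeping (index alignment and the base case $0^c=0$), but there is no substantive difference in strategy.
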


The short proof of Proposition~\ref{prop:ascent-back-and-forth} is given in the appendix.

Note that even with the back-and-forth updates we cannot give a rigorous convergence proof without assuming convexity bounds on the iterates.  Nonetheless, for reasons that we do not fully understand, the back-and-forth method appears to be extremely stable even for relatively large step sizes (a vanilla ascent method on either $J$ or $I$ behaves less favorably).  Furthermore, the back-and-forth method converges extremely rapidly to the maximizer. Heuristically, we believe this is because it is easier to build certain features in either ``$\phi$-space'' or ``$\psi$-space''. Indeed, if $(\phi, \psi)$ is a pair of $c$-conjugate functions for the quadratic cost, one has the relationship
$D^2\phi(x-\nabla\phi(x))=(D^2\psi(x))^{-1}$ i.e. the quadratic $c$-transform inverts bounds on Hessians.  By alternating between spaces, one can build features in the space where they are smaller, and thus converge to the solution more rapidly. 

\section{Numerical Implementation and Results}\label{sec:numerics}

\subsection{The fast \texorpdfstring{$c$}{c}-transform}
\label{ssec:numeric_c_transform}
Computing the $c$-transform on a regular grid requires solving the optimization problem 
$$ \phi^c(x)= \inf_{y} h(y-x)-\phi(y) $$
for each grid point $x$.  In one dimension, the strict convexity of the cost implies that 
the $y$ derivative 
$$ h'(y-x)-\phi'(y) $$
is a decreasing function of $x$.  As a result, the minimizers $y(x)=\argmin_y h(y-x)-\phi(y)$ are monotone increasing with respect to $x$.  In other words, if we have $x_1\leq x_2$ then $y(x_1)\leq y(x_2)$.  This observation can be exploited to design a divide-and-conquer algorithm that computes a 1-dimensional $c$-transform on $n$ points in $O(n\log(n))$ operations \cite{corrias_legendre}. 

The key idea is that if one knows the minimizers $y(x)$ for points the points $x=\frac{k}{n}, \frac{k+2}{n}$ then 
$$ y\big(\frac{k}{n}\big)\leq y\big(\frac{k+1}{n}\big)\leq y\big(\frac{k+2}{n}\big), $$
i.e. the minimizer for the middle point is ``trapped'' between the other two.   Thus, one can separately compute a $c$-transform for the even grid points and a $c$-transform for the odd grid points then use the above interlacing property to reconstruct the full solution. See \cite{corrias_legendre} for the explicit algorithm and more details.

If we restrict our attention to costs $h$ which decompose along each dimension, i.e. 
$$ h(y-x)=\sum_{i=1}^d h_i(y_i-x_i),$$
then we can compute a $d$-dimensional $c$-transform by repeated applications of the 1-dimensional $c$-transform.  For example, in 2 dimensions, we have $$ \inf_y h(y-x)-\phi(y)=\inf_{y_2}  \inf_{y_1}  h_2(y_2-x_2)+h_1(y_1-x_1)-\phi(y_1,y_2) $$
$$=\inf_{y_2} h_2(y_2-x_2)+\phi(\cdot,y_2)^c(x_1). $$
If we write $\tilde{\phi}(x_1,y_2)=-\phi(\cdot,y_2)^c(x_1)$ then the last line is a $c$-transform with respect to $y_2$
$$\inf_{y_2} h_2(y_2-x_2)-\tilde{\phi}(x_1,y_2). $$
Thus, in 2 dimensions, one can compute the full $c$-transform by first computing one-dimensional $c$-transforms along all horizontal lines, and then along all vertical lines.  This idea generalizes to arbitrary dimension, and is the exact same mechanism which is used to compute the multidimensional Fast Fourier Transform (FFT). 

In the (extremely important) special case $h(y-x)=\frac{1}{2}|y-x|^2$, the $c$-transform can be computed even more efficiently.  Splitting the quadratic part into 3 terms, we get 
$$\phi^c(x)=\frac{1}{2}|x|^2+\inf_y\Big[ -x\cdot y+\frac{1}{2}|y|^2-\phi(y)\Big].$$
If we then set $\vp(y)=\frac{1}{2}|y|^2-\phi(y)$,
and let $\vp^*(x)$ denote the Legendre transform of $\vp$:
\begin{equation}\label{eq:legendre}\vp^*(x)=\sup_{y} x\cdot y-\vp(y),\end{equation}
then
$$\phi^c(x)=\frac{1}{2}|x|^2-\vp^*(x).$$
Therefore, one can compute the quadratic $c$-transform by computing a Legendre transform instead. 

A one-dimensional Legendre transform on a set of $n$ points can be computed in $O(n)$ operations \cite{lucet_legendre}. The idea of \cite{lucet_legendre} uses two very important properties of the Legendre transform, namely that $\vp^*=\vp^{***}=(\vp^{**})^*$ and that $\vp^{**}$ is the convex hull of $\vp$.  Therefore, computing (\ref{eq:legendre}) amounts to finding the values $y(x)$ where  
\begin{equation}\label{eq:legendre_search} x-(\vp^{**})'(y)\end{equation}
changes sign from positive to negative.   $\vp^{**}$ is convex, so the slopes $(\vp^{**})'(y)$ are increasing.  Therefore, finding the sign change $y(x)$ that corresponds to each $x$ in (\ref{eq:legendre_search}) can be done in a single sweep through the values of $(\vp^{**})'(y)$. On a one-dimensional grid with $n$ regularly spaced points, the sweep step takes $O(n)$ operations and crucially, the convex hull $\vp^{**}$ can also be computed in $O(n)$ operations \cite{lucet_legendre}.  Finally, a multidimensional Legendre transform on a regular grid can be computed by decomposing the problem along each dimension in the same manner as described above for the $c$-transform.  For the explicit algorithm and more details, we refer to~\cite{lucet_legendre}.

\subsection{Step sizes}
\label{sec:numerics:steps-sizes}

As we saw in Section \ref{sec:main}, it is not easy to determine the Lipschitz constant of the gradient mapping 
at an arbitrary function $\phi$.  As such, it is not clear how to optimally choose $\sigma$ for a fixed step size gradient ascent scheme.  Thus, rather than choose a single fixed value $\sigma$ at the outset of the algorithm, we update $\sigma=\sigma_n$ throughout the algorithm using Armijo--Goldstein type update rules \cite{Armijo}. 
 
After each gradient step, we compare the difference $J(\phi_{n+1})-J(\phi_n)$ to the squared norm  $\normhone{\nabla_{\!\dot{H}^1} J(\phi_n)}^2$ (and correspondingly for $I$ and $\psi_{n+1}$, $\psi_n$).  Given parameters $0<\beta_1<\beta_2<1 $, $\alpha_1>1$ and $\alpha_2<1$ we check whether
 \[
  -\sigma_n\beta_2\normhone{\nabla_{\!\dot{H}^1} J(\phi_n)}^2\leq J(\phi_{n})-J(\phi_{n+1})\leq -\sigma_n\beta_1\normhone{\nabla_{\!\dot{H}^1} J(\phi_n)}^2.
 \]
If the upper inequality fails, we decrease $\sigma_n$ by taking $\sigma_{n+1}=\alpha_2\sigma_n$, and if the lower inequality fails then we increase $\sigma_n$ by taking $\sigma_{n+1}=\alpha_1\sigma_n$.  In all of our experiments we take $\beta_1=\frac{1}{4}, \beta_2=\frac{3}{4}, \alpha_1=\frac{5}{4},$ and $\alpha_{2}=\frac{4}{5}$ and we choose $\sigma=8\min(\norm{\mu}_{L^{\infty}}^{-1},\norm{\nu}_{L^{\infty}}^{-1}) $ as our starting value. To ensure that the step sizes stay bounded away from zero, we do not allow the update rules to drop the step size below a small constant $\sigma_{\textrm{min}}=0.01$. 

Note that we do not perform any backtracking. The new value of $\sigma=\sigma_{n+1}$ is simply used in the next update step. Indeed, evaluating $J$ or $I$ at a function requires computing a $c$-transform, thus it is not worth trying to optimize the step size at a particular iterate.

\subsection{Experiments}
Throughout this section we will assume that $\Omega=[0,1]^d$ is the unit cube in $\R^d$.  When implementing our algorithms, we discretize $\Omega$ using a regular finite $d$-dimensional grid.  Recall that Laplace equations on this domain should be solved with zero Neumann boundary conditions.  This ensures that there is no flux of mass outside of the computational domain.

All numerical algorithms were coded in C and executed on a single 1.6 GHz core with 8GB RAM. Inversion of the Laplace operator was performed using the Fast Fourier Transform (FFT).  All FFTs were calculated using the free FFTW C library.   

In what follows, we will consider three sets of experiments. In the first two sets of experiments, we restrict our attention to optimal transport with the quadratic cost $\frac{1}{2}|x-y|^2$. 
We start by computing optimal transport maps between geometric shapes where explicit answers are available.  This allows us to verify the accuracy and convergence rate of the method. We also verify that the back-and-forth method substantially outperforms a gradient ascent method on the dual problem with no back-and-forth structure.  Next, we apply our method to images, and use the resulting optimal map to compute the displacement interpolation \cite{fry, McCann_interpolation} between the source and target images.  As we shall see, our computed maps are sufficiently accurate to capture the fine-scale details of the images. Finally, in the last set of experiments, we consider the more general class of costs outlined in Section \ref{ssec:numeric_c_transform}, and study how the optimal map varies as the underlying cost changes.

\subsubsection*{Quadratic cost optimal transport}  In this subsection, we focus on computing optimal transport with the quadratic cost
\[h(y-x)=\frac{1}{2}|y-x|^2,\]
which represents the most important special case of optimal transport.  

We begin with a simple example where the two probability densities $\mu$ and $\nu$ differ only by a translation, i.e. $\mu(x)=\nu(x+a)$ for a fixed constant $a\in\Rd$.  In this case, it is known that the optimal map $T_*$ pushing $\mu$ to $\nu$ is itself a translation: $T_*(x)=x+a$, and the transportation cost is $\frac{1}{2}|a|^2$. Thus, we can directly check the accuracy of our method on this class of elementary examples.   

First, in two dimensions, suppose that $\mu$ is uniform and supported on a disc of radius $\frac{1}{8}$ centered at $(\frac{1}{4}, \frac{1}{4})$ and $\nu$ is uniform and supported on a disc of the same radius centered at $(\frac{3}{4}, \frac{3}{4})$ (both densities are normalized to have unit mass).
\begin{center}
\begin{tikzpicture}[scale=1.5]
\fill[black] (0,0) rectangle (1,1);
\fill[white] (1/4,1/4) circle (1/8);
\draw (1/2,-0.2) node {$\mu$};
\tikzset{shift={(1.2,0)}}
\fill[black] (0,0) rectangle (1,1);
\fill[white] (3/4,3/4) circle (1/8);
\draw (1/2,-0.2) node {$\nu$};
\end{tikzpicture}
\end{center}
It then follows that the quadratic transportation cost between these densities is $$\frac{1}{2}\Big(\big(\frac{1}{2}\big)^2+\big(\frac{1}{2}\big)^2\Big)=\frac{1}{4}.$$
Our results for this experiment are presented in Table~\ref{table:2d_2_discs}.  We run our algorithm until the difference between the computed transportation cost and the exact solution is less than the prescribed error tolerance.  We see that to get an error of $10^{-4}$ we only need 3 iterations, and to get an error of $10^{-8}$ we only need 5 iterations.  Let us also highlight that the performance of the algorithm is completely independent of the tested grid size. 

\begin{table}
\center{\caption{Two discs in $2$ dimensions\label{table:2d_2_discs}}}

\begin{tabular}{ c | c | c | c | c |}
\cline{2-5}
 & \multicolumn{2}{|c|}{Error $10^{-4}$} & \multicolumn{2}{|c|}{Error $10^{-8}$} \\ \hline 
\multicolumn{1}{|c|}{Grid size} &  Iterations & Time (s) & Iterations & Time (s) \\ \hline
\multicolumn{1}{|c|}{$512 \times 512$} & 3 & 1.00 & 5 & 1.25 \\ \hline
\multicolumn{1}{|c|}{$1024 \times 1024$} & 3 & 4.23 & 5 & 5.21 \\ \hline
\multicolumn{1}{|c|}{$2048 \times 2048$} & 3 & 16.7 & 5 & 21.75 \\ \hline
\multicolumn{1}{|c|}{$4096 \times 4096$} & 3  & 56.46 & 5 & 75.70 \\ \hline
\end{tabular}
\end{table}

Next we repeat the same experiment in three dimensions. In this case, $\mu$ is uniform and supported on a ball centered at $(\frac{1}{4}, \frac{1}{4}, \frac{1}{4})$ with radius $\frac{1}{8}$ and $\nu$ is uniform and supported on a ball centered at $(\frac{3}{4}, \frac{3}{4}, \frac{3}{4})$ with the same radius.  The cost to transport the densities to one another is $\frac{3}{8}$. Our results for this experiment are presented in Table \ref{table:3d_2_balls}.  Again, we run our algorithm until the difference between the computed transportation cost and the exact solution is less than the prescribed error tolerance.  Once again we see that the algorithm converges extremely rapidly, and the convergence rate is independent of the grid size. 

\begin{table}
\center{\caption{\label{table:3d_2_balls}Two balls in $3$ dimensions}}

\begin{tabular}{ c | c | c | c | c |}
\cline{2-5}
 & \multicolumn{2}{|c|}{Error $10^{-4}$} & \multicolumn{2}{|c|}{Error $10^{-8}$} \\ \hline 
\multicolumn{1}{|c|}{Grid size} &  Iterations & Time (s) & Iterations & Time (s) \\ \hline
\multicolumn{1}{|c|}{$128\times 128\times 128$} & 6 & 17.89 & 10  & 27.00 \\ \hline
\multicolumn{1}{|c|}{$256\times 256\times 256$} & 6 & 156.94 & 9 & 205.5 \\ \hline
\multicolumn{1}{|c|}{$384\times 384\times 384$} & 6 & 472.14 & 9 & 699.21 \\ \hline

\end{tabular}
\end{table}

Now we turn to a more difficult example where the optimal transport map is discontinuous.  In two dimensions, we take $\mu$ to be the renormalized characteristic function of a square with side lengths $\frac{1}{4}$ centered at $(\frac{1}{2}, \frac{1}{2})$ and $\nu$ to be the renormalized characteristic function of the union of four squares each with side length $\frac{1}{8}$ and centers at $(\frac{3}{16}+\frac{5i}{8}, \frac{3}{16}+\frac{5j}{8})$ for $i,j \in \{0,1\}$.
\vspace{0.1em}
\begin{center}
\begin{tikzpicture}[scale=1.5]
\fill[black] (0,0) rectangle (1,1);
\fill[white] (3/8,3/8) rectangle (5/8,5/8);
\draw (1/2,-0.2) node {$\mu$};
\tikzset{shift={(1.2,0)}}
\fill[black] (0,0) rectangle (1,1);
\fill[white] (1/8,1/8) rectangle (2/8,2/8);
\fill[white] (6/8,1/8) rectangle (7/8,2/8);
\fill[white] (1/8,6/8) rectangle (2/8,7/8);
\fill[white] (6/8,6/8) rectangle (7/8,7/8);
\draw (1/2,-0.2) node {$\nu$};
\end{tikzpicture}
\end{center}
In this case, there is an explicit solution for the optimal transport map. One cuts $\mu$ into 4 squares along the lines $y=\frac{1}{2}$ and $x=\frac{1}{2}$ and then each cut square moves to the closest square in $\nu$ by translation.  Thus, we see that the map is discontinuous along the lines $y=\frac{1}{2}$ and $x=\frac{1}{2}$. 
Each cut square is translated by a vector of the form $(\pm \frac{1}{4}, \pm \frac{1}{4})$, therefore, the  transportation cost is $\frac{1}{2}\big((\frac{1}{4})^2+(\frac{1}{4})^2\big)=\frac{1}{16}$.  The results for this experiment are presented in Table \ref{table:2d_4_squares}.  Note that due to the discontinuity in the optimal map, it is not possible to achieve the same level of accuracy as the first example.  Nonetheless, we see that the algorithm still converges to a highly accurate solution in a small number of iterations.  

Next, we consider an analogue of this discontinuous example in 3 dimensions.  In this setting, we take $\mu$ to be the renormalized characteristic function of a cube with side length $\frac{1}{4}$ centered at $(\frac{1}{2}, \frac{1}{2}, \frac{1}{2})$ and $\nu$ to be the renormalized characteristic function of the union of eight cubes each with side length $\frac{1}{8}$ and centers at $(\frac{3}{16}+\frac{5i}{8}, \frac{3}{16}+\frac{5j}{8}, \frac{3}{16}+\frac{5k}{8})$ for $i,j ,k \in \{0,1\}$.  For this problem, the optimal transportation map is obtained by cutting $\mu$ into eight cubes along the lines $x=\frac{1}{2}, y=\frac{1}{2}$, and $z=\frac{1}{2}$ and translating each of the eight cubes to the closest cube in $\nu$.  Each cube is translated by a vector of the form $(\pm\frac{1}{4}, \pm\frac{1}{4}, \pm\frac{1}{4})$, thus, the total transportation cost is $\frac{1}{2}\big((\frac{1}{4})^2+(\frac{1}{4})^2+(\frac{1}{4})^2\big)=\frac{3}{32}$. Once again, due to the discontinuity of the map, the accuracy depends on the grid resolution.  As a result, for grids of size $384\times 384\times 384$ and smaller, the method becomes stationary once the accuracy drops slightly below $10^{-5}$.   Nonetheless, we continue to see that the algorithm converges rapidly to a stationary state.

\begin{table}
\center{\caption{One square to four squares \label{table:2d_4_squares}}}

\begin{tabular}{ c | c | c | c | c | c| c|}
\cline{2-7}
 & \multicolumn{2}{|c|}{Error $10^{-4}$} & \multicolumn{2}{|c|}{Error $10^{-5}$}
 & \multicolumn{2}{|c|}{Error $10^{-6}$}
 \\ \hline 
\multicolumn{1}{|c|}{Grid size} &  Iterations & Time (s) & Iterations & Time (s)  & Iterations & Time (s)\\ \hline
\multicolumn{1}{|c|}{$512 \times 512$} & 3 & 1.08 & 5 & 1.45 & 13 & 2.96\\ \hline
\multicolumn{1}{|c|}{$1024 \times 1024$} & 3 & 4.98 & 5 & 5.82 & 14 & 12.2  \\ \hline
\multicolumn{1}{|c|}{$2048 \times 2048$} & 3 & 18.9 & 5 & 25.2 & 14  & 53.9 \\ \hline
\multicolumn{1}{|c|}{$4096 \times 4096$} & 3  & 68.3 & 5 & 93.6 & 13 & 217 \\ \hline
\end{tabular}
\end{table}

\begin{table}
\center{\caption{One cube to eight cubes\label{table:3d_8_cubes}}}

\begin{tabular}{ c | c | c | c | c |}
\cline{2-5}
 & \multicolumn{2}{|c|}{Error $10^{-3}$} & \multicolumn{2}{|c|}{Error $10^{-5}$} \\ \hline 
\multicolumn{1}{|c|}{Grid size} &  Iterations & Time (s) & Iterations & Time (s) \\ \hline
\multicolumn{1}{|c|}{$128\times 128\times 128$} & 3 & 13.36 & 6  & 21.63 \\ \hline
\multicolumn{1}{|c|}{$256\times 256\times 256$} & 3 & 117.44 & 8 & 255.17 \\ \hline
\multicolumn{1}{|c|}{$384\times 384\times 384$} & 3 & 377.05 & 13 & 1577.23 \\ \hline

\end{tabular}
\end{table}

Finally, we compare the performance of the back-and-forth method (BFM) to a simple $\dot{H}^1$-gradient ascent method on the dual problem (\ref{eq:dual}).   The gradient ascent method alternates the following two steps
\begin{equation*}
\begin{aligned}
\phi_{n+\frac{1}{2}}&=\phi_n+\sigma\nabla_{H^1} J(\phi_n),\\
\phi_{n+1}&=\big(\phi_{n+\frac{1}{2}}\big)^{cc},
\end{aligned}
\end{equation*}
hence there is no back-and-forth structure, and the method only considers one of the dual problems.  
We consider two test cases in 2-dimensions, the disc to disc and the square to four squares examples described above.  In each experiment, we choose a grid of size $1024\times 1024$ and run each algorithm until a certain error tolerance is met.  In both experiments, the back-and-forth method substantially outperforms the pure gradient ascent method (c.f. Tables~\ref{table:vs_disc_1024} and~\ref{table:vs_square_1024}).  Note that each gradient ascent iteration only requires computing two Legendre transforms and a single FFT, while each back-and-forth iteration must compute 4 Legendre transforms and 2 FFTs.  Even when accounting for this difference, the back-and-forth method is still vastly superior in both iteration count and computation time.  

\begin{table}
\center{\caption{BFM v.s. gradient ascent (two discs) \label{table:vs_disc_1024}}}

\begin{tabular}{ c | c | c | c | c | c| c|}
\cline{2-7}
 & \multicolumn{2}{|c|}{Error $10^{-4}$} & \multicolumn{2}{|c|}{Error $10^{-5}$}
 & \multicolumn{2}{|c|}{Error $10^{-6}$}
 \\ \hline 
\multicolumn{1}{|c|}{Method} &  Iterations & Time (s) & Iterations & Time (s)  & Iterations & Time (s)\\ \hline
\multicolumn{1}{|c|}{BFM} & 3 & 4.23 & 5 & 5.21 & 5 & 5.21\\ \hline
\multicolumn{1}{|c|}{gradient ascent} & 50 & 19.0 & 149 & 47.9 & 321 & 94.5  \\ \hline
\end{tabular}
\end{table}

\begin{table}
\center{\caption{BFM v.s. gradient ascent (1 square to 4 squares) \label{table:vs_square_1024}}}

\begin{tabular}{ c | c | c | c | c | c| c|}
\cline{2-7}
 & \multicolumn{2}{|c|}{Error $10^{-4}$} & \multicolumn{2}{|c|}{Error $10^{-5}$}
 & \multicolumn{2}{|c|}{Error $10^{-6}$}
 \\ \hline 
\multicolumn{1}{|c|}{Method} &  Iterations & Time (s) & Iterations & Time (s)  & Iterations & Time (s)\\ \hline
\multicolumn{1}{|c|}{BFM} & 3 & 4.98 & 5 & 5.82 & 14 & 12.2\\ \hline
\multicolumn{1}{|c|}{gradient ascent} & 34 & 13.6 & 106 & 34.0 & 744 & 207  \\ \hline
\end{tabular}
\end{table}

\subsubsection*{Optimal transport of images}

In our next set of experiments, we consider optimal transport between two arbitrary black and white images. We shall assume that the image pixels take values in $[0,1]$, where $0$ represents black and $1$ represents white.  The images can then be converted into probability densities by renormalizing pixel values so that each image has total mass $1$.  

In the context of image processing, one typically wishes to compute optimal transport maps to obtain realistic looking interpolations between images.  We will focus on the quadratic cost, which is the most natural for this task.
Given two images with associated probability densities $\mu, \nu$ and the optimal map $T_*$ between them, one can define for each time $t\in [0,1]$ the so called \emph{displacement interpolant} \cite{fry, McCann_interpolation}
\[ \rho(t,x)=S_{t\, \#}\mu,
\]
where $S_t(x)=tT_*(x)+(1-t)x$.  At time zero, $S_0$ is the identity map, so $\rho(0,x)=\mu(x)$, and at time 1, $S_1=T_*$, so $\rho(1,x)=\nu(x)$. The intermediate times $t\in (0,1)$ essentially give a ``video'' which shows how the optimal map deforms one probability density into the other.

We compute three different examples of image interpolation, Figures~\ref{fig:jack_to_horse},  \ref{fig:balls_to_man} and~\ref{fig:monge_to_kantorovich}, using the images in Figure~\ref{fig:images} as our initial and final densities. In Figure~\ref{fig:jack_to_horse}, the starting image is the silhouette of a jack-o'-lantern and the final image is a silhouette of a winged horse.  In Figure~\ref{fig:balls_to_man}, the starting configuration is the union of 4 differently sized discs and the ending configuration is the silhouette of a man holding a sword.   In Figure~\ref{fig:monge_to_kantorovich}, the starting configuration is a portrait of Monge and the final configuration is a photo of Kantorovich.  Note that in each of these examples, the intermediate time interpolations remain sharp and there is almost no diffusion.  In addition, near times 0 and 1, the discontinuities of the optimal map are well localized and resolved.  Figure~\ref{fig:jack_to_horse} is particularly interesting, as one can see how the delicate features of the legs, wings, and head of the horse form over time. 

Even though the examples are large ($1024\times 1024$ pixels) and computed to a high degree of accuracy, the total computation time to find the optimal map and construct the interpolation does not exceed 20 seconds in any case.  To the best of our knowledge,  the size of the examples and the accuracy that we achieved were out of reach for all previous computational optimal transport methods.

\begin{figure}
\includegraphics[width=0.3 \textwidth]{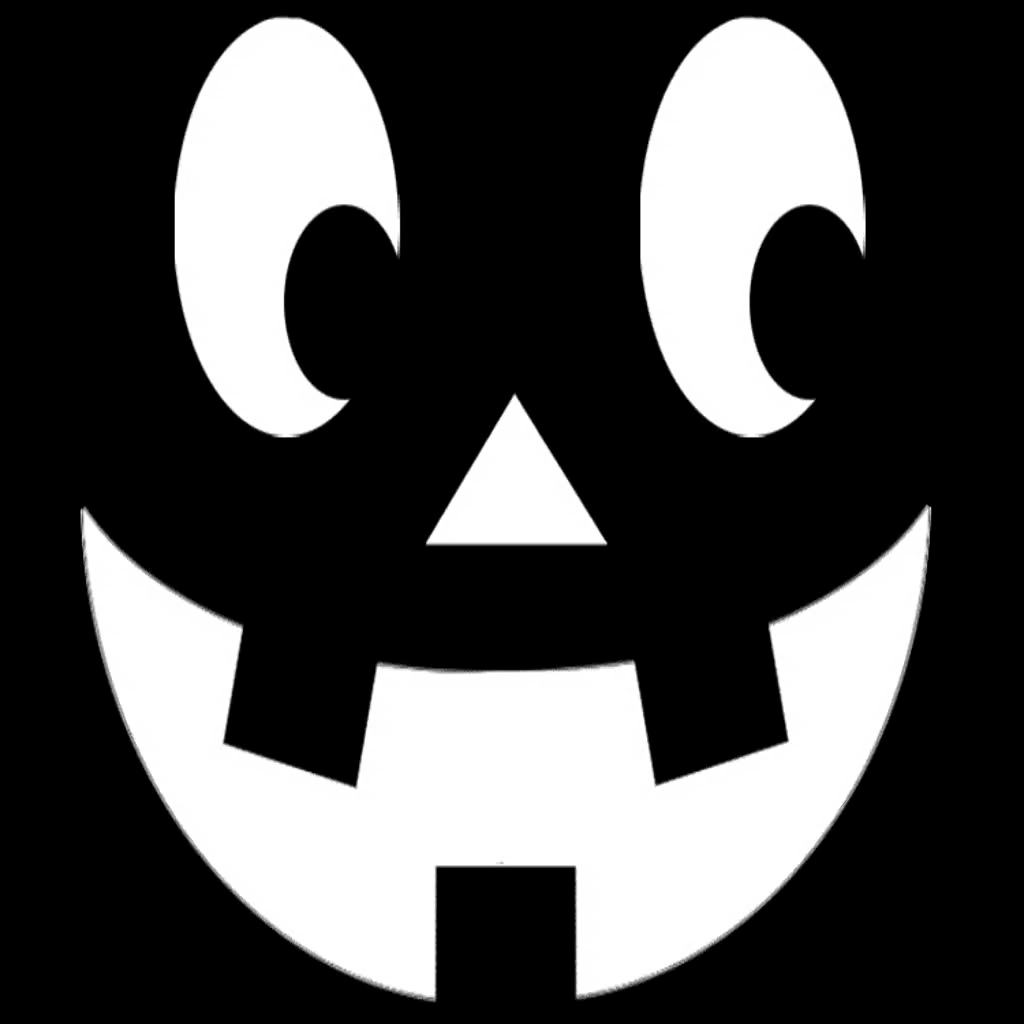}
\includegraphics[width=0.3 \textwidth]{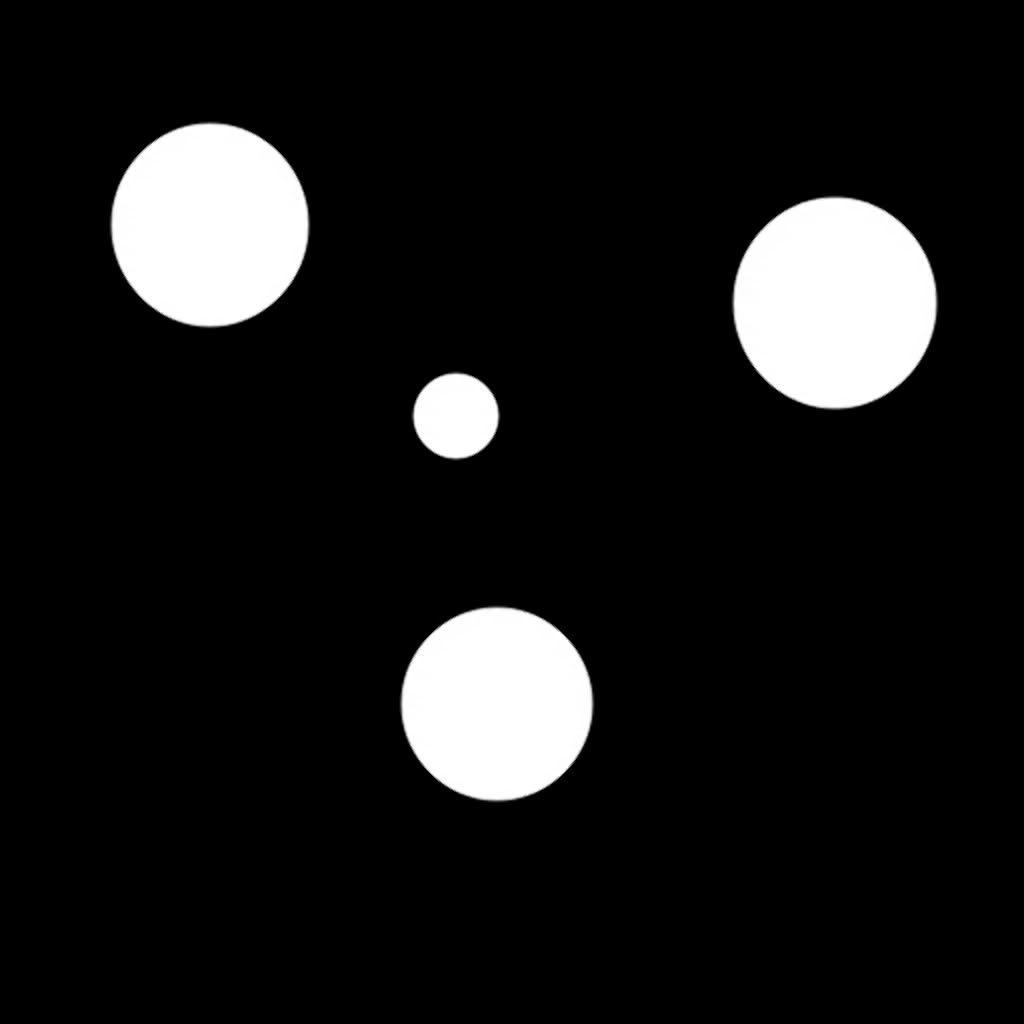}
\includegraphics[width=0.3 \textwidth]{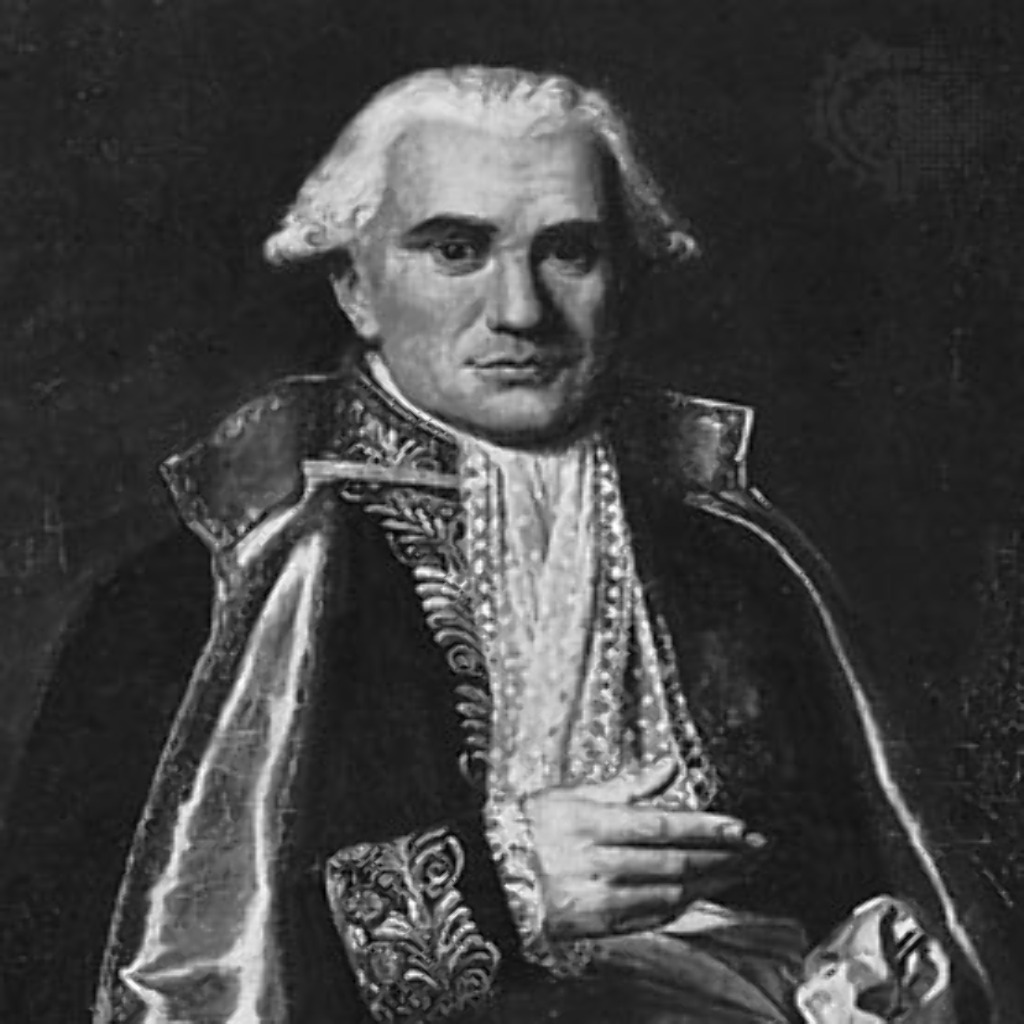}
\includegraphics[width=0.3 \textwidth]{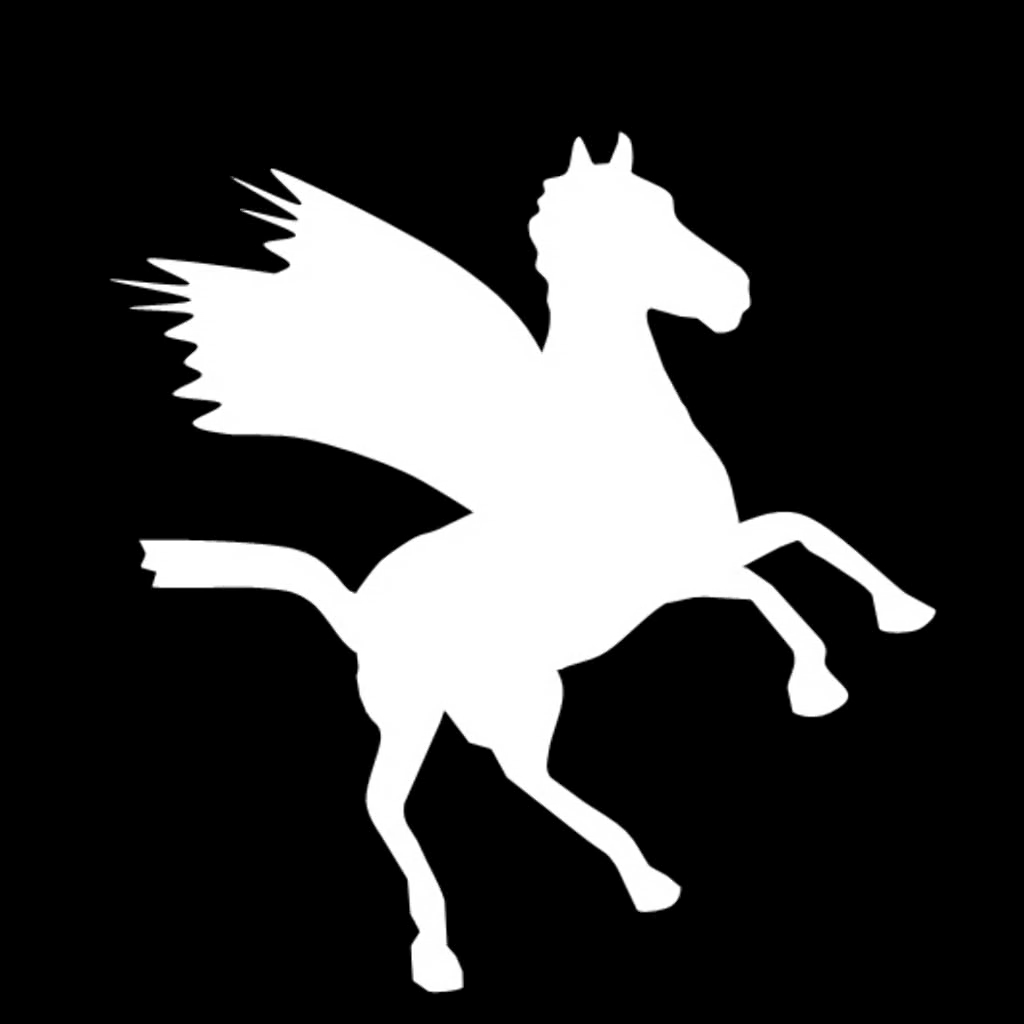}
\includegraphics[width=0.3 \textwidth]{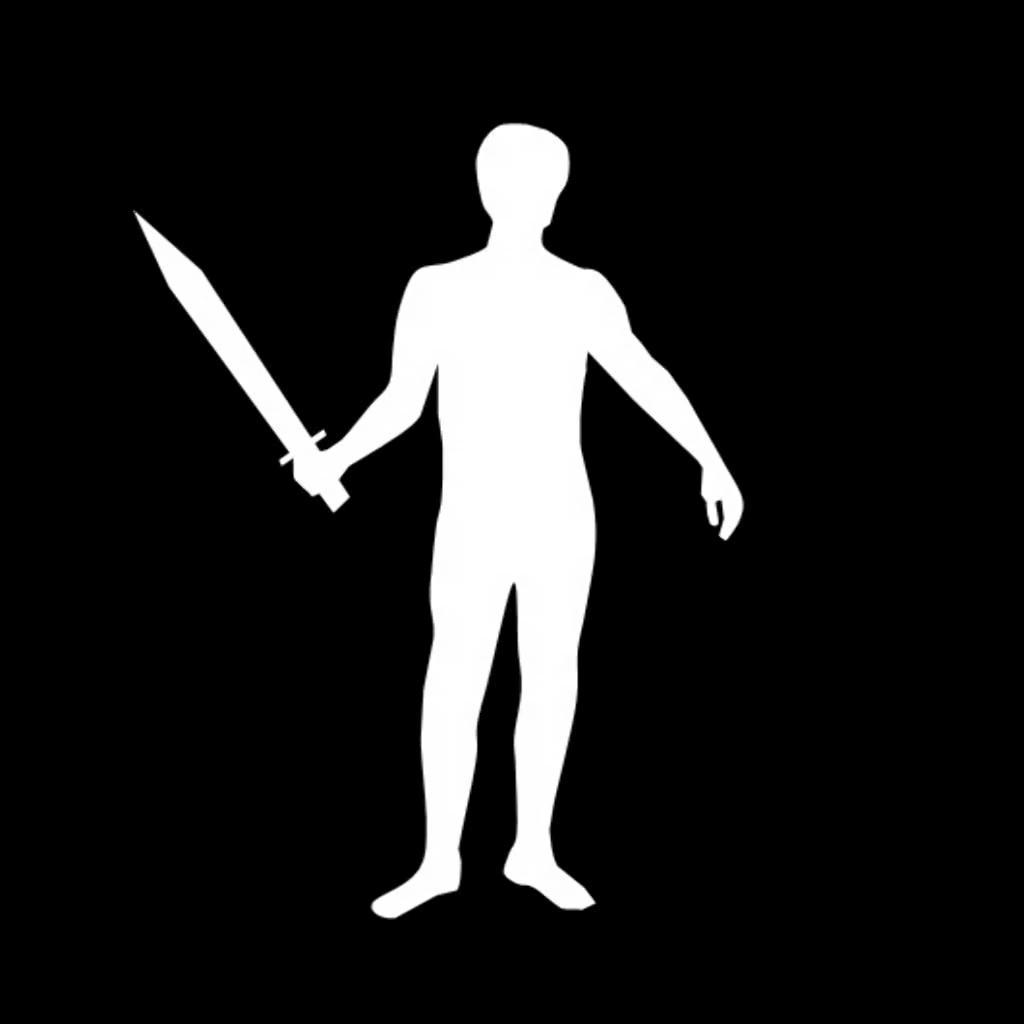}
\includegraphics[width=0.3 \textwidth]{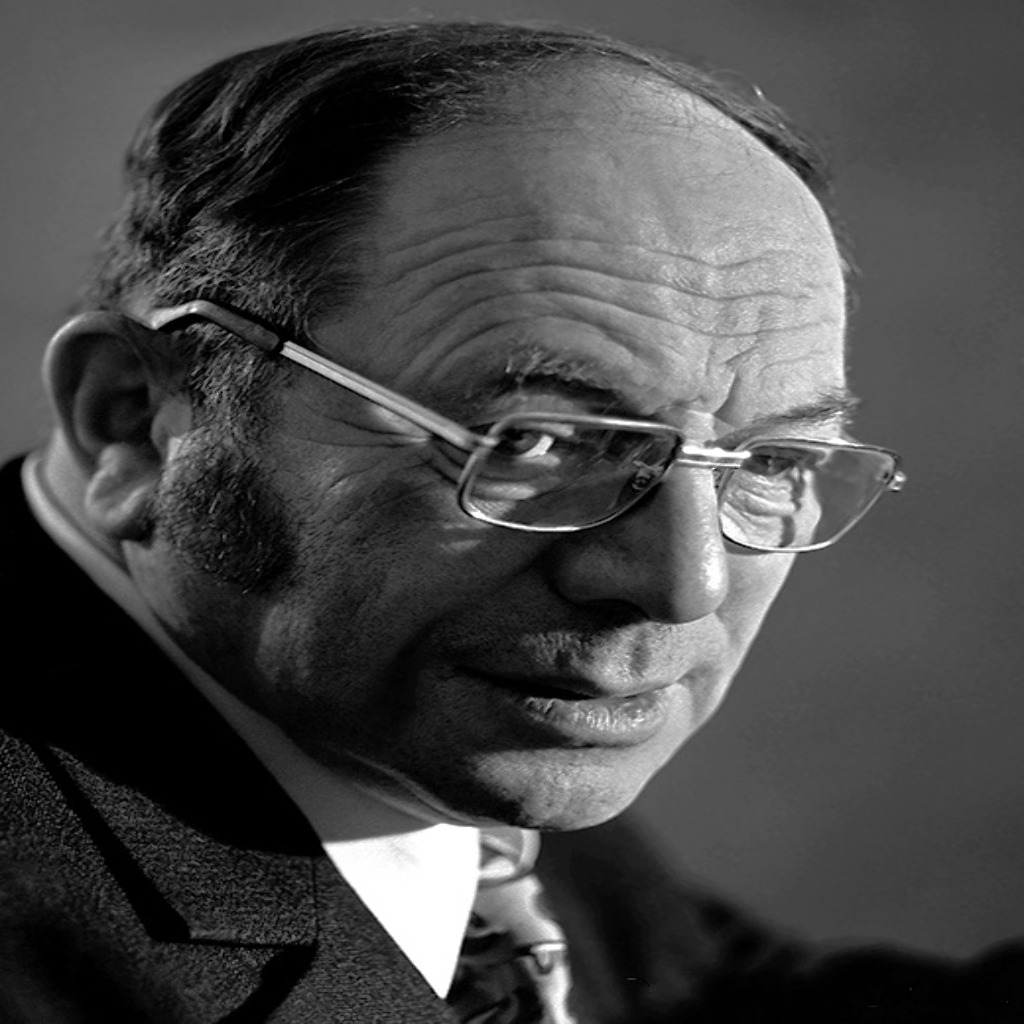}

\caption{Several image examples used to construct interpolations.   \label{fig:images}}
\end{figure}

\begin{figure}
\includegraphics[width=0.3 \textwidth]{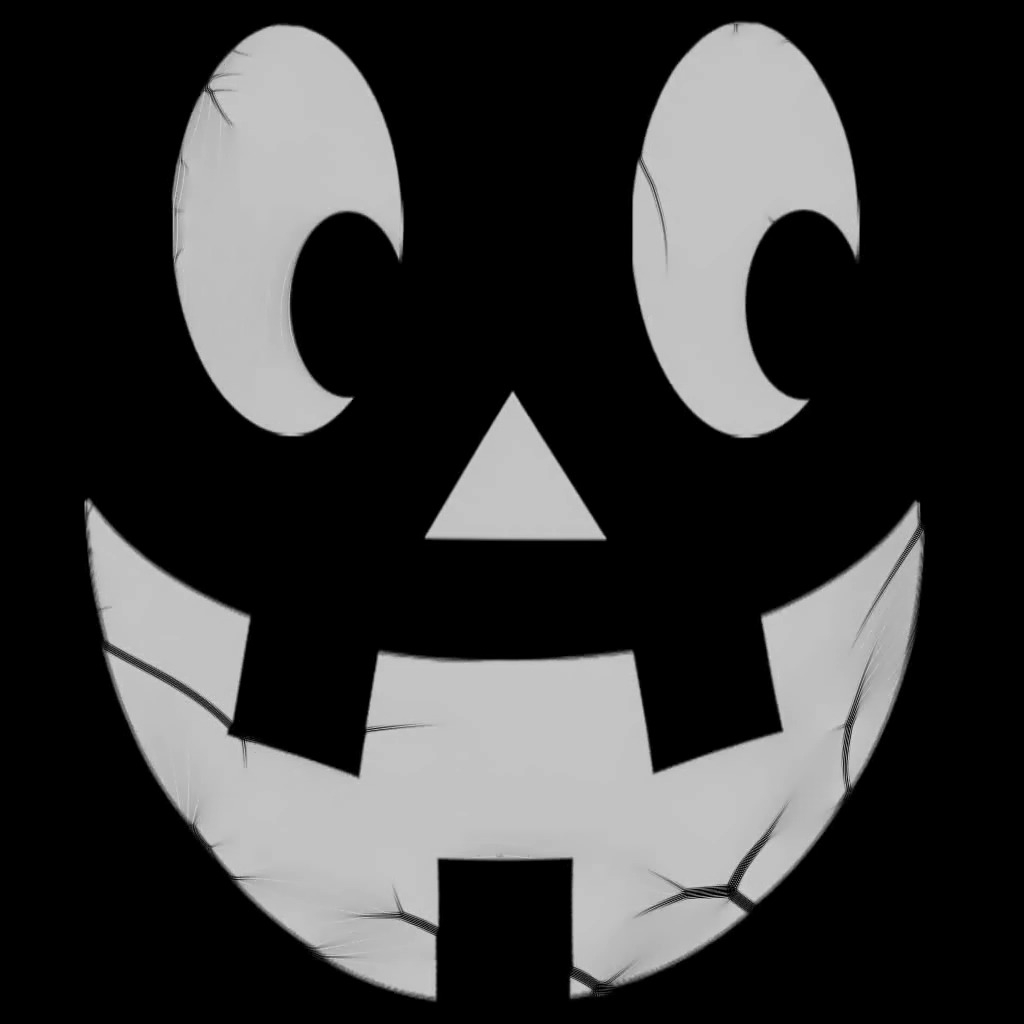}
\includegraphics[width=0.3 \textwidth]{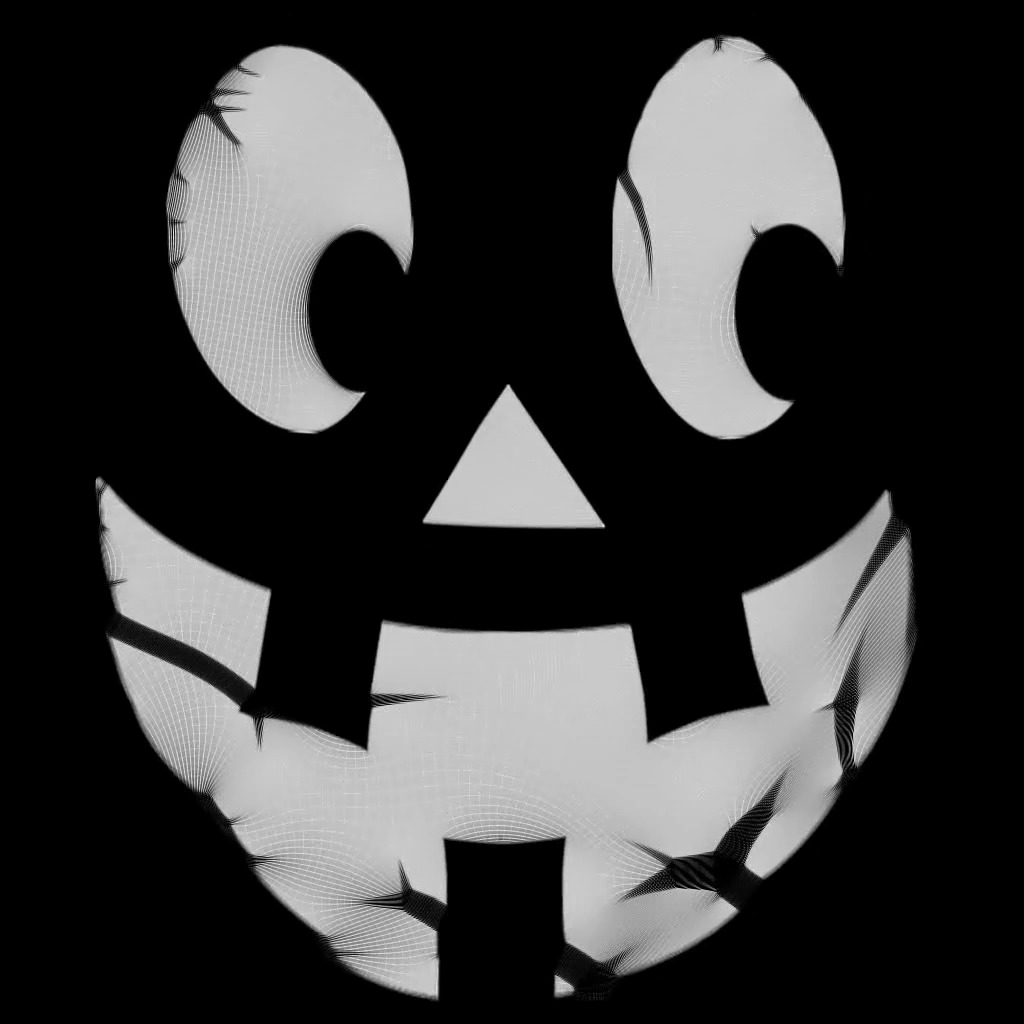}
\includegraphics[width=0.3 \textwidth]{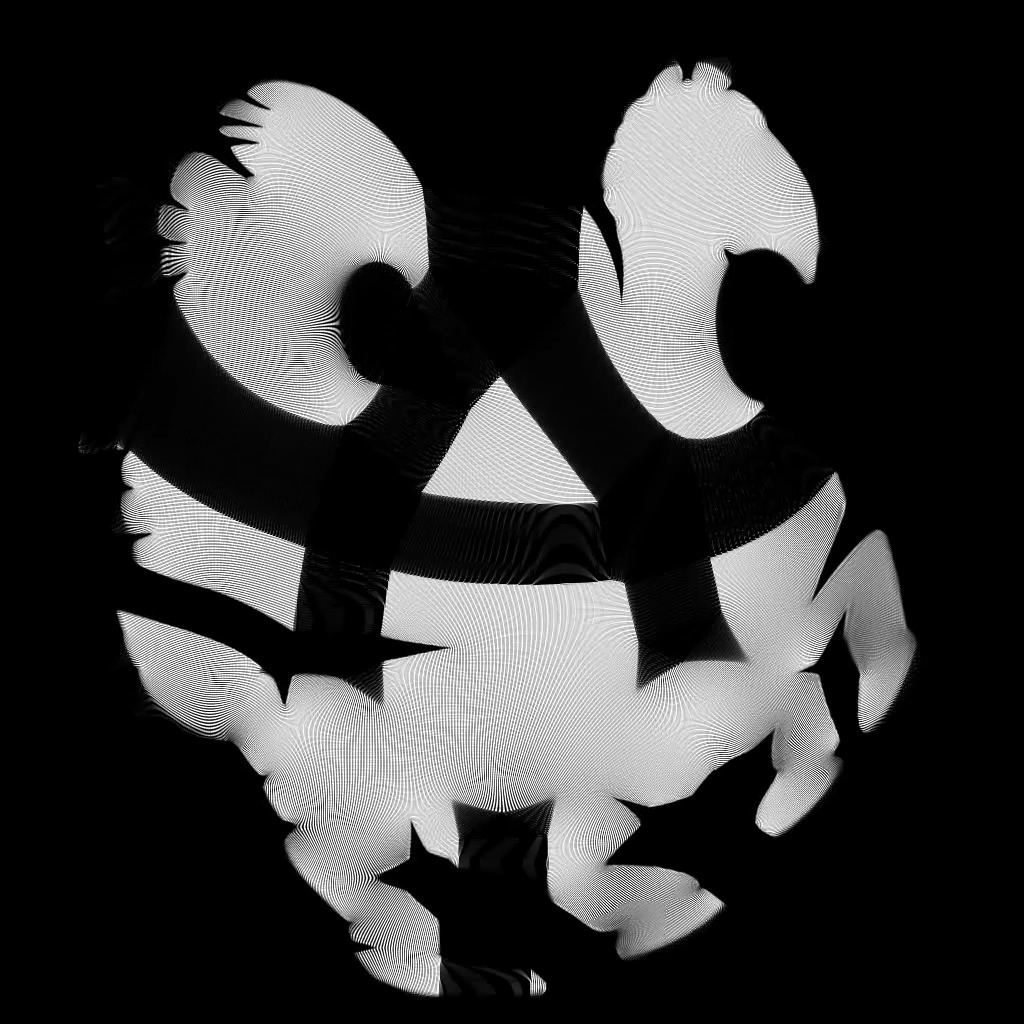}
\includegraphics[width=0.3 \textwidth]{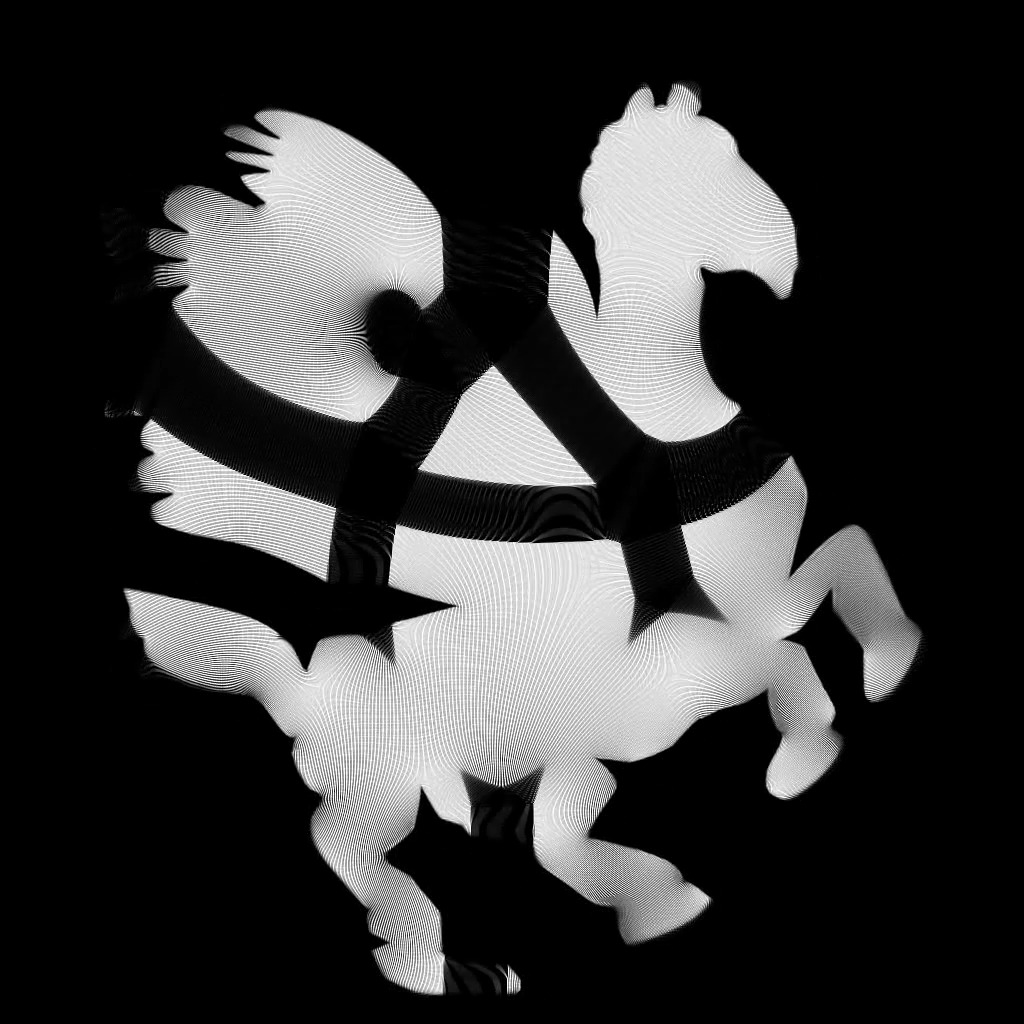}
\includegraphics[width=0.3 \textwidth]{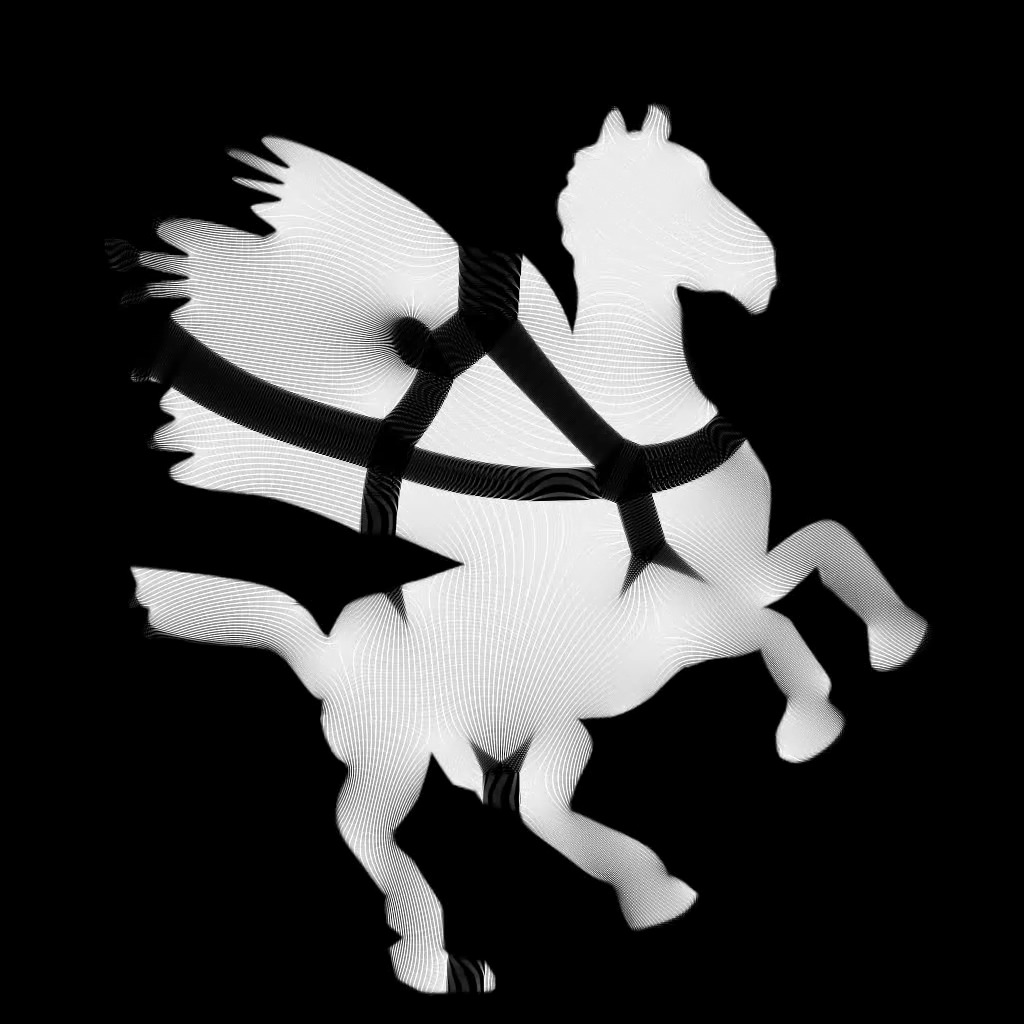}
\includegraphics[width=0.3 \textwidth]{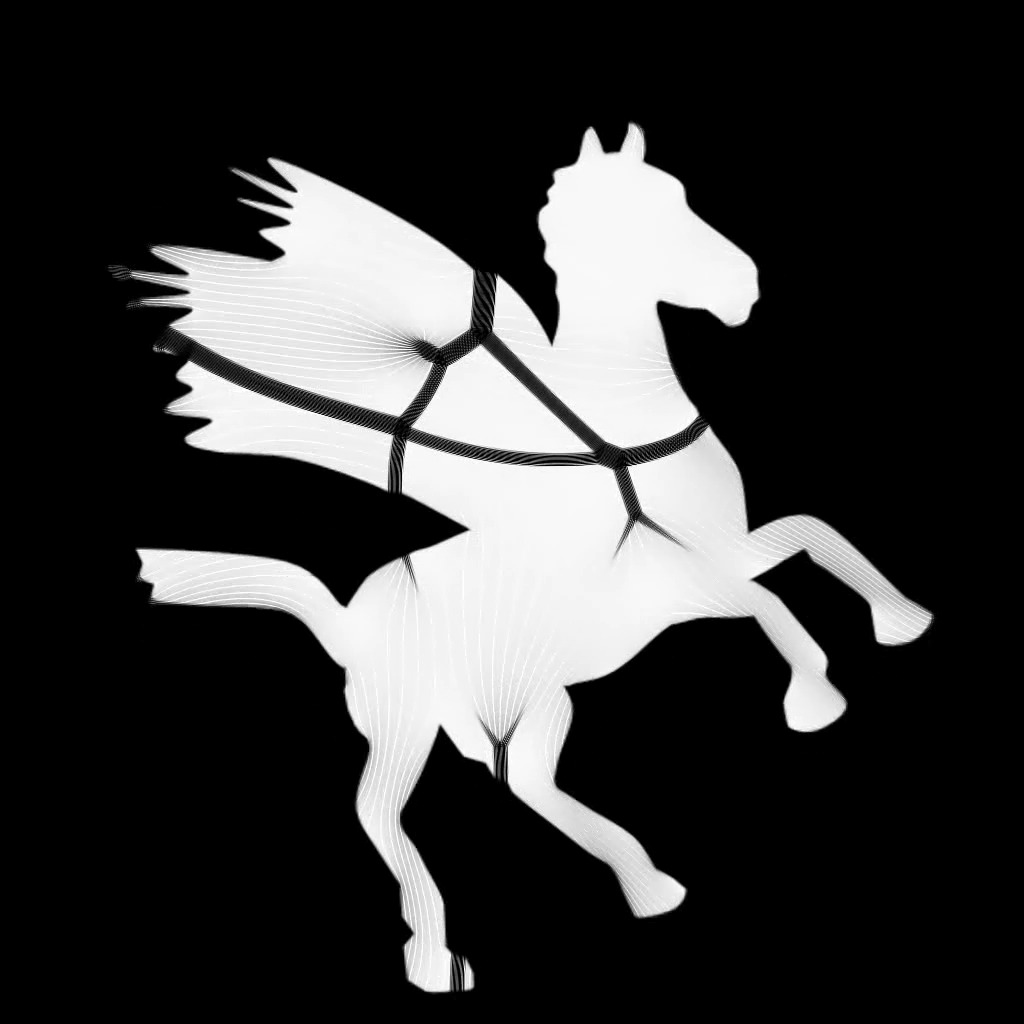}
\caption{Quadratic cost optimal transport between two $1024\times 1024$ pixel images.  The total computation time to compute the optimal map and then compute the interpolation was 20 seconds. \label{fig:jack_to_horse}}
\end{figure}

\begin{figure}
\includegraphics[width=0.3 \textwidth]{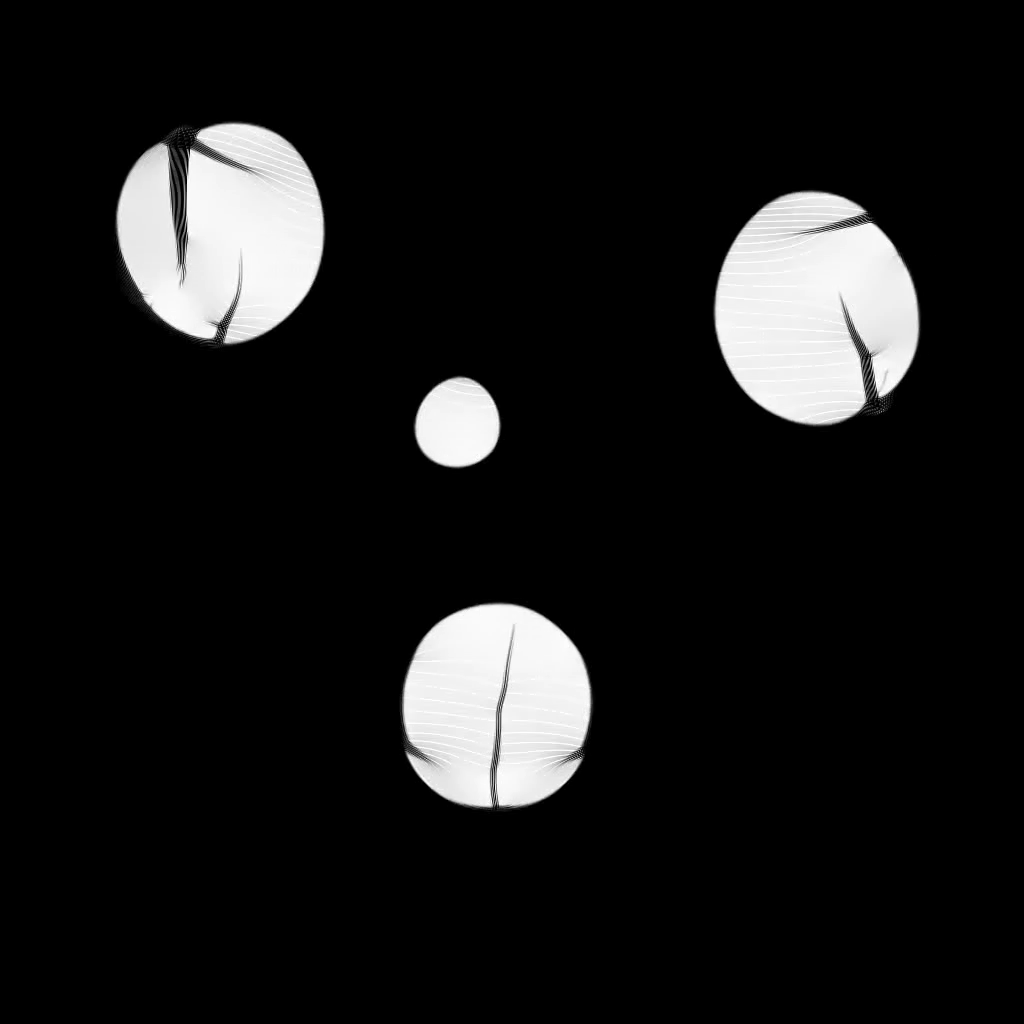}
\includegraphics[width=0.3 \textwidth]{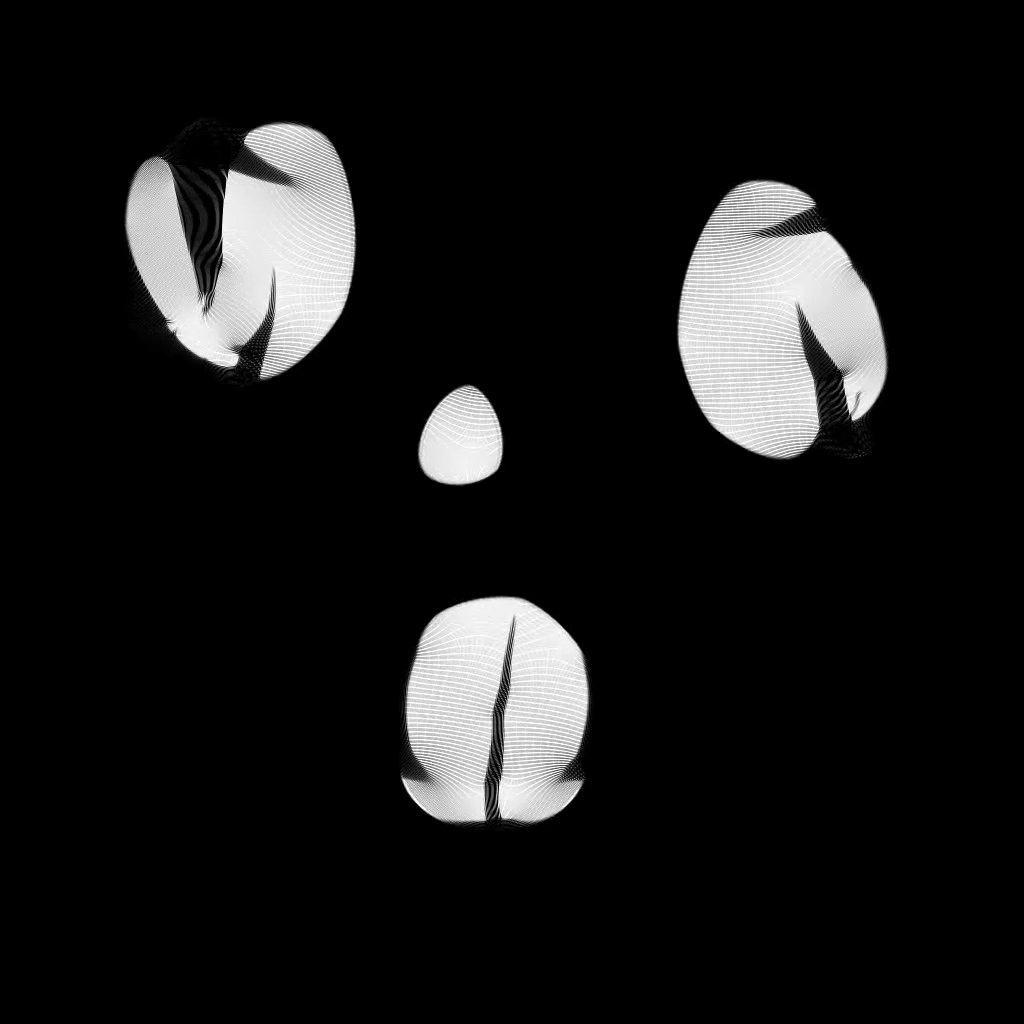}
\includegraphics[width=0.3 \textwidth]{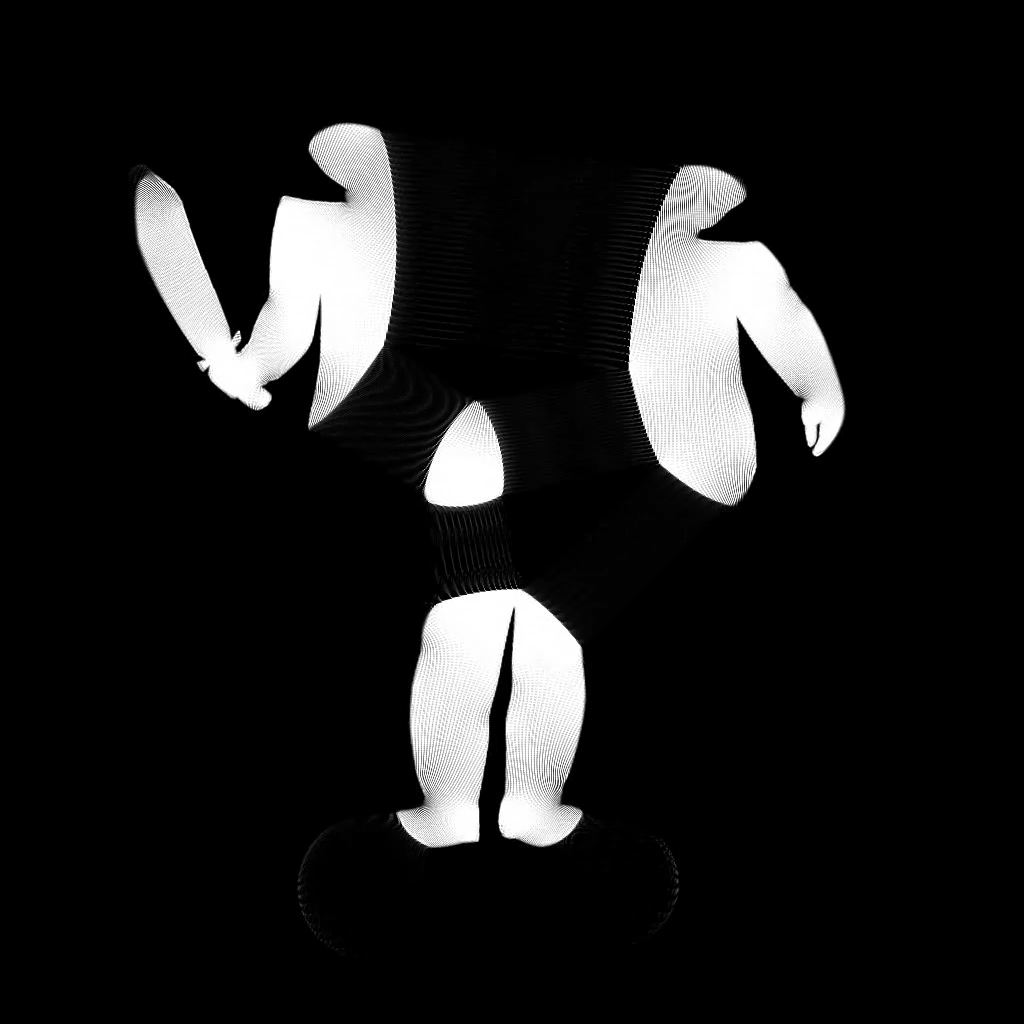}
\includegraphics[width=0.3 \textwidth]{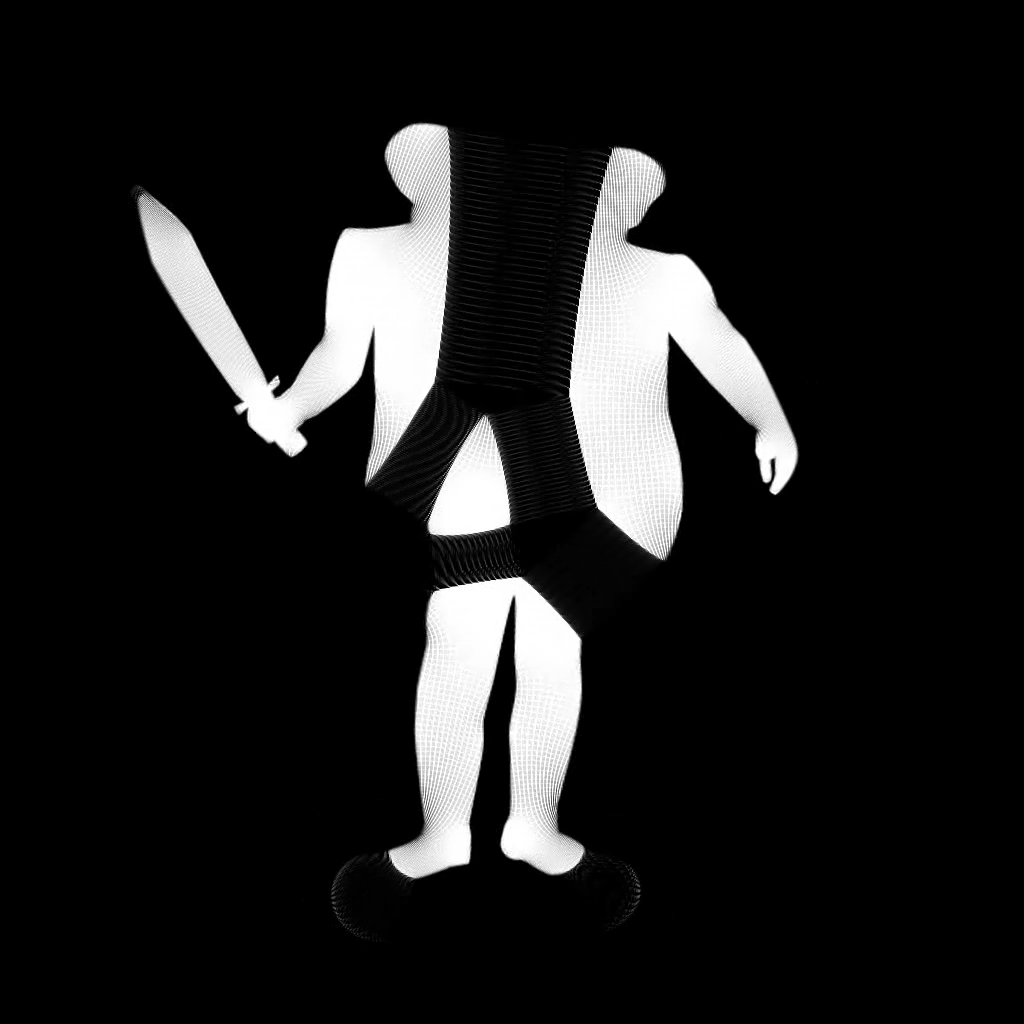}
\includegraphics[width=0.3 \textwidth]{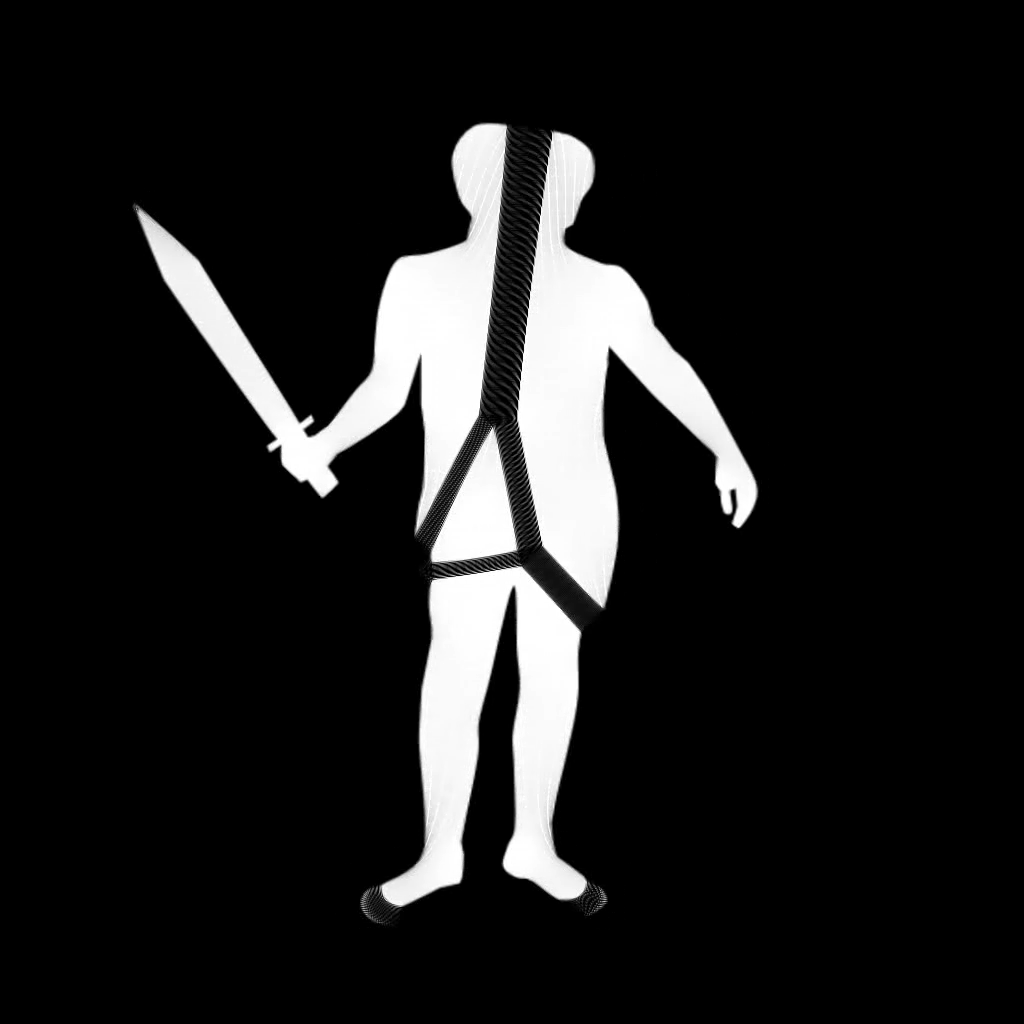}
\includegraphics[width=0.3 \textwidth]{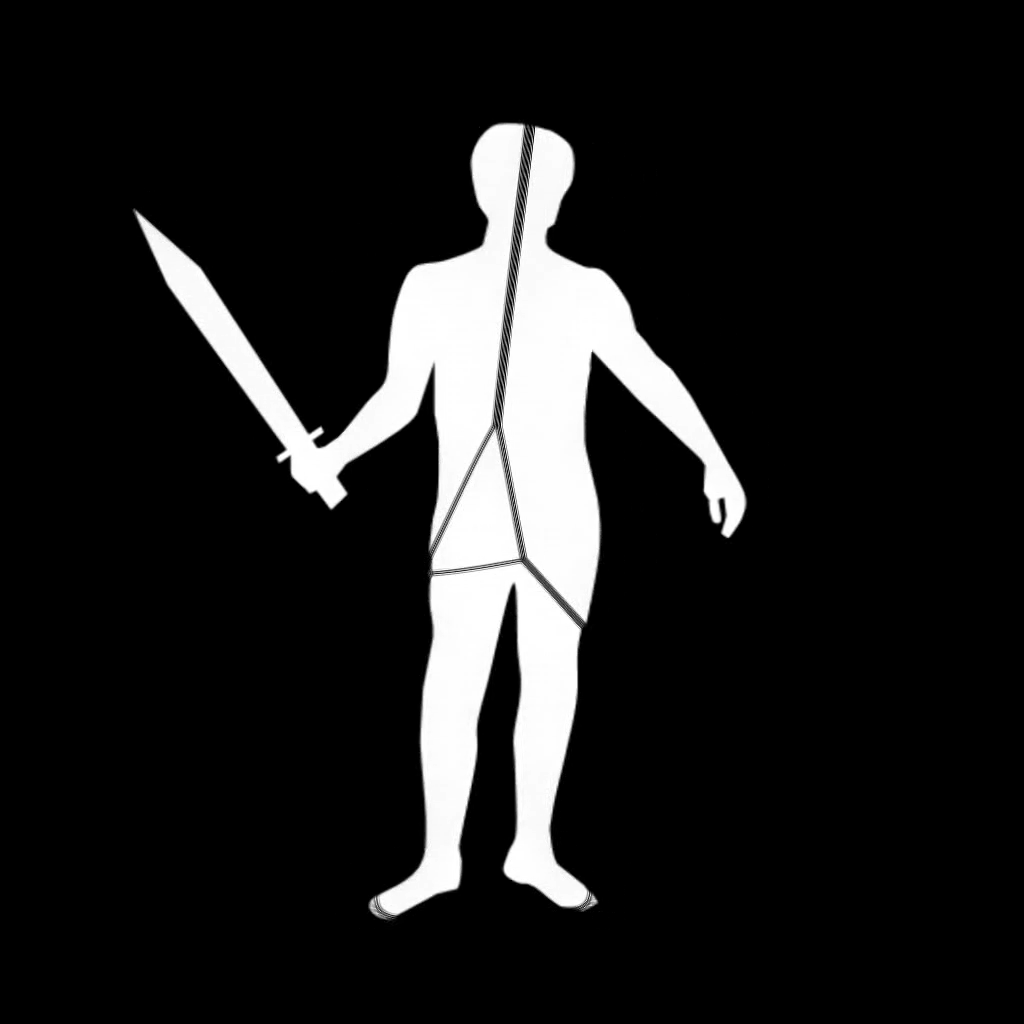}
\caption{Quadratic cost optimal transport between two $1024\times 1024$ pixel images.  The total computation time to compute the optimal map and then compute the interpolation was 18 seconds. \label{fig:balls_to_man} }
\end{figure}

\begin{figure}
\includegraphics[width=0.3 \textwidth]{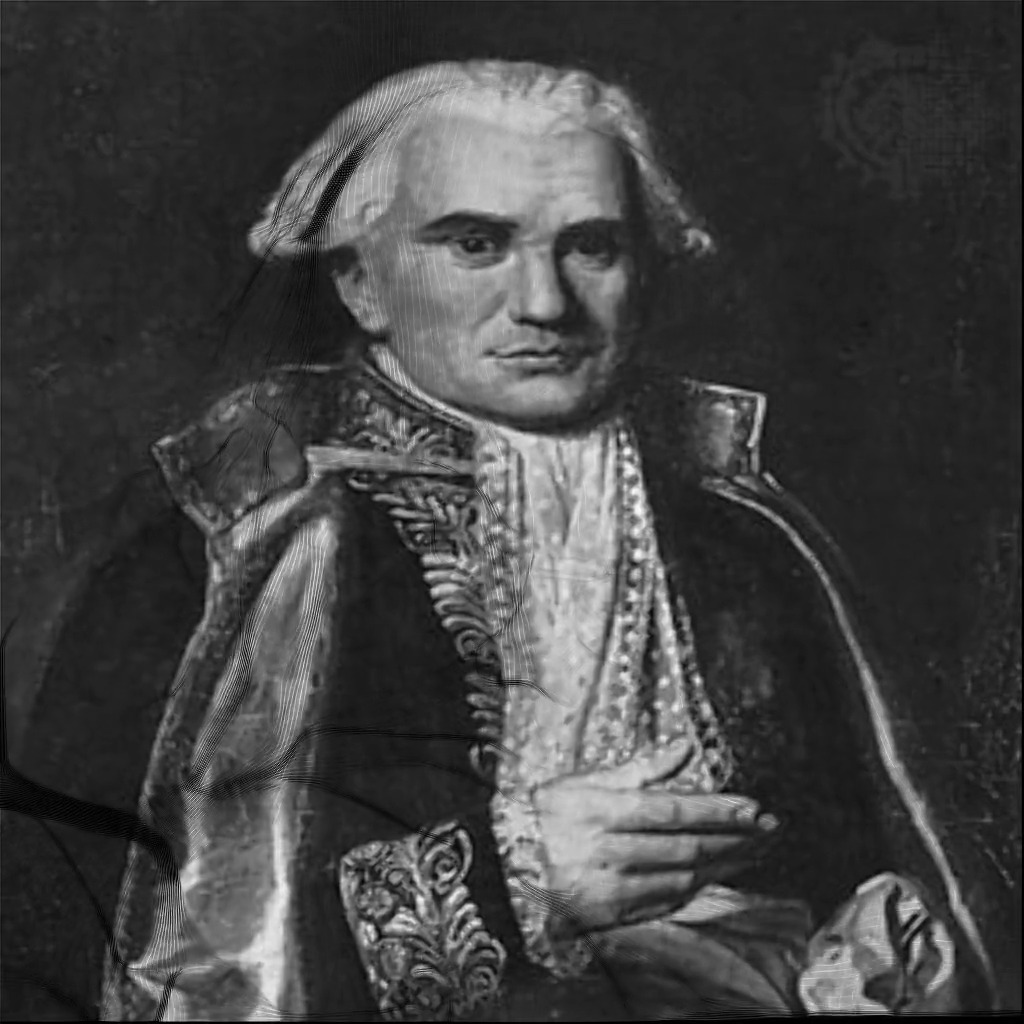}
\includegraphics[width=0.3 \textwidth]{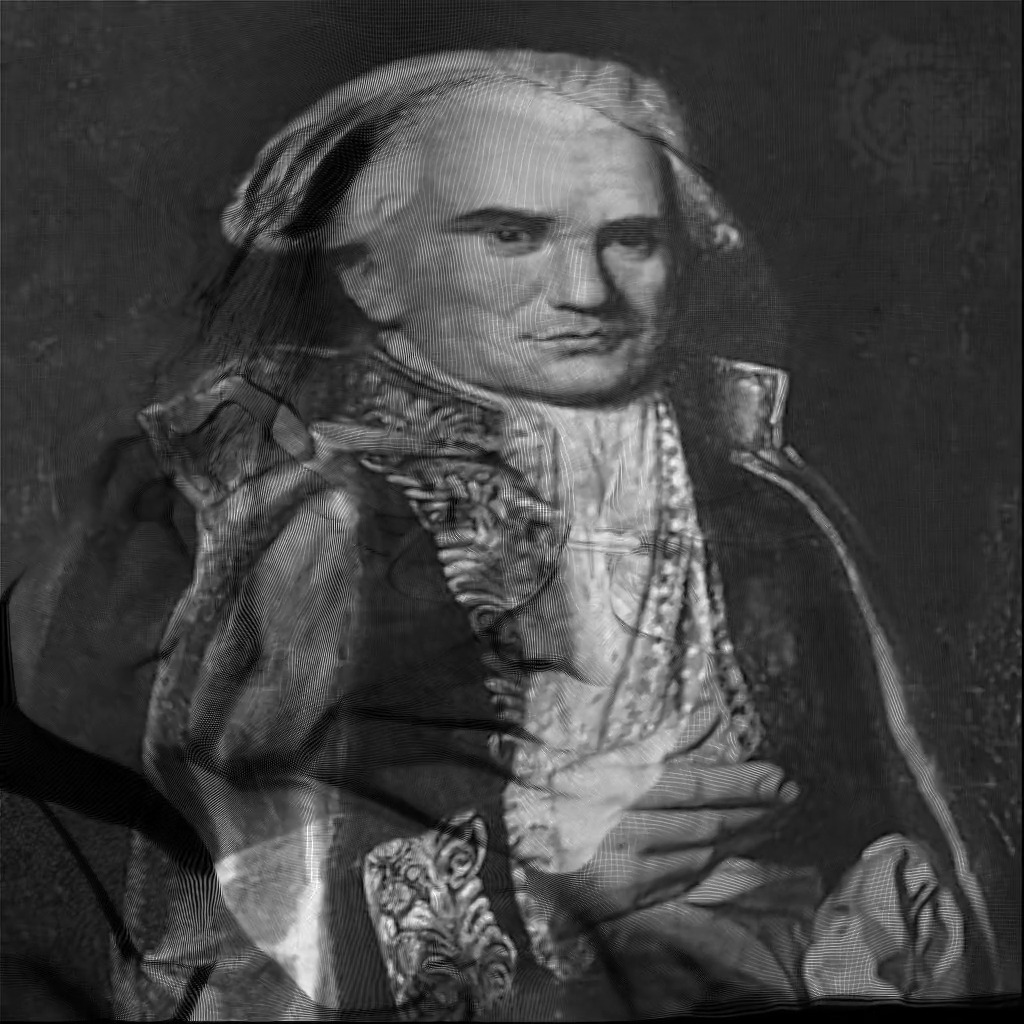}
\includegraphics[width=0.3 \textwidth]{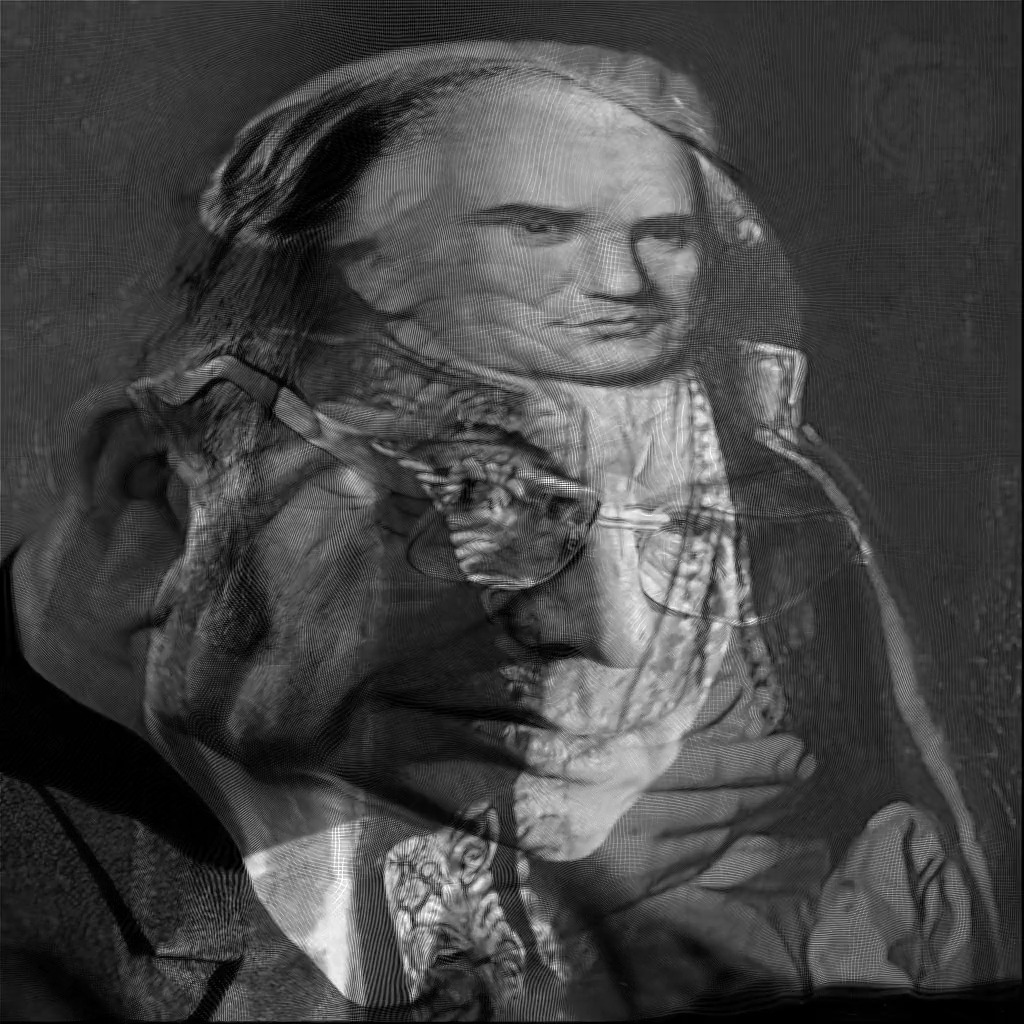}
\includegraphics[width=0.3 \textwidth]{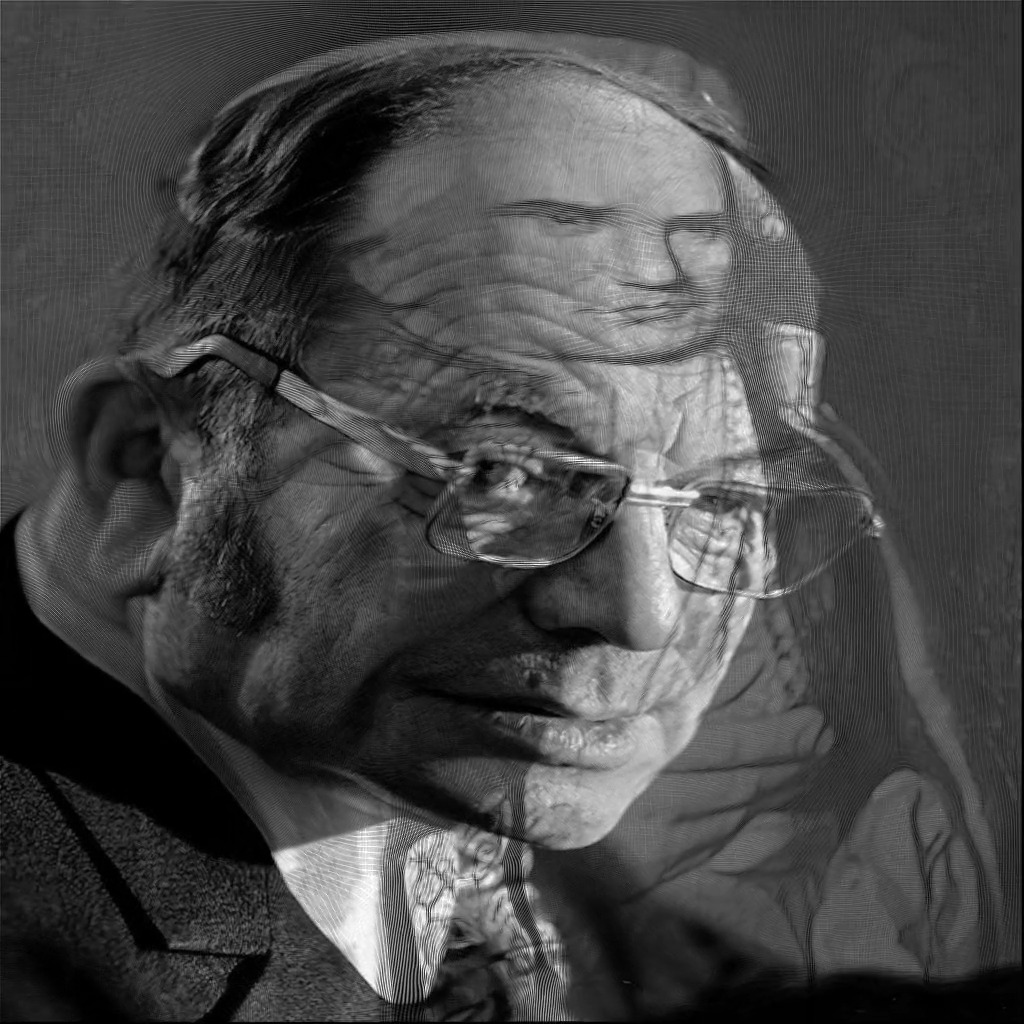}
\includegraphics[width=0.3 \textwidth]{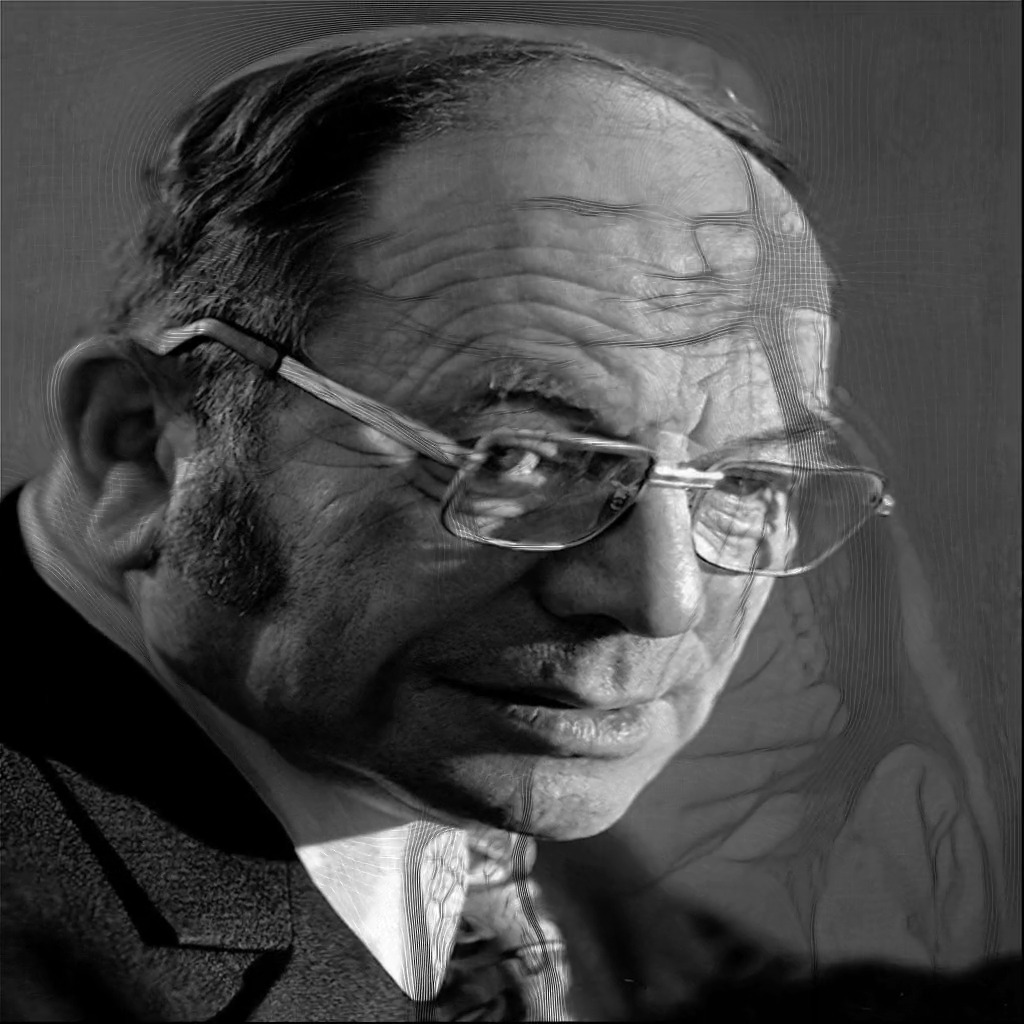}
\includegraphics[width=0.3 \textwidth]{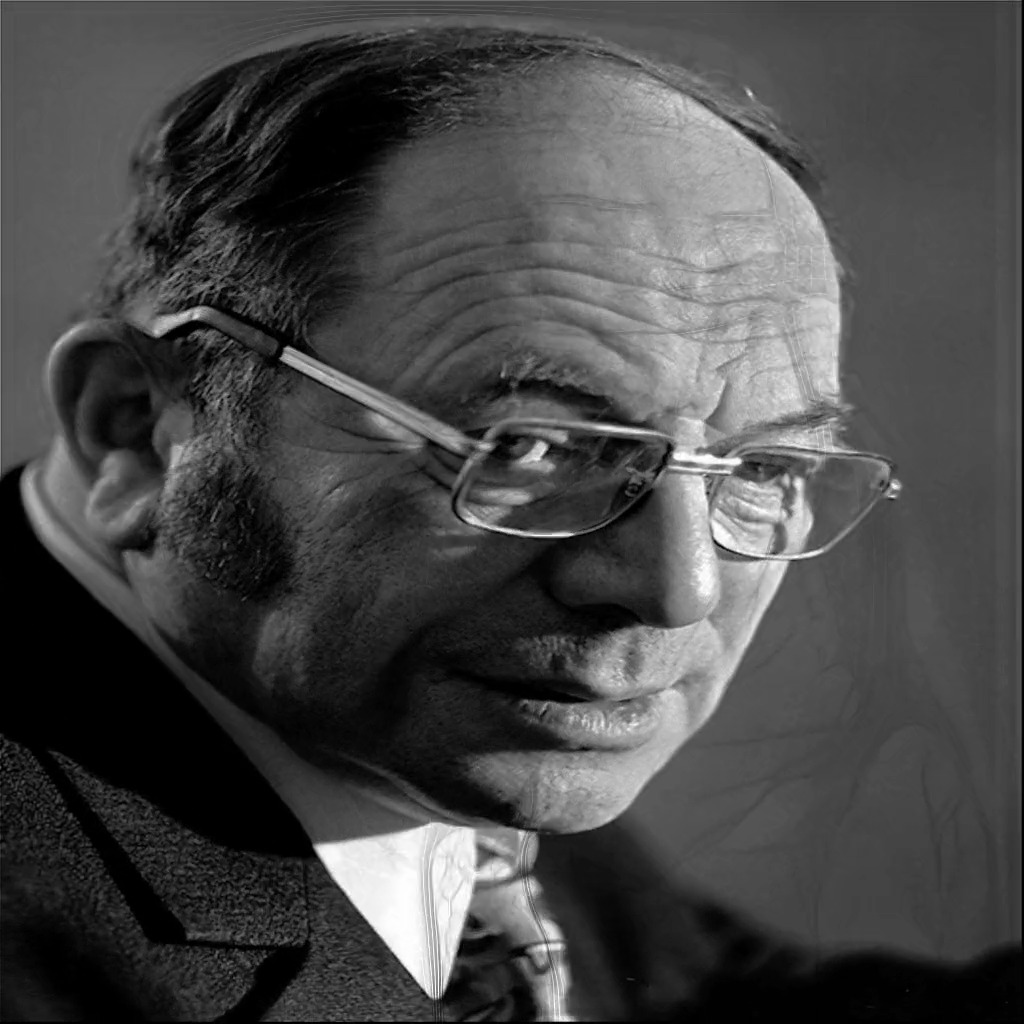}
\caption{Quadratic cost optimal transport between two $1024\times 1024$ pixel images.  The total computation time to compute the optimal map and then compute the interpolation was 20 seconds. \label{fig:monge_to_kantorovich} }
\end{figure}

\subsubsection*{Optimal transport with general costs}

In the final set of experiments, we consider optimal transport with more general cost functions.  As we noted earlier, the $c$-transform corresponding to any cost of the form $c(x,y)=\sum_{i=1}^d h_i(y_i-x_i)$ for $h_i$ strictly convex can be computed efficiently using the fast $c$-transform. The most interesting costs of this form are sums of $p^{th}$ powers, i.e.
\begin{equation}\label{eq:p_costs}c(x,y)=\sum_{i=1}^d \frac{1}{p_i}|y_i-x_i|^{p_i},\end{equation}
where $p_i>1$ for each coordinate index $i$.   These costs are particularly interesting when the powers $p_i$ are different for each $i$.  Indeed, in this case the optimal map can change considerably as the values of $p_i$ are varied.  

We illustrate this phenomenon in two dimensions by choosing $\mu$ to be the union of two discs of radius $\frac{1}{8}$ centered at $(\frac{1}{4}, \frac{1}{4})$ and $(\frac{3}{4}, \frac{3}{4})$ respectively, and $\nu$ to be the union of two discs of radius $\frac{1}{8}$ centered at $(\frac{1}{4}, \frac{3}{4})$ and $(\frac{3}{4}, \frac{1}{4})$ respectively:
\begin{center}
\begin{tikzpicture}[scale=1.5]
\fill[black] (0,0) rectangle (1,1);
\fill[white] (1/4,1/4) circle (1/8);
\fill[white] (3/4,3/4) circle (1/8);
\draw (1/2,-0.2) node {$\mu$};
\tikzset{shift={(1.2,0)}}
\fill[black] (0,0) rectangle (1,1);
\fill[white] (1/4,3/4) circle (1/8);
\fill[white] (3/4,1/4) circle (1/8);
\draw (1/2,-0.2) node {$\nu$};
\end{tikzpicture}
\end{center}
We then consider the optimal transport trajectory between $\mu$ and $\nu$ for several different choices of $p_1$ and $p_2$ (c.f. Figure \ref{fig:p_costs}). 

\begin{figure}
\includegraphics[width=0.225 \textwidth]{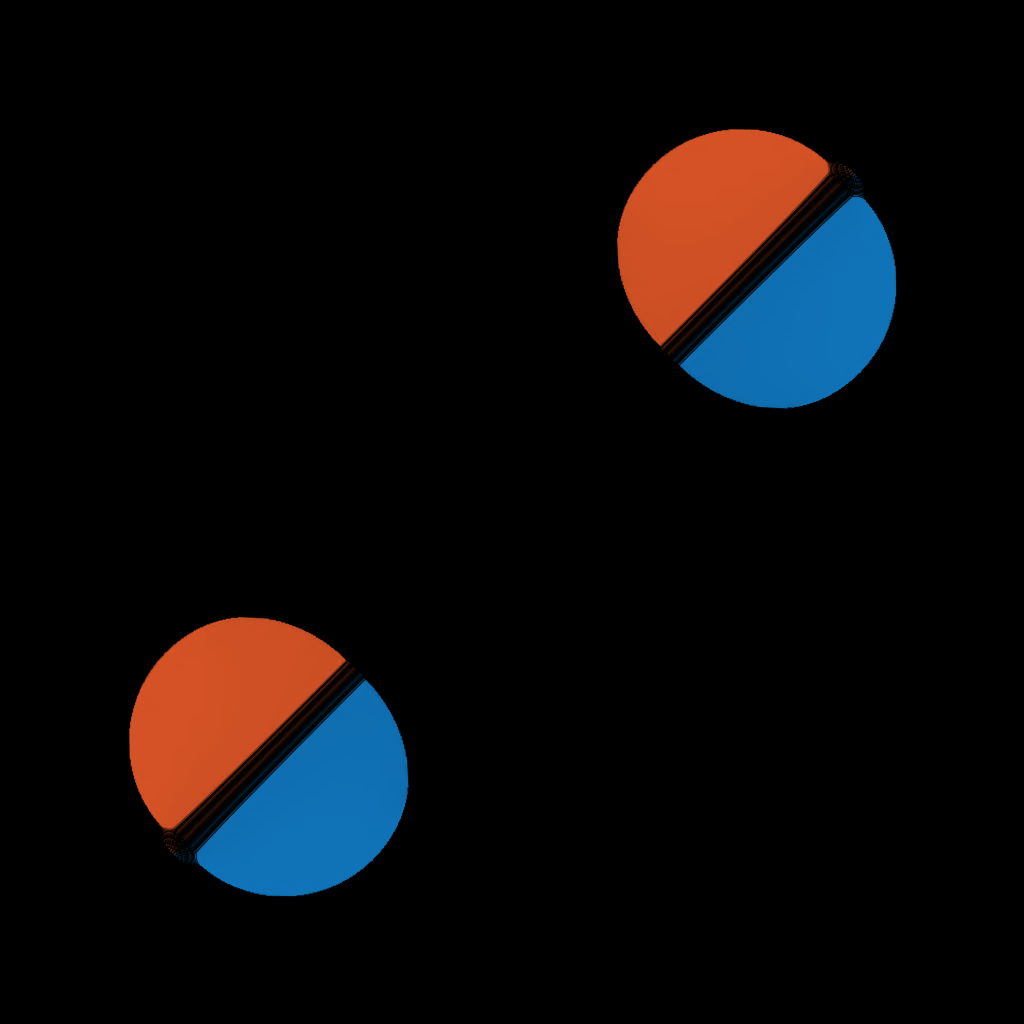}
\includegraphics[width=0.225 \textwidth]{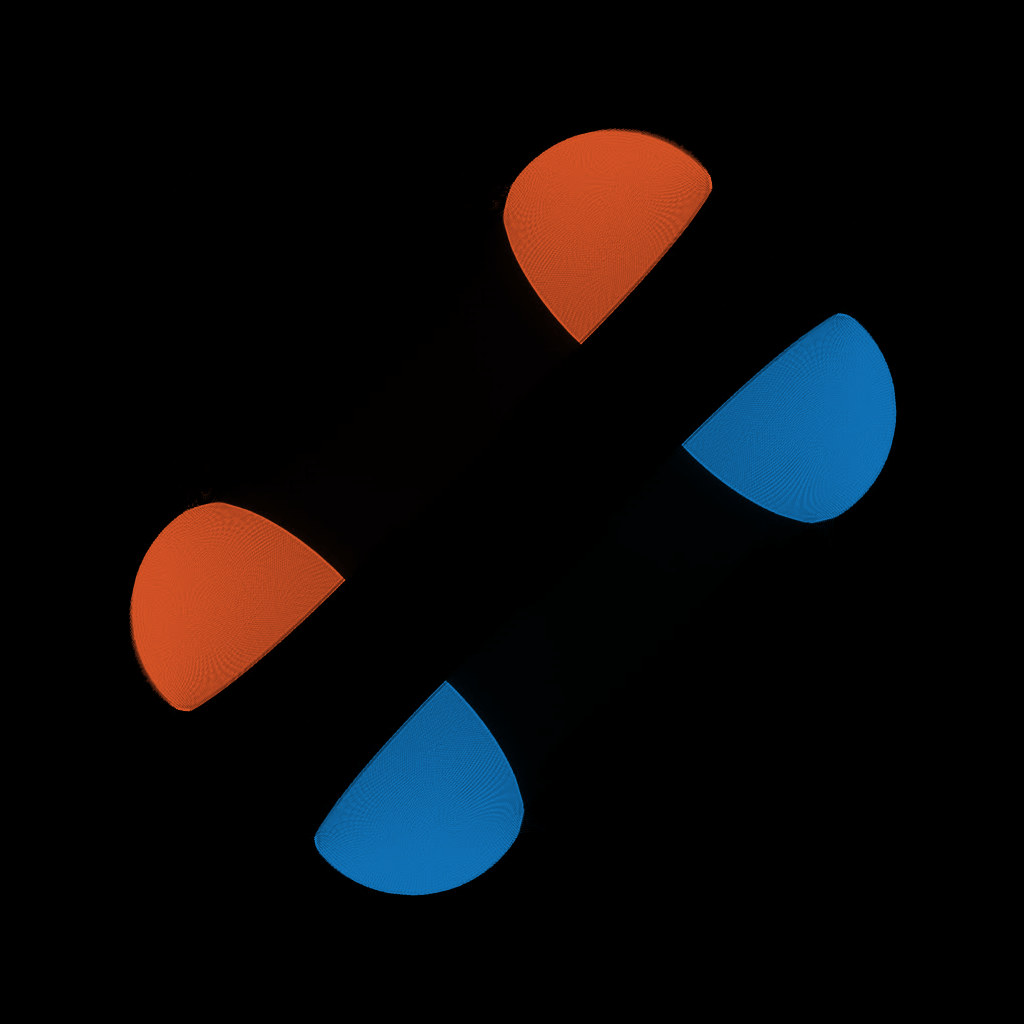}
\includegraphics[width=0.225 \textwidth]{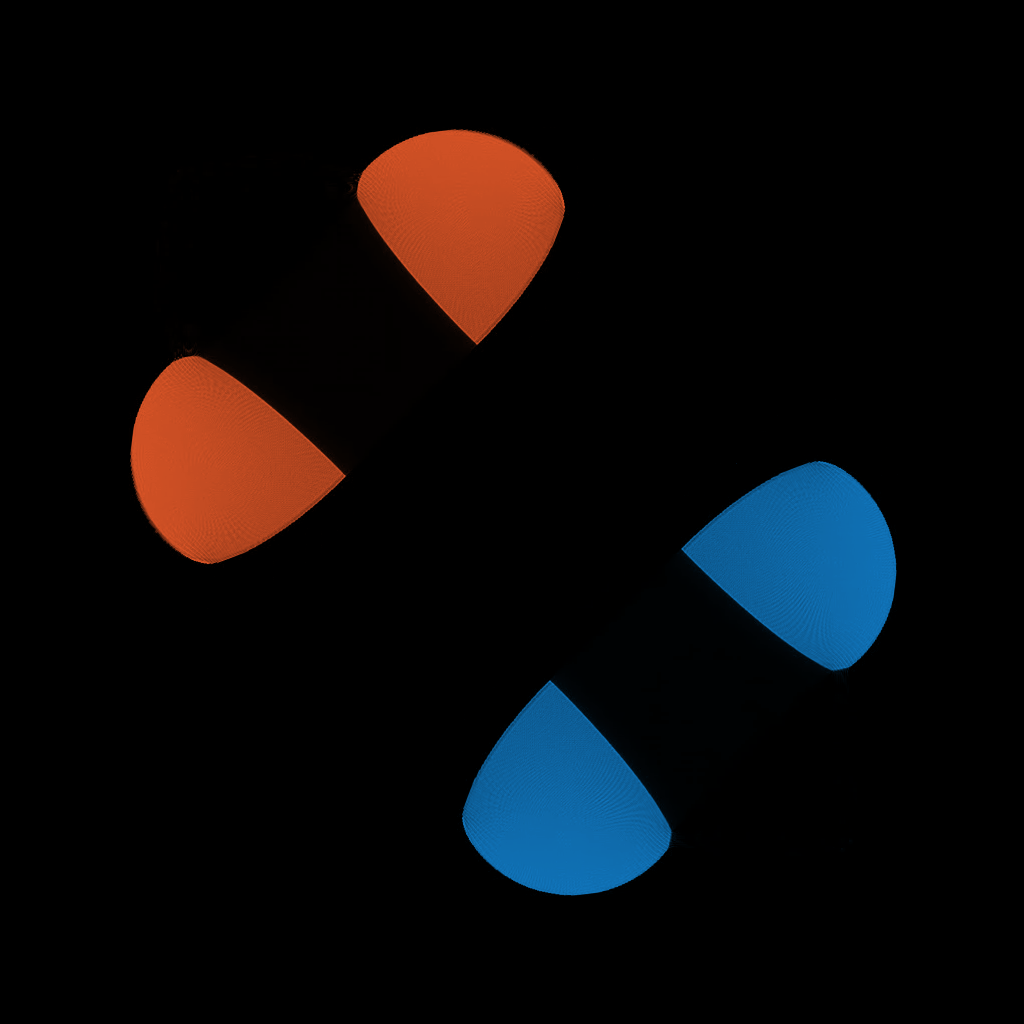}
\includegraphics[width=0.225 \textwidth]{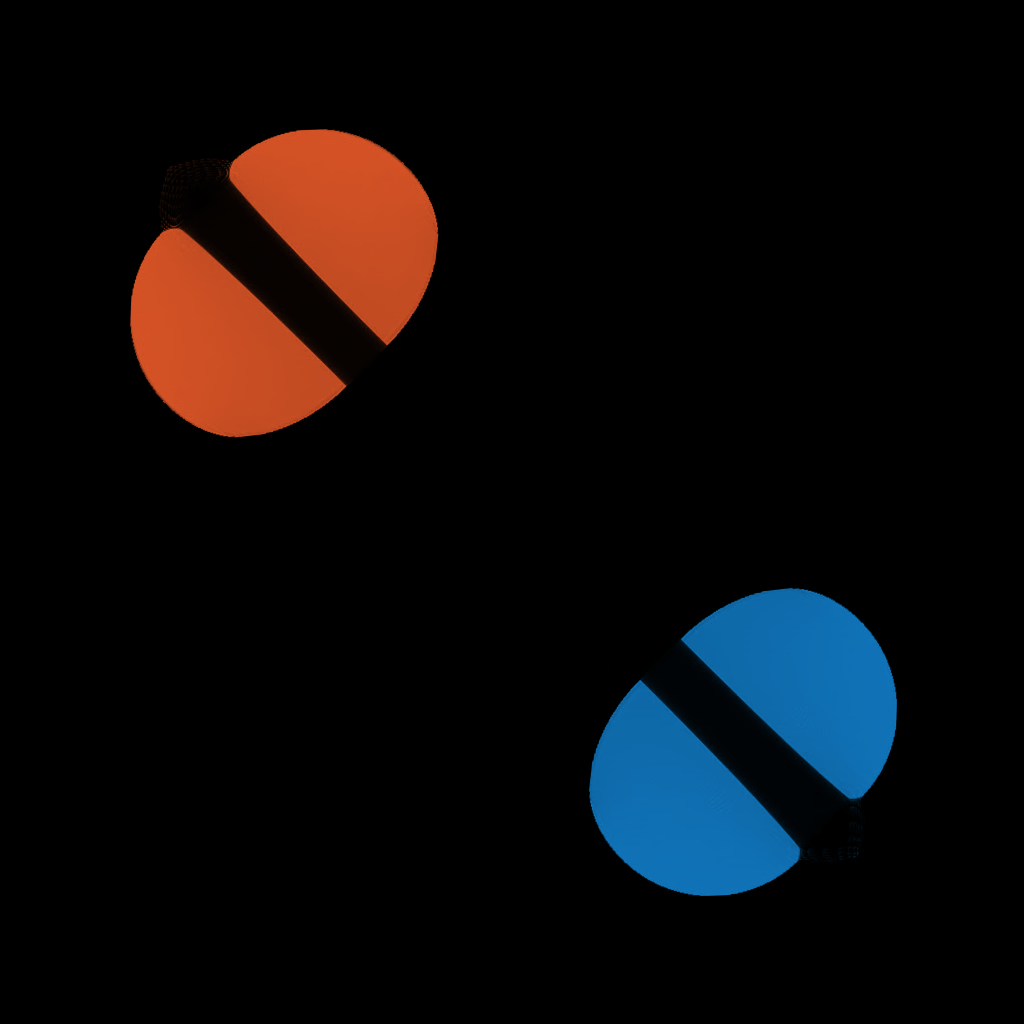}
\includegraphics[width=0.225 \textwidth]{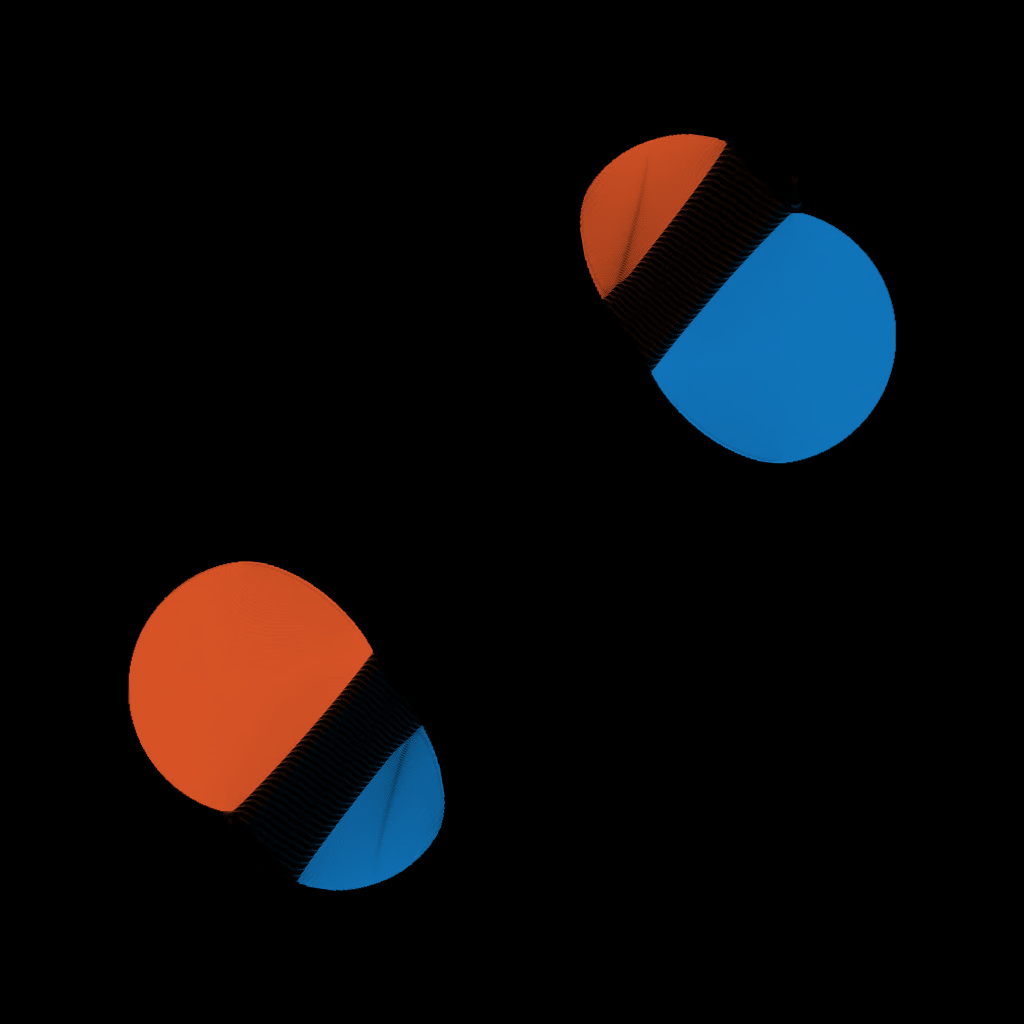}
\includegraphics[width=0.225 \textwidth]{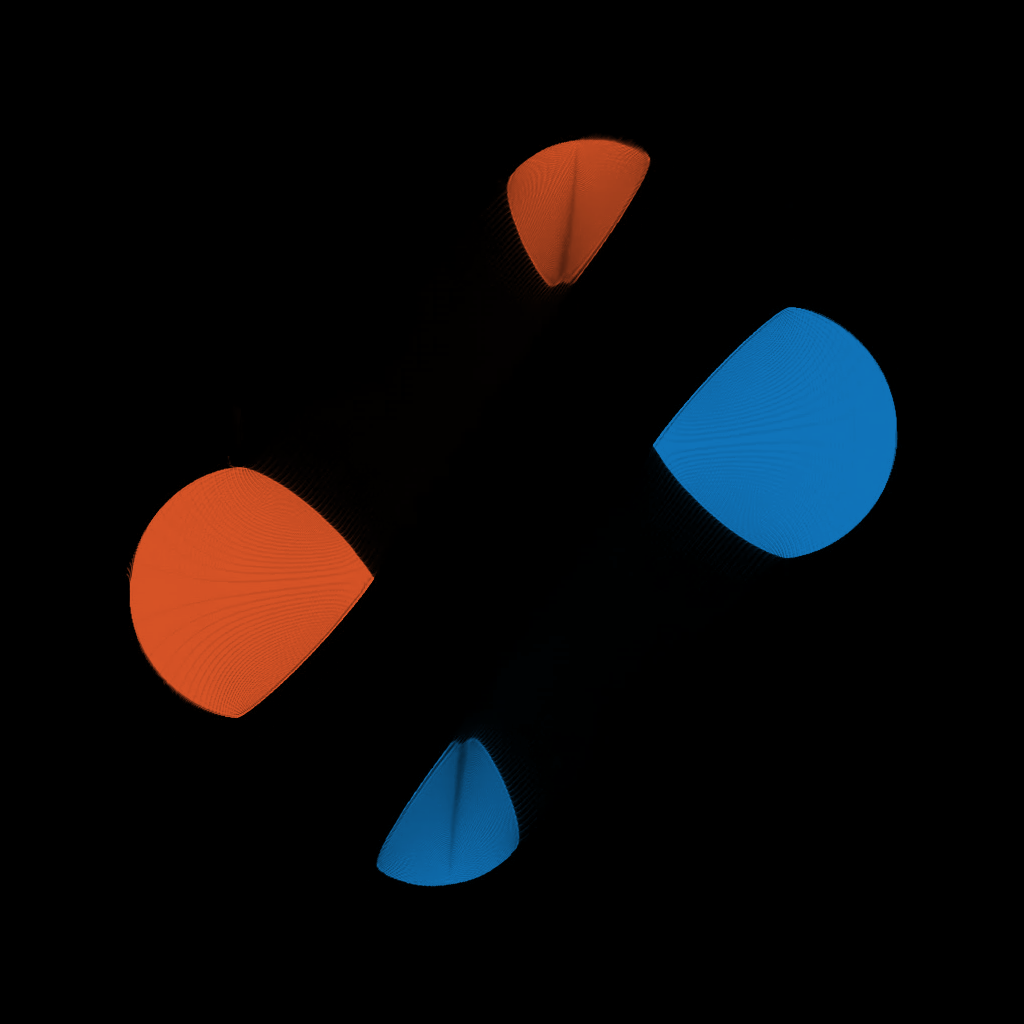}
\includegraphics[width=0.225 \textwidth]{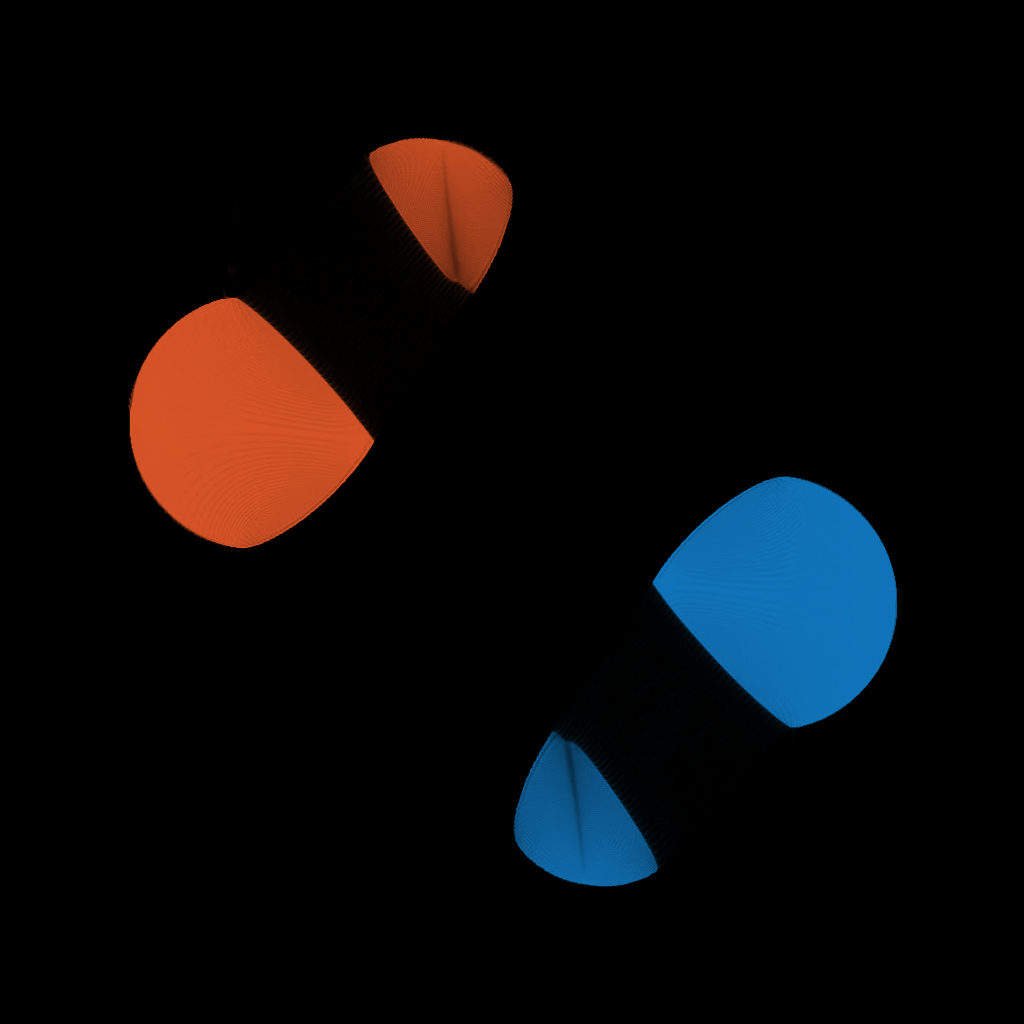}
\includegraphics[width=0.225 \textwidth]{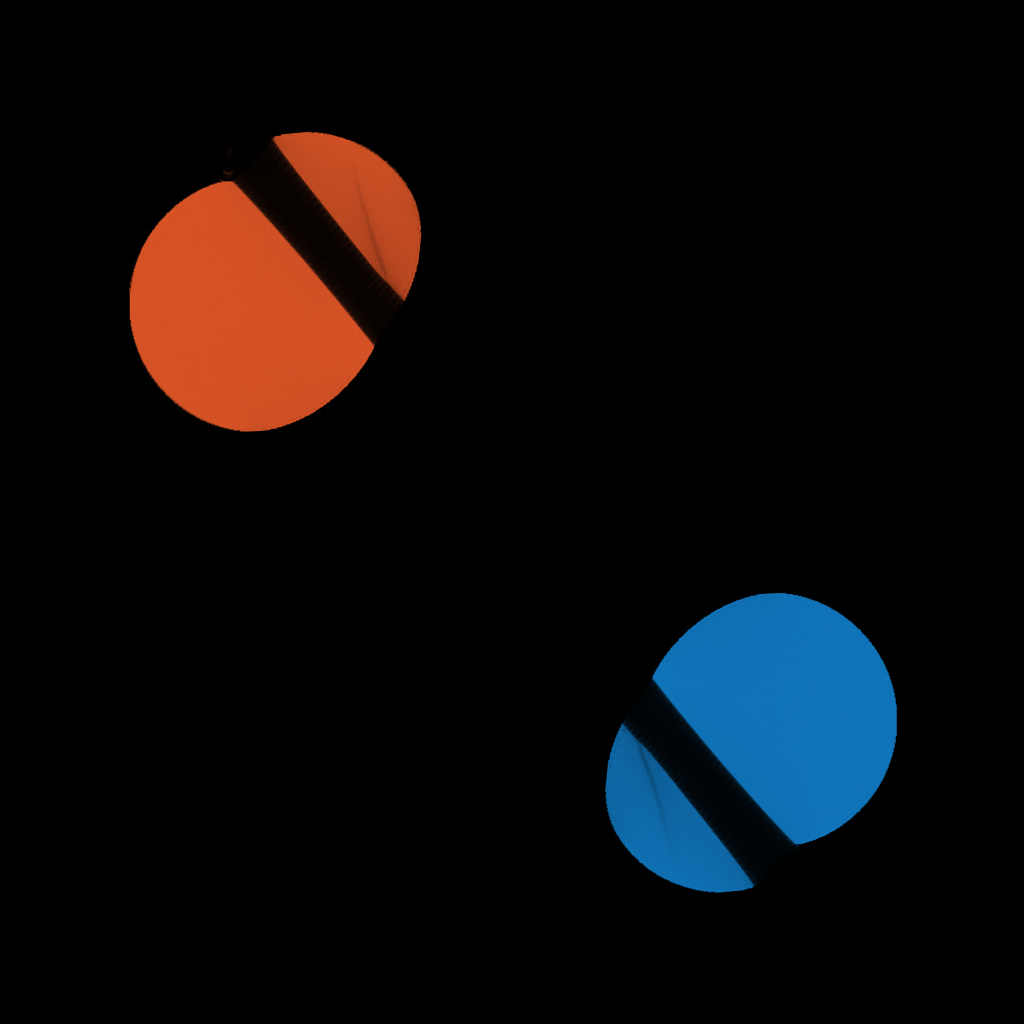}
\includegraphics[width=0.225 \textwidth]{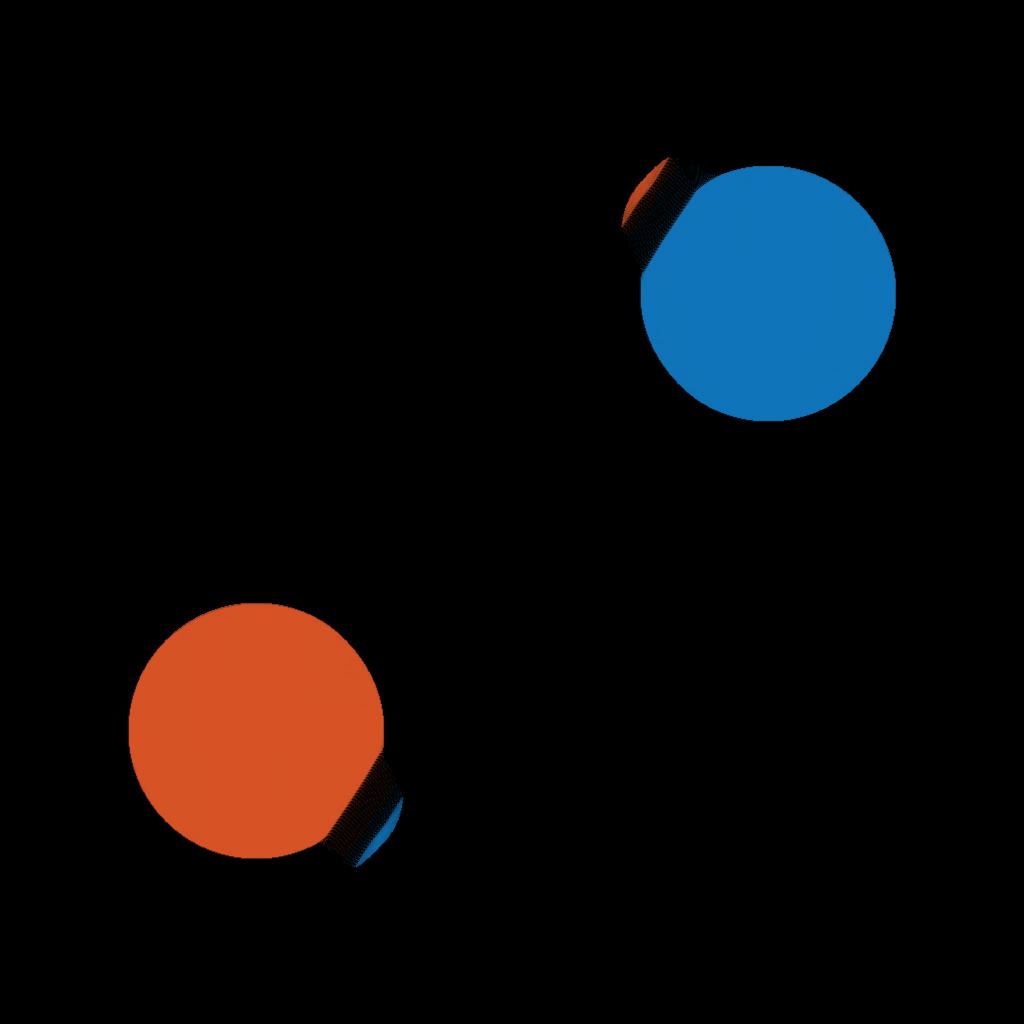}
\includegraphics[width=0.225 \textwidth]{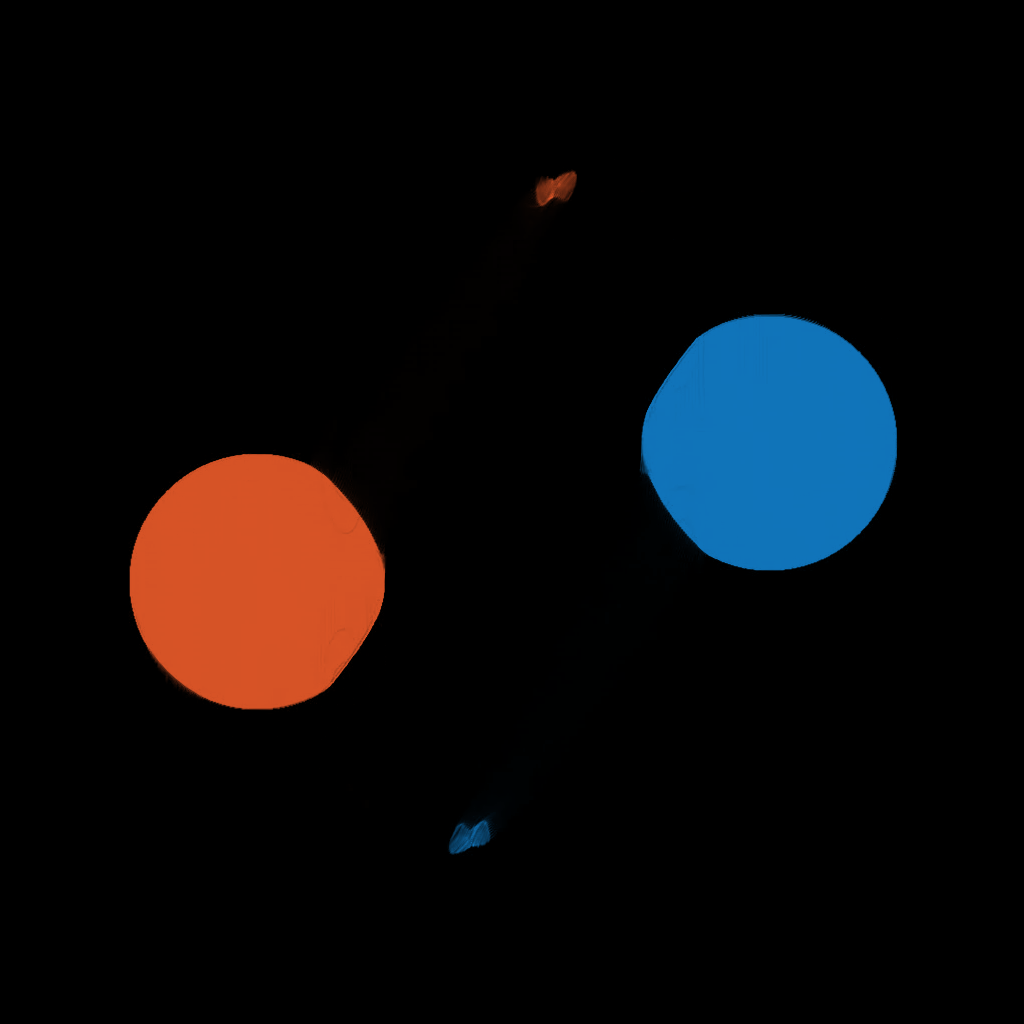}
\includegraphics[width=0.225 \textwidth]{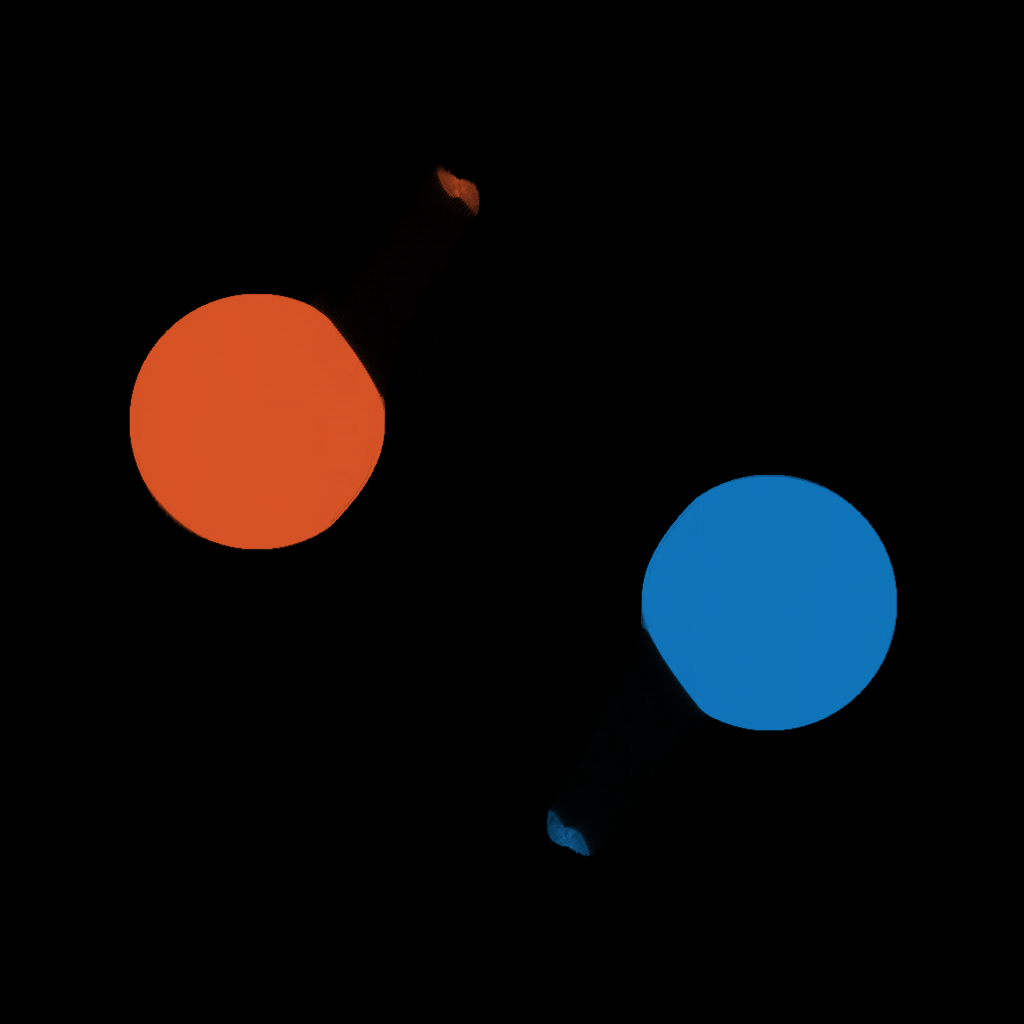}
\includegraphics[width=0.225 \textwidth]{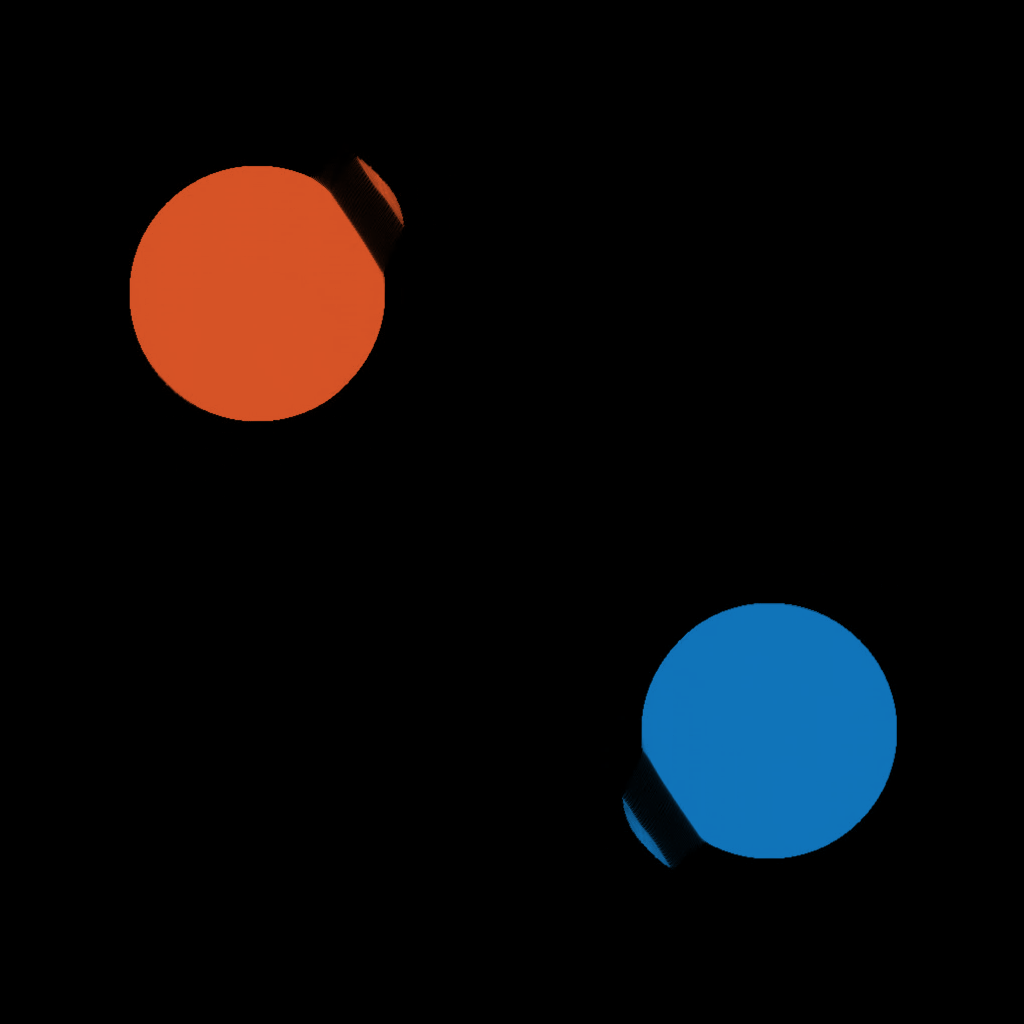}
\includegraphics[width=0.225 \textwidth]{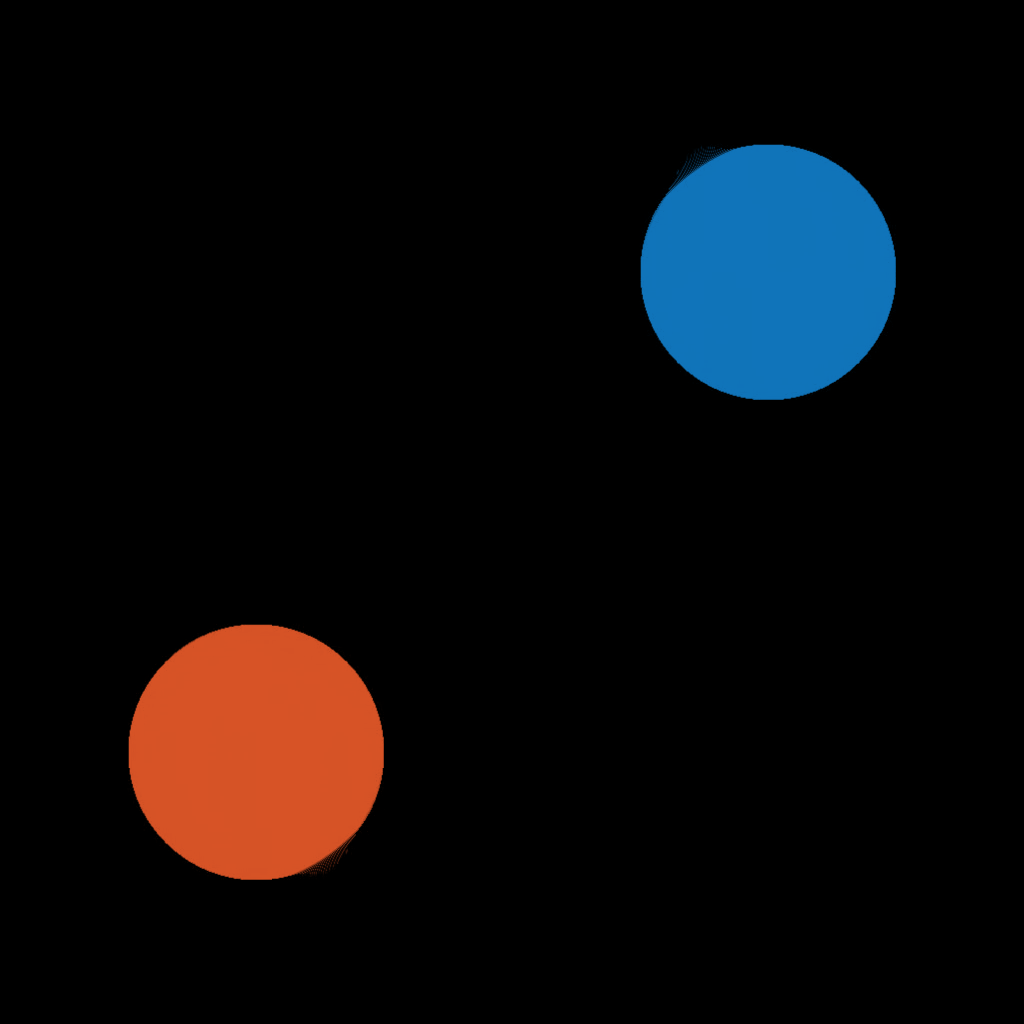}
\includegraphics[width=0.225 \textwidth]{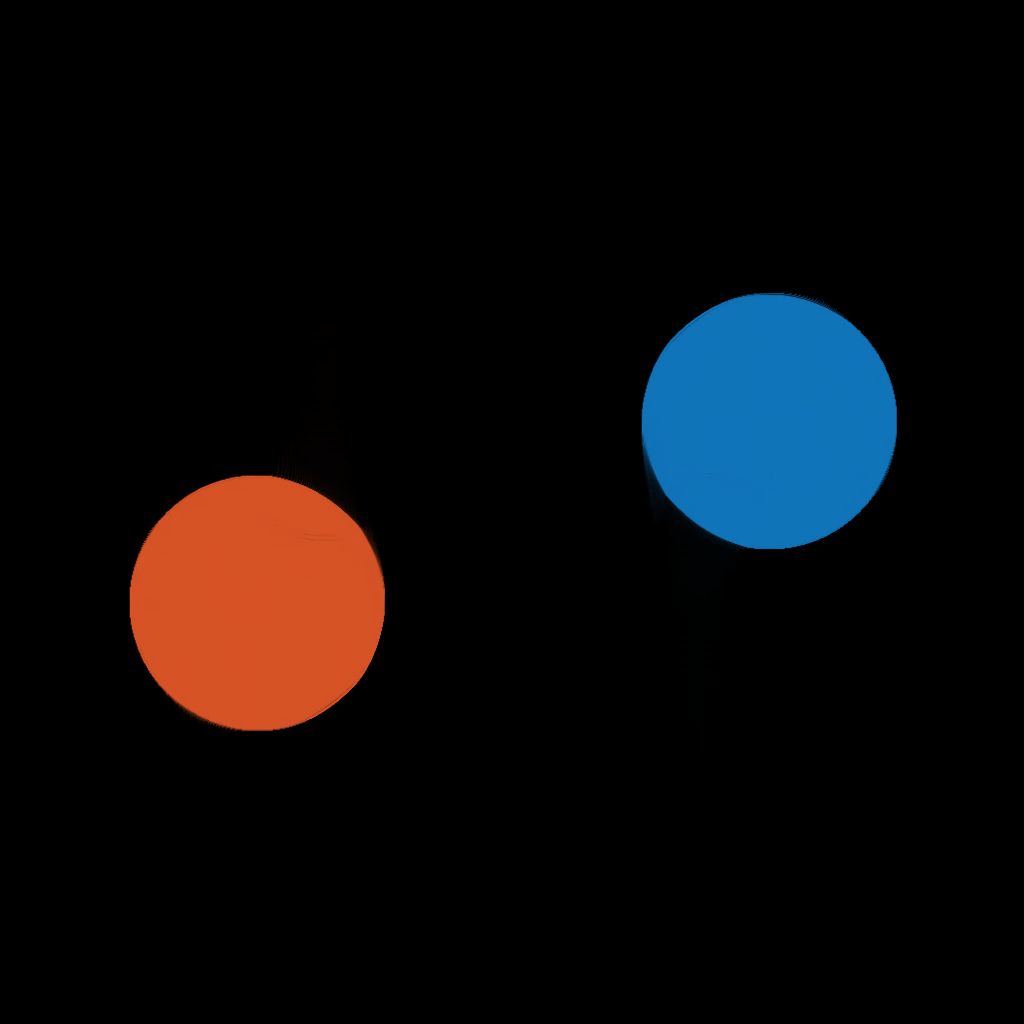}
\includegraphics[width=0.225 \textwidth]{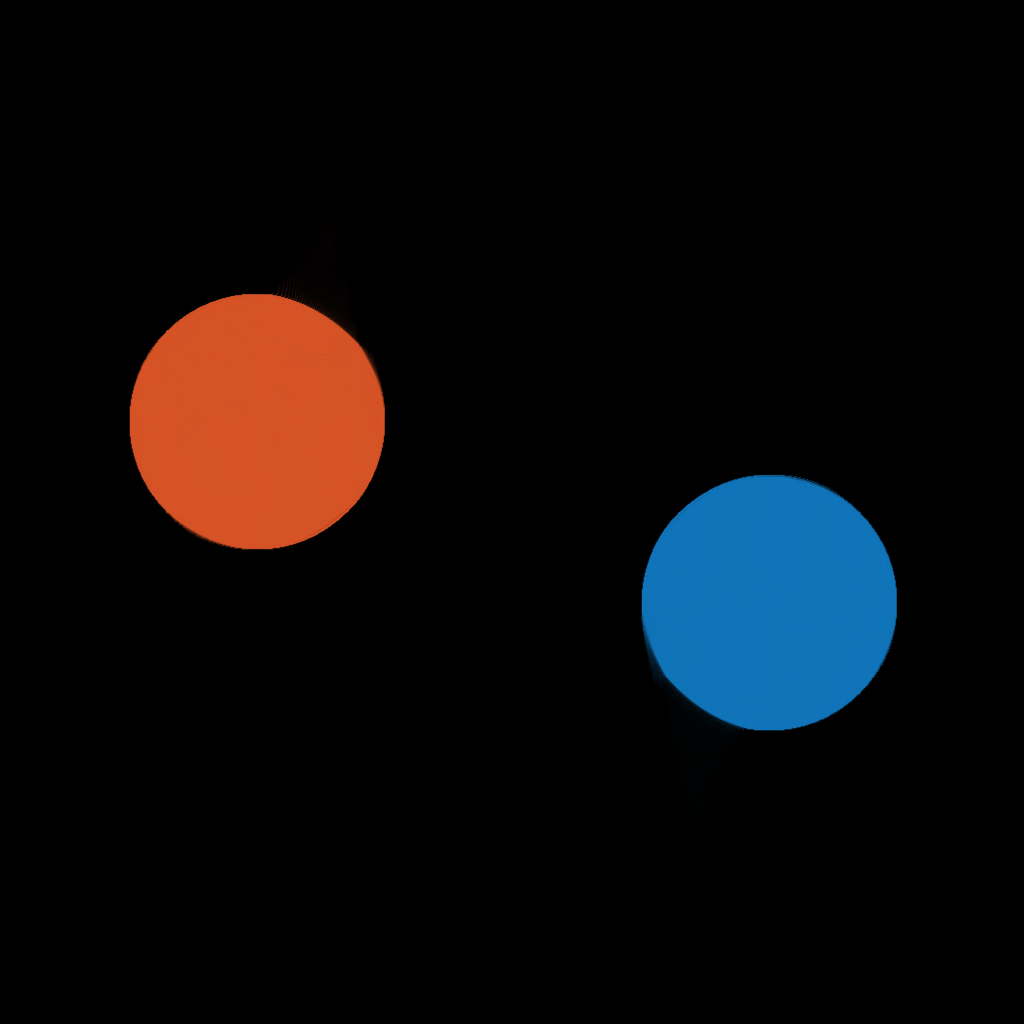}
\includegraphics[width=0.225 \textwidth]{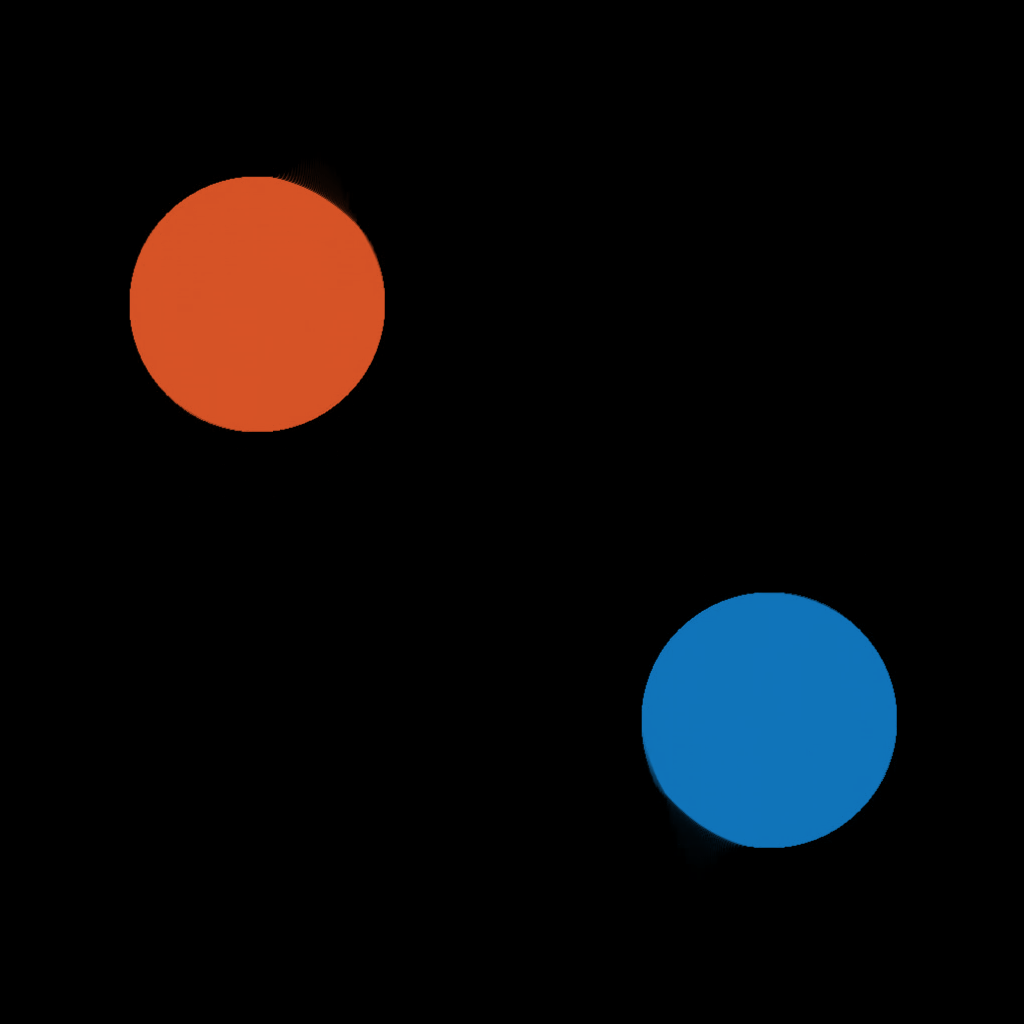}
\caption{Optimal transport with different cost functions.  Each row depicts the optimal trajectory for a cost of the form~\eqref{eq:p_costs}.  Row 1: $p_1=2$, $p_2=2$; Row 2: $p_1=1.75$, $p_2=2.25$; Row 3: $p_1=1.5$, $p_2=2.5$; Row 4: $p_1=1.1$, $p_2=3$. Colors represent an assignment: orange pixels move to the upper left disk while blue pixels move to the lower right disk. \label{fig:p_costs} }
\end{figure}

We start off with $p_1=p_2=2$.  In this case, each of the discs in $\mu$ splits evenly along the line $y=x$.  The portion of the discs that lie above the line are transported to the disc in $\nu$ centered at $(\frac{1}{4}, \frac{3}{4})$ and the parts below the line are transported to the disc in $\nu$ centered at $(\frac{3}{4}, \frac{1}{4})$. When $p_1<p_2$, it is cheaper to travel in the vertical direction as compared to the horizontal direction (for distances smaller than 1). As a result, when we decrease $p_1$ and increase $p_2$ the optimal map changes to transport more mass in the vertical direction and less mass in the horizontal direction.  When the values become sufficiently extreme, (for example $p_1=1.1$ and $p_2=3$), horizontal transport becomes so expensive that the optimal cost map moves mass in the vertical direction only.  See Figure \ref{fig:p_costs} for a visualization of the optimal trajectories for various choices of $p_1$ and $p_2$.

\appendix

\section{Proofs}

\begin{proof}[Proof of Theorem~\ref{thm:gradient_ascent}]
The ascent property follows from plugging in the choices $\phi=\phi_{n+1}$ and $\hat{\phi}=\phi_n$ into the given inequality. 

Now we suppose that $F$ has a unique maximizer $\phi_*$ and that the sequence $\{\phi_n\}_{n=0}^{\infty}$ lies in a bounded subset of $\mathcal{H}$. We can then extract a weak limit point $\tilde{\phi}$ of the sequence, and by weak upper semi-continuity $F(\tilde{\phi})\geq \lim_{n\to\infty} F(\phi_n)$ (note weak upper semi-continuity is automatic since we assume that $F$ is concave and real-valued).  The values $F(\phi_n)$ are bounded above by $F(\phi_*)$, therefore $\lim_{n\to\infty} \normH{\nabla_{\calH} F(\phi_n)}=0$.  
We can then establish the string of inequalities
$$F(\phi_*)\leq F(\phi_n)+\bracket{\nabla_{\calH} F(\phi_n), \phi_*-\phi_n}\leq F(\phi_n)+2R\normH{\nabla_{\calH} F(\phi_n)},$$
where the first inequality follows from the concavity of $F$ and the second follows from choosing $R=\max(\normH{\phi_*}, \sup_n \normH{\phi_n})$. 
Taking a limit on the right hand side we can conclude that $F(\tilde{\phi})\geq F(\phi_*)$, and thus, $\tilde{\phi}=\phi_*$ since the maximizer is unique.

\end{proof}

\begin{proof}[Proof of Fact~\ref{fact:derivative-J}]
Here we prove the result on $J$ only, the argument for $I$ being identical by symmetry.
Given a continuous function $u\colon\Omega\to \R$
we consider the perturbation
$$J(\phi+tu)=\int_{\Omega} (\phi+tu)^c(x)d\mu(x)+(\phi(x)+tu(x)) d\nu(x).$$
Using property~\ref{lem:c_transform_properties:derivative} of Lemma~\ref{lem:c_transform_properties} we see that
$$\lim_{t \to 0} \frac{J(\phi+tu)-J(\phi)}{t}=\int_{\Omega} u(x)\nu(x)-u(T_{\phi}(x))\mu(x).$$ 
Thus, 
$$\delta J_{\phi}(u)=\int_{\Omega} u\,(\nu-T_{\phi\, \#}\mu)$$
and the result then follows.  
\end{proof}

We now turn our attention to Proposition~\ref{prop:ascent-descent}. The notion of Bregman divergence will play an important role in its proof.

\begin{definition}
\label{def:Bregman-divergence}
Let $\calH$ be a Hilbert space and consider a Fréchet-differentiable function $F\colon\calH\to\Real$. The \emph{Bregman divergence} associated to $F$ is defined by
\[
F(\phi_2|\phi_1) = F(\phi_2) - F(\phi_1) - \delta F_{\phi_1}(\phi_2 - \phi_1),
\]
for all $\phi_1, \phi_2\in\calH$. Similarly, if $\Om$ is a closed and convex subset of $\Rd$ and $\xi\colon\Om\to\Real$ is a differentiable function, we define
\[
\xi(x_2|x_1) = \xi(x_2) - \xi(x_1) - \nabla \xi(x_1)\cdot (x_2 - x_1),
\]
for all $x_1, x_2\in\Om$. 
\end{definition}

As hinted by the above definition, we will make use of Bregman divergences both for Kantorovich potentials (defined on $\Om$) and dual functionals (defined on the Hilbert space $\dot{H}^1(\Om)$). The next results gathers properties which are well-known in the theory of Bregman divergences.

\begin{lemma}
\label{lemma:Bregman-divergences}
Let $\xi$ be a convex and differentiable function defined on $\Om$, and denote by $\xi^*$ its convex conjugate $\xi^*(p) = \sup_{x} p\cdot x - \xi(x)$. Then

\begin{enumerate}[(i)]
\item For any $\lambda \ge 0$, $\xi$ is $\lambda$-strongly convex $\iif \xi(x_2|x_1) \ge \frac{\lambda}{2} \abs{x_2-x_1}^2$.

\item $\xi(x_2|x_1) = \xi^*(p_1|p_2)$ for all $x_1, x_2\in\Om$, where we set $p_i = \nabla\xi(x_i)$. 

\item \label{lemma:Bregman-divergences:lower-to-upper} For any $\lambda > 0$, if $\xi$ is $\lambda$-strongly convex then $\xi(x_1|x_2) \le \frac{\lambda^{-1}}{2} \abs{p_2 - p_1}^2$, for all $x_1, x_2\in\Om$, where $p_i = \nabla\xi(x_i)$.

\item \label{lemma:Bregman-divergences:dual} Fix $x_0\in\Om$. Let $f(u)=\xi(x_0+u|x_0)$ and $g(v)=\xi^*(p_0+v|p_0)$, where $p_0=\nabla\xi(x_0)$. Then $f^*(v)=g(v)$. 

\end{enumerate}
\end{lemma}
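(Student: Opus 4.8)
The plan is to treat the four items as standard consequences of Fenchel duality: (i) is a restatement of the first-order characterization of strong convexity, (ii) and (iv) are direct computations built on the Fenchel equality, and (iii) follows by combining (i) and (ii) with an elementary quadratic maximization. Throughout, the workhorse is the pair of reciprocity relations $\nabla\xi^*(\nabla\xi(x)) = x$ and the Fenchel identity $\xi(x) + \xi^*(p) = p\cdot x$ whenever $p = \nabla\xi(x)$; these let me convert every Bregman expression in the conjugate variables $p_i$ back into expressions in $x_i$, $\xi$, and $\xi^*$. For (i) I would set $\eta(x) = \xi(x) - \frac{\lambda}{2}\abs{x}^2$ and observe that $\lambda$-strong convexity of $\xi$ is by definition convexity of $\eta$. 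Writing the first-order convexity inequality for $\eta$ and expanding the quadratic terms yields exactly $\xi(x_2|x_1) \ge \frac{\lambda}{2}\abs{x_2-x_1}^2$; since each step is reversible, this also gives the converse.

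For (ii) I would expand $\xi^*(p_1|p_2) = \xi^*(p_1) - \xi^*(p_2) - x_2\cdot(p_1-p_2)$, using $\nabla\xi^*(p_2) = x_2$, and then substitute the Fenchel identity $\xi^*(p_i) = p_i\cdot x_i - \xi(x_i)$. After cancellation the right-hand side collapses to $\xi(x_2) - \xi(x_1) - p_1\cdot(x_2-x_1) = \xi(x_2|x_1)$. Part (iv) is the same bookkeeping: writing $f^*(v) = \sup_u\big[v\cdot u - \xi(x_0+u|x_0)\big]$, inserting the definition of the Bregman divergence, and changing variables $x = x_0 + u$, the supremum becomes $\xi^*(p_0+v) - (p_0+v)\cdot x_0 + \xi(x_0)$, which is precisely $g(v) = \xi^*(p_0+v|p_0)$ once I use $\nabla\xi^*(p_0)=x_0$ together with the Fenchel identity at $(x_0,p_0)$.

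For (iii), I would first invoke (ii) in the form $\xi(x_1|x_2) = \xi^*(p_2|p_1)$, so it suffices to bound the conjugate Bregman divergence from above. Expanding $\xi^*(p_2|p_1)$ as a supremum and rearranging with the definition of $\xi(x|x_1)$ gives the variational identity $\xi^*(p_2|p_1) = \sup_{x}\big[(p_2-p_1)\cdot(x-x_1) - \xi(x|x_1)\big]$. Now (i) supplies the lower bound $\xi(x|x_1)\ge \frac{\lambda}{2}\abs{x-x_1}^2$, and the elementary maximization $\sup_{u}\big[v\cdot u - \frac{\lambda}{2}\abs{u}^2\big] = \frac{1}{2\lambda}\abs{v}^2$ with $v = p_2-p_1$ yields $\xi^*(p_2|p_1) \le \frac{\lambda^{-1}}{2}\abs{p_2-p_1}^2$, which is the claim.

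The only genuine subtlety, and the step I would be most careful about, is the regularity needed to apply the reciprocity relations: items (ii)--(iv) implicitly require $\xi^*$ to be differentiable at the points $p_i$, which is guaranteed by strong convexity of $\xi$ but should be flagged; and the maximization in (iii) is a priori taken over $\Om$, so I would note that enlarging the supremum to all of $\Rd$ only increases it and hence preserves the upper bound. Beyond this, none of the four items presents a real obstacle, as each reduces to the reciprocity relations plus routine algebra.
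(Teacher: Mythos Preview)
The paper does not actually prove this lemma: it is introduced with the sentence ``The next results gathers properties which are well-known in the theory of Bregman divergences'' and then used without further justification. Your proposal therefore cannot be compared against a proof in the paper, but it is a correct and standard derivation of all four items; the Fenchel identity plus the reciprocity $\nabla\xi^*(\nabla\xi(x))=x$ is exactly the machinery one needs, and your variational identity $\xi^*(p_2|p_1)=\sup_x\big[(p_2-p_1)\cdot(x-x_1)-\xi(x|x_1)\big]$ in (iii) is the clean way to pass from the lower bound (i) to the upper bound. Your flagged caveat about differentiability of $\xi^*$ at the points $p_i$ is the right thing to note, since the lemma as stated only assumes $\xi$ convex and differentiable; in the paper's applications $\xi$ is always strongly convex, so the issue does not arise there.
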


Now, recall that that the dual functional $J$ is defined by $J(\phi) = \int \phi \,\nu + \int \phi^c\,\mu$. It is a concave functional, and note that the first term $\int\phi\,\nu$ is linear in $\phi$ and therefore plays no role in the convexity of $J$. We therefore set
\begin{equation}
\label{eq:def-F}
F(\phi) = -\int \phi^c\,\mu,
\end{equation}
a convex functional which essentially has the same convexity properties as $J$ since they only differ by a linear term. For instance one can check directly that for any potentials $\phi_1$ and $\phi_2$ we have 
\begin{equation}
\label{eq:Bregman-divergence-J-F}
J(\phi_2 | \phi_1) = -F(\phi_2|\phi_1) .
\end{equation}
Finally, the convex conjugate of $F$ is defined here by
\[
F^*(\rho) = \int_{\Om} \phi\,\rho - F(\phi). 
\]
We are at this point ready to state the next lemma which is at the core of obtaining stability estimates in the gradient schemes.

\begin{lemma}
\label{lemma:main}
Let $\phi_1$ and $\phi_2$ be two twice-differentiable real-valued functions defined on $\Om$ such that
\[
(1-\lambda^{-1})I \le D^2\phi_i(x) \le (1-\lambda)I,
\]
for $i=1,2$, for some $\lambda>0$. Moreover, let $\rho_i = T_{\phi_i\#}\mu$ the associated mass densities, for $i=1,2$. Then
\begin{equation}
\label{eq:upper-bound-divergence-F}
F(\phi_2 | \phi_1) \le \frac{1}{2}\normlinf{\mu} \,\lambda^{-(d+1)} \norm{\phi_2 - \phi_1}_{\dot{H}^1}^2.
\end{equation}
Additionally,
\begin{equation}
\label{eq:lower-bound-divergence-F}
F(\phi_2 | \phi_1) \ge \frac{1}{2}\normlinf{\mu}^{-1} \,\lambda^{d+1} \norm{\rho_2 - \rho_1}_{H^{-1}}^2.
\end{equation}
\end{lemma}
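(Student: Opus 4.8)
The plan is to reduce the functional Bregman divergence $F(\phi_2\mid\phi_1)$ to a single \emph{pointwise} Bregman divergence of a convex potential, and then sandwich it between the strong-convexity and smoothness bounds coming from the Hessian hypothesis. Since the cost is quadratic I set $u_i = \tfrac12\abs{\cdot}^2 - \phi_i$, so that $(1-\lambda^{-1})I \le D^2\phi_i \le (1-\lambda)I$ becomes $\lambda I \le D^2 u_i \le \lambda^{-1}I$; thus each $u_i$ and each conjugate $u_i^*$ is $\lambda$-strongly convex and $\lambda^{-1}$-smooth. I also use $\phi_i^c = \tfrac12\abs{\cdot}^2 - u_i^*$, the identification $T_{\phi_i} = \nabla u_i^*$, and $\rho_i = (\nabla u_i^*)_\#\mu$.

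The crux is an exact formula for $F(\phi_2\mid\phi_1)$. Writing $F(\phi) = \int u^*\,d\mu$ up to a $\phi$-independent constant, and combining $\delta F_{\phi_1} = \rho_1 = T_{\phi_1\#}\mu$ (Lemma~\ref{fact:derivative-J}) with the Fenchel equality $u_1^*(x) + u_1(\nabla u_1^*(x)) = x\cdot\nabla u_1^*(x)$, I expect to obtain
\[
F(\phi_2 \mid \phi_1) = \int_\Om u_2\big(T_{\phi_1}(x) \mid T_{\phi_2}(x)\big)\,d\mu(x).
\]
By Lemma~\ref{lemma:Bregman-divergences}(ii), and using $\nabla u_2(T_{\phi_2}(x)) = x$, this also equals $\int_\Om u_2^*\big(x \mid \nabla u_2(T_{\phi_1}(x))\big)\,d\mu$. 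Having both forms is exactly what makes the two estimates succeed: the $u_2$-form carries the factor $\abs{T_{\phi_1}-T_{\phi_2}}$ needed for the $\dot H^{-1}$ bound, whereas the $u_2^*$-form carries $x - \nabla u_2(T_{\phi_1}(x)) = \nabla(\phi_2-\phi_1)\big(T_{\phi_1}(x)\big)$, which is what produces $\normhone{\phi_2-\phi_1}$.

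For the upper bound I start from the $u_2^*$-form. Smoothness of $u_2^*$ gives $u_2^*(x\mid q) \le \tfrac{\lambda^{-1}}{2}\abs{x-q}^2$ with $q = \nabla u_2(T_{\phi_1}(x))$, and the identity above rewrites the right-hand side as $\tfrac{\lambda^{-1}}{2}\abs{\nabla(\phi_2-\phi_1)(T_{\phi_1}(x))}^2$. Pushing forward by $T_{\phi_1}$ turns $\int\,\cdot\,d\mu$ into $\int\,\cdot\,d\rho_1$, and the density bound $\norm{\rho_1}_\infty \le \normlinf{\mu}\lambda^{-d}$ — valid because $D^2 u_1^* \ge \lambda I$ forces $\det D^2 u_1^* \ge \lambda^d$ — yields \eqref{eq:upper-bound-divergence-F}. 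For the lower bound I start from the $u_2$-form and apply $\lambda$-strong convexity (Lemma~\ref{lemma:Bregman-divergences}(i)) to get $F(\phi_2\mid\phi_1)\ge \tfrac\lambda2\int\abs{T_{\phi_1}-T_{\phi_2}}^2\,d\mu$. I then bound this below by $\normhmone{\rho_2-\rho_1}^2$ through a continuity-equation estimate: interpolate $T_t = (1-t)T_{\phi_1}+tT_{\phi_2} = \nabla\big((1-t)u_1^*+tu_2^*\big)$, put $\rho_t = T_{t\#}\mu$, and use $\norm{\partial_t\rho_t}_{\dot H^{-1}} \le \norm{\rho_t}_\infty^{1/2}\big(\int\abs{T_{\phi_1}-T_{\phi_2}}^2\,d\mu\big)^{1/2}$ together with $\norm{\rho_t}_\infty \le \normlinf{\mu}\lambda^{-d}$, the latter holding because the interpolating potential is again $\lambda$-strongly convex; this gives \eqref{eq:lower-bound-divergence-F}.

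The main obstacle is the second step: establishing the exact Bregman identity and, above all, justifying the measure-theoretic manipulations that make the factors $\lambda^{\mp d}$ appear with the correct exponent. In particular the pushforward and density estimates require that each $T_{\phi_i}$ truly maps $\Om$ into $\Om$, so that $\rho_i$ (and every $\rho_t$) is supported in $\Om$ and its density is controlled by the Monge--Amp\`ere Jacobian bound; this is precisely where the \emph{constrained} $c$-transform $\phi^c(x)=\inf_{y\in\Om}$ must be used rather than the unconstrained Legendre transform, since naively allowing the maps to leave $\Om$ destroys the sharp constant. A secondary technical point is the rigor of the continuity-equation bound in the lower half, which needs enough regularity of $t\mapsto\rho_t$ to integrate $\partial_t\rho_t$ in $\dot H^{-1}$.
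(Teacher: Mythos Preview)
Your proposal is correct, and your derivation of the upper bound~\eqref{eq:upper-bound-divergence-F} is essentially the paper's argument: the exact identity $F(\phi_2\mid\phi_1)=\int u_2\big(T_{\phi_1}\mid T_{\phi_2}\big)\,d\mu$ (the paper's Step~1, with your $u_2$ written as $\xi_2$), followed by the co-coercivity bound on $u_2$ (equivalently, smoothness of $u_2^*$) and the Jacobian estimate $\det DT_{\phi_1}\ge\lambda^d$ giving $\normlinf{\rho_1}\le\lambda^{-d}\normlinf{\mu}$.

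For the lower bound~\eqref{eq:lower-bound-divergence-F}, however, you take a genuinely different route. You argue directly: strong convexity of $u_2$ gives $F(\phi_2\mid\phi_1)\ge\tfrac{\lambda}{2}\int\abs{T_{\phi_1}-T_{\phi_2}}^2\,d\mu$, and then a Benamou--Brenier/continuity-equation estimate along the interpolation $T_t=\nabla\big((1-t)u_1^*+tu_2^*\big)$ converts this kinetic energy into the $\dot H^{-1}$ distance, the factor $\lambda^{-d}$ entering through $\normlinf{\rho_t}$. The paper instead deduces the lower bound by pure convex duality: once $F(\phi_1+h\mid\phi_1)\le\tfrac{C}{2}\normhone{h}^2$ is known, Legendre conjugation in $h$ (Lemma~\ref{lemma:Bregman-divergences}\ref{lemma:Bregman-divergences:dual}) immediately yields $F^*(\rho_1+u\mid\rho_1)\ge\tfrac{C^{-1}}{2}\normhmone{u}^2$, with no further estimates required. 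The paper's approach is shorter and neatly exhibits the two inequalities as conjugate to each other; your approach is more explicit and avoids the abstract Bregman duality, at the cost of the extra analytic work you flag (regularity of $t\mapsto\rho_t$ and the $\dot H^{-1}$ bound on $\partial_t\rho_t$), which is routine here since the interpolating potential stays uniformly $\lambda$-strongly convex.
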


\begin{proof}
\textbf{Step 1.} We start the proof by establishing the equality
\begin{equation}
\label{eq:expression-Bregman-divergence-F}
F(\phi_2|\phi_1) = \int_{\Om} \Big[ \frac{1}{2}\babs{T_{\phi_1}(x) - T_{\phi_2}(x)}^2 - \phi_2\big(T_{\phi_1}(x)\big|T_{\phi_2}(x)\big) \Big] \, \mu(x) \, dx .
\end{equation}
To do so, first note that a simple variant of Fact~\ref{fact:derivative-J} tells us that the derivative of $F$ is precisely 
\begin{equation}
\label{eq:derivative-F}
\delta F_{\phi} = T_{\phi\#}\mu.
\end{equation}
Therefore the Bregman divergence associated to $F$ can be written 
\[
F(\phi_2|\phi_1) = \int_{\Om}\Big[-\phi_2^c(x) + \phi_1^c(x) - \phi_2\big(T_{\phi_1}(x)\big) + \phi_1\big(T_{\phi_1}(x)\big) \Big] \, \mu(x) \, dx .
\]
The above expression can be simplified using the identity
\[
-\phi^c(x) = \phi\big(T_{\phi}(x)\big) - \frac{1}{2}\babs{T_{\phi}(x)-x}^2,
\]
which follows immediately from the definitions of the $c$-transform and of $T_{\phi}$. Consequently,
\[
F(\phi_2|\phi_1) = \int_{\Om} \Big[ \frac{1}{2}\babs{T_{\phi_1}(x) - x}^2 - \frac{1}{2}\babs{T_{\phi_2}(x) - x}^2 + \phi_2\big(T_{\phi_2}(x)\big) - \phi_2\big(T_{\phi_1}(x)\big) \Big]\,\mu(x)\,dx.
\]
To proceed further we make use of another equality,
\begin{equation}
\label{eq:expression-T-gradient-phi}
\nabla\phi(T_{\phi}(x)) = T_{\phi}(x)-x,
\end{equation}
which is again a direct consequence of the $c$-transform definition. Adding the term $-\big(T_{\phi_2}(x) - x\big)\cdot\big(T_{\phi_1}(x) - T_{\phi_2}(x)\big) + \nabla\phi_2(T_{\phi_2}(x))\cdot \big(T_{\phi_1}(x) - T_{\phi_2}(x)\big) $ (which is thus always $0$) to the expression of $F(\phi_2|\phi_1)$ is enough to obtain the desired equality~\eqref{eq:expression-Bregman-divergence-F}.

\textbf{Step 2.} We now establish the inequality
\begin{equation}
\label{eq:inequality-Bregman-divergence-F}
F(\phi_2|\phi_1) \le \frac{\lambda^{-1}}{2} \int_{\Om} \abs{\nabla \phi_2(y) - \nabla\phi_1(y)}^2\, \rho_1(y)\,dy,
\end{equation}
where $\rho_1 = T_{\phi_1\#}\mu$. Set 
\[
\xi_2(x) = \frac{1}{2}\abs{x}^2 - \phi_2(x),
\]
which is a $\lambda$-strongly convex function because of the assumption made on $\phi_2$. Then, expression~\eqref{eq:expression-Bregman-divergence-F} can be written
\[
F(\phi_2|\phi_1) = \int_{\Om} \xi_2\big(T_{\phi_1}(x)\big|T_{\phi_2}(x)\big)\, \mu(x) \, dx .
\]
By Lemma~\ref{lemma:Bregman-divergences}\ref{lemma:Bregman-divergences:lower-to-upper} we derive the upper bound
\[
\xi_2(t_1|t_2) \le \frac{\lambda^{-1}}{2}\abs{\nabla \xi_2(t_1) - \nabla \xi_2(t_2)}^2,
\]
for any $t_1, t_2\in\Om$. Employing again identity~\eqref{eq:expression-T-gradient-phi} we see that on the one hand
\[
\nabla\xi_2\big(T_{\phi_1}(x)\big) = x + \nabla\phi_1\big(T_{\phi_1}(x)\big) - \nabla\phi_2\big(T_{\phi_1}(x)\big),
\]
and on the other hand $\nabla\xi_2\big(T_{\phi_2}(x)\big) = x$. We deduce the upper bound
\[
\xi_2\big(T_{\phi_1}(x)\big|T_{\phi_2}(x)\big) \le \frac{\lambda^{-1}}{2}\abs{\nabla \phi_1\big(T_{\phi_1}(x)\big) - \nabla\phi_2\big(T_{\phi_1}(x)\big)}^2,
\]
which after a change of measure implies the desired inequality~\eqref{eq:inequality-Bregman-divergence-F}.

\textbf{Step 3.} The next part of the proof consists of estimating the $L^{\infty}$ norm of $\rho_1$, 
\begin{equation}
\label{eq:Linf-estimate}
\normlinf{\rho_1} \le \lambda^{-d} \normlinf{\mu}.
\end{equation}
The strong convexity of $\phi_1$ implies that the map $T_{\phi_1}$ is injective.  
Since $\rho_1$ is the (density of) the pushforward measure $T_{\phi_1\#}\mu$, we have the change of variable formula
\[
\babs{\det DT_{\phi_1}(x)} \,\rho_1\big(T_{\phi_1}(x)\big) = \mu(x) ,
\]
which is valid for injective and differentiable maps. 
We now show that $DT_{\phi_1}$ is a symmetric positive-definite matrix whose eigenvalues are bounded below by $\lambda$. To do so, set
\[
\xi_1(x) = \frac{1}{2}\abs{x}^2 - \phi_1(x).
\]
By the assumption made on $\phi_1$ we have $D^2\xi_1(x) \le \lambda^{-1}$. Making use of identity~\eqref{eq:expression-T-gradient-phi}, we can write $x = \nabla\xi_1\big(T_{\phi_1}(x)\big)$, which can be inverted into $\nabla\xi_1^*(x) = T_{\phi_1}(x)$. Since the Hessian of $\xi_1$ has eigenvalues bounded above by $\lambda^{-1}$, its convex conjugate is $\lambda$-strongly convex. Therefore \[
\det DT_{\phi_1}(x) \ge \lambda^{d},
\]
for any $x\in\Om$. The bound~\eqref{eq:Linf-estimate} directly follows, and combining it with~\eqref{eq:inequality-Bregman-divergence-F} concludes the proof of~\eqref{eq:upper-bound-divergence-F}. 

\textbf{Step 4.} Now that the upper bound~\eqref{eq:upper-bound-divergence-F} is proven, it directly implies the lower bound~\eqref{eq:lower-bound-divergence-F}. Indeed, rewrite~\eqref{eq:upper-bound-divergence-F} as
\[
F(\phi_1 + h|\phi_1) \le \frac{C}{2}\normhone{h}^2,
\]
with $h=\phi_2-\phi_1$ and $C = \normlinf{\mu} \lambda^{-(d+1)}$. Taking convex conjugation on both sides, which reverses the sign of inequalities, yields
\[
F(\phi_1 + \cdot\,|\phi_1)^*(u) \ge \frac{C^{-1}}{2}\normhmone{u}^2.
\]
Indeed it is easy to see that $\frac{1}{2}\normhone{\cdot}^2$ and $\frac{1}{2}\normhmone{\cdot}^2$ are conjugate to each other.
We then apply the Bregman divergence property~\ref{lemma:Bregman-divergences:dual} from Lemma~\ref{lemma:Bregman-divergences} to the functional $F$ and obtain
\[
F^*(\rho_1 + u|\rho_1) \ge \frac{C^{-1}}{2}\normhmone{u}^2,
\]
where $\rho_1 = \delta F_{\phi_1} = T_{\phi_1\#}\mu$. This is precisely the desired inequality when $u=\rho_2-\rho_1$. 
\end{proof}

We have now at our disposition all the necessary tools to prove Proposition~\ref{prop:ascent-descent}. 

\begin{proof}[Proof of Proposition~\ref{prop:ascent-descent}]
\textbf{Ascent property.} Consider two consecutive iterates $\phi_n$ and $\phi_{n+1}$. By Fact~\ref{fact:derivative-J} the derivative of $J$ takes the form $\delta J_{\phi} = \nu - T_{\phi\#}\mu$, which implies that
\[
J(\phi_{n+1}) - J(\phi_n) = J(\phi_{n+1}|\phi_n) + \int_{\Om} (\phi_{n+1} - \phi_n)(\nu - T_{\phi_n\#}\mu).
\]
Here we introduced a \emph{Bregman divergence} $J(\cdot|\cdot)$ (see Definition~\ref{def:Bregman-divergence}  in this appendix). Let $F$ be defined by~\eqref{eq:def-F}, then as previously noted in~\eqref{eq:Bregman-divergence-J-F} we have $J(\phi_{n+1}|\phi_n) = -F(\phi_{n+1}|\phi_n)$. Additionally the gradient step $\phi_{n+1} - \phi_n = \sigma (-\laplacian)^{-1}(\nu - T_{\phi_n\#}\mu)$ implies that
\[
\int_{\Om} (\phi_{n+1} - \phi_n)(\nu - T_{\phi_n\#}\mu) = \sigma^{-1}\normhone{\phi_{n+1} - \phi_n}^2.
\]
Combining these two expressions we arrive at
\[
J(\phi_{n+1}) - J(\phi_n) = -F(\phi_{n+1}|\phi_n) + \sigma^{-1}\normhone{\phi_{n+1} - \phi_n}^2,
\]
and the choice $\sigma = \frac{\lambda^{d+1}}{\normlinf{\mu}}$ together with Lemma~\ref{lemma:main} stated and proven right above is enough to obtain the ascent property 
\[
J(\phi_{n+1}) - J(\phi_n) \ge \frac{1}{2}\normlinf{\mu}\lambda^{-(d+1)} \normhone{\phi_{n+1}-\phi_n}^2 .
\]

\textbf{Decrease in $H^{-1}$ norm.} A simple computation, expanding and rearranging quadratic terms, shows that
\[
\norm{\rho_{n+1} - \nu}_{H^{-1}}^2 - \norm{\rho_{n} - \nu}_{H^{-1}}^2 = \norm{\rho_{n+1} - \rho_n}_{H^{-1}}^2 - 2\int_{\Om} (-\laplacian)^{-1}(\nu - \rho_n)(\rho_{n+1} - \rho_n).
\]
Since the iterate $\phi_{n+1}$ is defined by $\phi_{n+1} = \phi_n + \sigma (-\laplacian)^{-1}(\nu - \rho_n)$,we obtain
\[
\norm{\rho_{n+1} - \nu}_{H^{-1}}^2 - \norm{\rho_{n} - \nu}_{H^{-1}}^2 = \norm{\rho_{n+1} - \rho_n}_{H^{-1}}^2 - 2\sigma^{-1} \int_{\Om} (\phi_{n+1} - \phi_n)(\rho_{n+1} - \rho_n) .
\]
Next, as was explained by~\eqref{eq:derivative-F} the derivative of the functional $F$ defined by~\eqref{eq:def-F} is $\delta F_{\phi} = \rho$, with $\rho = T_{\phi\#}\mu$, and since $F$ is a convex functional this relation can be inverted into
\[
\phi = \delta F^*_{\!\rho},
\]
where $F^*$ denotes the convex conjugate of $F$ defined by $F^*(\rho) = \sup_{\phi} \int \phi\,\rho - F(\phi)$. Thus we can write
\begin{align*}
 & \norm{\rho_{n+1} - \nu}_{H^{-1}}^2 - \norm{\rho_{n} - \nu}_{H^{-1}}^2 \\ 
 =& \norm{\rho_{n+1} - \rho_n}_{H^{-1}}^2 - 2\sigma^{-1} \big(\delta F^*_{\rho_{n+1}} - \delta F^*_{\rho_n}\big)(\rho_{n+1} - \rho_n) \\ 
 =& \norm{\rho_{n+1} - \rho_n}_{H^{-1}}^2 - 2\sigma^{-1}  \big(F^*(\rho_{n+1}|\rho_n) + F^*(\rho_n|\rho_{n+1})\big) \\
 =& -2\sigma^{-1}\Big[ F^*(\rho_{n+1}|\rho_n) + F^*(\rho_n|\rho_{n+1}) - \frac{\sigma}{2}\norm{\rho_{n+1} - \rho_n}_{H^{-1}}^2 \Big] .
\end{align*}
By Lemma~\ref{lemma:main} we have $F^*(\rho_{n+1}|\rho_n) + F^*(\rho_n|\rho_{n+1}) \ge \sigma \norm{\rho_{n+1} - \rho_n}_{H^{-1}}^2$, which is enough to conclude the proof.
\end{proof}

\begin{proof}[Proof of Proposition~\ref{prop:ascent-back-and-forth}]
Fix an iterate $\phi_n$. Then $J(\phi_n) = D(\phi_n,\phi_n^c)$. By Proposition~\ref{prop:ascent-descent} we have
\[
J(\phi_{n+\frac{1}{2}}) \ge J(\phi_n),
\]
since the step $\phi_n\to\phi_{n+\frac{1}{2}}$ is a gradient ascent step of $J$; also note that $J(\phi_{n+\frac{1}{2}}) = D(\phi_{n+\frac{1}{2}}, (\phi_{n+\frac{1}{2}})^c) = D(\phi_{n+\frac{1}{2}},\psi_{n+\frac{1}{2}})$. Next, it is easy to see that the value of $D$ always increases (or plateaus) when taking $c$-transforms, thus
\[
D\big((\psi_{n+\frac{1}{2}})^c,\psi_{n+\frac{1}{2}}\big) \ge D(\phi_{n+\frac{1}{2}},\psi_{n+\frac{1}{2}}).
\]
Therefore, we have shown that $I(\psi_{n+\frac{1}{2}}) \ge J(\phi_{n+\frac{1}{2}})$; the remaining inequalities can be treated in a similar fashion.
\end{proof}

\bibliographystyle{amsalphaurl}
\bibliography{fast-ot}

\newcommand{\etalchar}[1]{$^{#1}$}
\providecommand{\bysame}{\leavevmode\hbox to3em{\hrulefill}\thinspace}
\providecommand{\MR}{\relax\ifhmode\unskip\space\fi MR }
\providecommand{\MRhref}[2]{%
  \href{http://www.ams.org/mathscinet-getitem?mr=#1}{#2}
}
\providecommand{\href}[2]{#2}
\begin{thebibliography}{PBtTB{\etalchar{+}}15}

\bibitem[ACB17]{WassersteinGAN}
Martin Arjovsky, Soumith Chintala, and L{\'e}on Bottou, \emph{{W}asserstein
  generative adversarial networks}, Proceedings of the 34th International
  Conference on Machine Learning, Proceedings of Machine Learning Research,
  vol.~70, PMLR, 06--11 Aug 2017, pp.~214--223,
  \url{http://proceedings.mlr.press/v70/arjovsky17a.html}.

\bibitem[Arm66]{Armijo}
Larry Armijo, \emph{Minimization of functions having {L}ipschitz continuous
  first partial derivatives}, Pacific Journal of mathematics \textbf{16}
  (1966), no.~1, 1--3.

\bibitem[BB00]{bb00}
Jean-David Benamou and Yann Brenier, \emph{A computational fluid mechanics
  solution to the {M}onge--{K}antorovich mass transfer problem}, Numer. Math.
  \textbf{84} (2000), no.~3, 375--393, \href
  {http://dx.doi.org/10.1007/s002110050002} {\path{doi:10.1007/s002110050002}}.

\bibitem[BC89]{bertsekas_transportation_auctions}
Dimitri~P. Bertsekas and David~A. Castanon, \emph{The auction algorithm for the
  transportation problem}, Annals of Operations Research \textbf{20} (1989),
  no.~1, 67--96, \href {http://dx.doi.org/10.1007/BF02216923}
  {\path{doi:10.1007/BF02216923}}.

\bibitem[BFO14]{benamou_froese_oberman}
Jean-David Benamou, Brittany~D. Froese, and Adam~M. Oberman, \emph{Numerical
  solution of the optimal transportation problem using the {M}onge--{A}mp\`ere
  equation}, J. Comput. Phys. \textbf{260} (2014), 107--126, \href
  {http://dx.doi.org/10.1016/j.jcp.2013.12.015}
  {\path{doi:10.1016/j.jcp.2013.12.015}}.

\bibitem[Bre89]{brenier_euler_assignment}
Yann Brenier, \emph{A combinatorial algorithm for the {E}uler equations of
  incompressible flows}, Proceedings of the {E}ighth {I}nternational
  {C}onference on {C}omputing {M}ethods in {A}pplied {S}ciences and
  {E}ngineering ({V}ersailles, 1987), vol.~75, 1989, pp.~325--332, \href
  {http://dx.doi.org/10.1016/0045-7825(89)90033-9}
  {\path{doi:10.1016/0045-7825(89)90033-9}}.

\bibitem[Bre91]{brenier_polar}
\bysame, \emph{Polar factorization and monotone rearrangement of vector-valued
  functions}, Comm. Pure Appl. Math. \textbf{44} (1991), no.~4, 375--417, \href
  {http://dx.doi.org/10.1002/cpa.3160440402}
  {\path{doi:10.1002/cpa.3160440402}}.

\bibitem[Cor96]{corrias_legendre}
L.~Corrias, \emph{Fast {L}egendre--{F}enchel transform and applications to
  {H}amilton--{J}acobi equations and conservation laws}, SIAM Journal on
  Numerical Analysis \textbf{33} (1996), no.~4, 1534--1558, \href
  {http://dx.doi.org/10.1137/S0036142993260208}
  {\path{doi:10.1137/S0036142993260208}}.

\bibitem[Cut13]{cuturi_sinkhorn}
Marco Cuturi, \emph{Sinkhorn distances: Lightspeed computation of optimal
  transport}, Proceedings of the 26th International Conference on Neural
  Information Processing Systems - Volume 2 (USA), NIPS'13, Curran Associates
  Inc., 2013, pp.~2292--2300,
  \url{http://dl.acm.org/citation.cfm?id=2999792.2999868}.

\bibitem[CWVB09]{cwbv_mk_gd}
Rick Chartrand, Brendt Wohlberg, Kevin~R. Vixie, and Erik~M. Bollt, \emph{A
  gradient descent solution to the {M}onge--{K}antorovich problem}, Appl. Math.
  Sci. (Ruse) \textbf{3} (2009), no.~21-24, 1071--1080.

\bibitem[Fry93]{fry}
David~Spotts Fry, \emph{Shape recognition using metrics on the space of
  shapes}, Ph.D. thesis, USA, 1993.

\bibitem[Gan94]{gangbo_elementary_polar}
Wilfrid Gangbo, \emph{An elementary proof of the polar factorization of
  vector-valued functions}, Arch. Rational Mech. Anal. \textbf{128} (1994),
  no.~4, 381--399, \href {http://dx.doi.org/10.1007/BF00387715}
  {\path{doi:10.1007/BF00387715}}.

\bibitem[Gan95]{gangbo_habilitation}
Wilfrid Gangbo, \emph{Quelques probl{\`e}mes d'analyse non convexe.
  {H}abilitation {\`a} diriger des recherches en math{\'e}matiques.},
  Habilitation, Universit{\'e} de Metz, January 1995.

\bibitem[GM96]{GangboMcCann}
Wilfrid Gangbo and Robert~J. McCann, \emph{The geometry of optimal
  transportation}, Acta Math. \textbf{177} (1996), no.~2, 113--161, \href
  {http://dx.doi.org/10.1007/BF02392620} {\path{doi:10.1007/BF02392620}}.

\bibitem[GM18]{GallouetMerigot}
Thomas~O. Gallou\"{e}t and Quentin M\'{e}rigot, \emph{A {L}agrangian scheme \`a
  la {B}renier for the incompressible {E}uler equations}, Found. Comput. Math.
  \textbf{18} (2018), no.~4, 835--865, \href
  {http://dx.doi.org/10.1007/s10208-017-9355-y}
  {\path{doi:10.1007/s10208-017-9355-y}},
  \url{https://doi.org/10.1007/s10208-017-9355-y}.

\bibitem[HMC06]{HMC_MFG}
Minyi Huang, Roland~P. Malham\'{e}, and Peter~E. Caines, \emph{Large population
  stochastic dynamic games: closed-loop {M}c{K}ean--{V}lasov systems and the
  {N}ash certainty equivalence principle}, Commun. Inf. Syst. \textbf{6}
  (2006), no.~3, 221--251,
  \url{http://projecteuclid.org/euclid.cis/1183728987}.

\bibitem[HZTA04]{HZTA}
Steven Haker, Lei Zhu, Allen Tannenbaum, and Sigurd Angenent, \emph{Optimal
  mass transport for registration and warping}, International Journal of
  computer vision \textbf{60} (2004), no.~3, 225--240.

\bibitem[JKM19]{jacobs_kim_meszaros}
Matt {Jacobs}, Inwon {Kim}, and Alp{\'a}r~R. {M{\'e}sz{\'a}ros}, \emph{Weak
  solutions to the {M}uskat problem with surface tension via optimal
  transport}, arXiv e-prints (2019), \href {http://arxiv.org/abs/1905.05370}
  {\path{arXiv:1905.05370}}.

\bibitem[JKO98]{jko98}
Richard Jordan, David Kinderlehrer, and Felix Otto, \emph{The variational
  formulation of the {F}okker--{P}lanck equation}, SIAM J. Math. Anal.
  \textbf{29} (1998), no.~1, 1--17, \href
  {http://dx.doi.org/10.1137/S0036141096303359}
  {\path{doi:10.1137/S0036141096303359}}.

\bibitem[LL07]{Lasry_Lions_MFG}
Jean-Michel Lasry and Pierre-Louis Lions, \emph{Mean field games}, Jpn. J.
  Math. \textbf{2} (2007), no.~1, 229--260, \href
  {http://dx.doi.org/10.1007/s11537-007-0657-8}
  {\path{doi:10.1007/s11537-007-0657-8}}.

\bibitem[LR17]{LindseyRubinstein}
Michael Lindsey and Yanir~A. Rubinstein, \emph{Optimal transport via a
  {M}onge--{A}mp\`ere optimization problem}, SIAM J. Math. Anal. \textbf{49}
  (2017), no.~4, 3073--3124, \href {http://dx.doi.org/10.1137/16M1071560}
  {\path{doi:10.1137/16M1071560}}, \url{https://doi.org/10.1137/16M1071560}.

\bibitem[Luc97]{lucet_legendre}
Yves Lucet, \emph{Faster than the fast {L}egendre transform, the linear-time
  {L}egendre transform}, Numerical Algorithms \textbf{16} (1997), no.~2,
  171--185, \href {http://dx.doi.org/10.1023/A:1019191114493}
  {\path{doi:10.1023/A:1019191114493}}.

\bibitem[Lé15]{Levy3D}
Bruno Lévy, \emph{A numerical algorithm for {$L_2$} semi-discrete optimal
  transport in 3{D}}, ESAIM Math. Model. Numer. Anal. \textbf{49} (2015),
  no.~6, 1693--1715, \href {http://dx.doi.org/10.1051/m2an/2015055}
  {\path{doi:10.1051/m2an/2015055}},
  \url{https://doi.org/10.1051/m2an/2015055}.

\bibitem[McC97]{McCann_interpolation}
Robert~J. McCann, \emph{A convexity principle for interacting gases}, Adv.
  Math. \textbf{128} (1997), no.~1, 153--179, \href
  {http://dx.doi.org/10.1006/aima.1997.1634}
  {\path{doi:10.1006/aima.1997.1634}}.

\bibitem[MM16]{MerigotMirebeau}
Quentin M\'{e}rigot and Jean-Marie Mirebeau, \emph{Minimal geodesics along
  volume-preserving maps, through semidiscrete optimal transport}, SIAM J.
  Numer. Anal. \textbf{54} (2016), no.~6, 3465--3492, \href
  {http://dx.doi.org/10.1137/15M1017235} {\path{doi:10.1137/15M1017235}},
  \url{https://doi.org/10.1137/15M1017235}.

\bibitem[Mé11]{Merigot2011}
Quentin Mérigot, \emph{A multiscale approach to optimal transport}, Computer
  Graphics Forum, vol.~30, Wiley Online Library, 2011, pp.~1583--1592.

\bibitem[Nes04]{Nesterov_book}
Yurii Nesterov, \emph{Introductory lectures on convex optimization}, Applied
  Optimization, vol.~87, Kluwer Academic Publishers, Boston, MA, 2004, A basic
  course, \href {http://dx.doi.org/10.1007/978-1-4419-8853-9}
  {\path{doi:10.1007/978-1-4419-8853-9}}.

\bibitem[Ott01]{otto_porous_media}
Felix Otto, \emph{The geometry of dissipative evolution equations: the porous
  medium equation}, Comm. Partial Differential Equations \textbf{26} (2001),
  no.~1-2, 101--174, \href {http://dx.doi.org/10.1081/PDE-100002243}
  {\path{doi:10.1081/PDE-100002243}}.

\bibitem[PBtTB{\etalchar{+}}15]{Prins2015}
C.~R. Prins, R.~Beltman, J.~H.~M. ten Thije~Boonkkamp, W.~L. Ijzerman, and
  T.~W. Tukker, \emph{A least-squares method for optimal transport using the
  {M}onge--{A}mp\`ere equation}, SIAM J. Sci. Comput. \textbf{37} (2015),
  no.~6, B937--B961, \href {http://dx.doi.org/10.1137/140986414}
  {\path{doi:10.1137/140986414}}, \url{https://doi.org/10.1137/140986414}.

\bibitem[San15]{santambrogio_otam}
Filippo Santambrogio, \emph{Optimal transport for applied mathematicians},
  Progress in Nonlinear Differential Equations and their Applications, vol.~87,
  Birkh\"{a}user/Springer, Cham, 2015, Calculus of variations, PDEs, and
  modeling, \href {http://dx.doi.org/10.1007/978-3-319-20828-2}
  {\path{doi:10.1007/978-3-319-20828-2}}.

\bibitem[Vil03]{VillaniBook1}
C{\'e}dric Villani, \emph{Topics in optimal transportation}, Graduate Studies
  in Mathematics, vol.~58, American Mathematical Society, Providence, RI, 2003,
  \href {http://dx.doi.org/10.1007/b12016} {\path{doi:10.1007/b12016}}.

\end{thebibliography}

\end{document}